\definecolor{codegreen}{rgb}{0,0.6,0}
\definecolor{codegray}{rgb}{0.5,0.5,0.5}
\definecolor{codepurple}{rgb}{0.58,0,0.82}
\definecolor{backcolour}{rgb}{0.95,0.95,0.92}
\lstdefinestyle{mystyle}{
    xleftmargin={2.5em},
    backgroundcolor=\color{backcolour},
    commentstyle=\color{codegreen},
    keywordstyle=\color{magenta},
    numberstyle=\tiny\color{codegray},
    stringstyle=\color{codepurple},
    basicstyle=\ttfamily\footnotesize,
    breakatwhitespace=false,
    breaklines=true,
    captionpos=b,
    keepspaces=true,
    numbers=left,
    numbersep=5pt,
    showspaces=false,
    showstringspaces=false,
    showtabs=false,
    tabsize=2
}
\newtheorem{thm}{Theorem}
\numberwithin{thm}{section}
\newtheorem{prop}[thm]{Proposition}
\newtheorem{cnj}[thm]{Conjecture}
\theoremstyle{definition}
\newtheorem{cns}[thm]{Construction}
\newtheorem{exm}[thm]{Example}
\newtheorem{defn}[thm]{Definition}
\newtheorem{notn}[thm]{Notation}
\newtheorem{rmk}[thm]{Remark}
\newtheorem{warn}[thm]{Warning}
\newcommand{\bb}{\mathbb}
\newcommand{\s}{\mathscr}
\newcommand{\mc}{\mathcal}
\newcommand{\mcE}{\mc{E}}
\newcommand{\mcM}{\mc{M}}
\newcommand{\mcO}{\mc{O}}
\newcommand{\mcV}{\mc{V}}
\newcommand{\mcW}{\mc{W}}
\newcommand{\mcX}{\mc{X}}
\newcommand{\mcY}{\mc{Y}}
\newcommand{\mcZ}{\mc{Z}}
\newcommand{\sB}{\s{B}}
\newcommand{\sF}{\s{F}}
\newcommand{\sG}{\s{G}}
\newcommand{\sM}{\s{M}}
\newcommand{\sV}{\s{V}}
\newcommand{\bA}{\bb{A}}
\newcommand{\bC}{\bb{C}}
\newcommand{\bP}{\bb{P}}
\newcommand{\bV}{\bb{V}}
\newcommand{\bZ}{\bb{Z}}
\newcommand{\wh}{\widehat}
\newcommand{\br}[1]{\mleft( #1 \mright)}
\newcommand{\set}[2][]{
  \ifthenelse{\equal{#1}{}}{
    \mleft\{ #2 \mright\}
  }{
    \mleft\{ #1\ :\ #2 \mright\}
  }
}
\renewcommand{\to}[1][]{
  \ifthenelse{\equal{#1}{}}{
    \longrightarrow
  }{
    \stackrel{#1}{\longrightarrow}
  }
}
\newcommand{\To}[1][]{
  \ifthenelse{\equal{#1}{}}{
    \Longrightarrow
  }{
    \stackrel{#1}{\Longrightarrow}
  }
}
\renewcommand{\mapsto}[1][]{
    \ifthenelse{\equal{#1}{}}{
      \longmapsto
    }{
      \stackrel{#1}{\longmapsto}
    }
}
\newcommand{\ot}[1][]{
  \ifthenelse{\equal{#1}{}}{
  \longleftarrow
  }{
    \stackrel{#1}{\longleftarrow}
  }
}
\newcommand{\ol}{\overline}
\newcommand{\ul}{\underline}
\newcommand{\im}{\mathrm{im}\xspace}
\newcommand{\Gl}{\mathrm{Gl}}
\newcommand{\hto}[1][]{\stackrel{#1}{\longhookrightarrow}}
\newcommand{\id}{\mathrm{id}}
\renewcommand{\Ref}[2][]{\ifthenelse{\equal{#1}{}}{\ref{#2}}
                      {\hyperref[#2]{\ref*{#1}(\ref*{#2})}}}
\newcommand{\Aref}[2][]{\ifthenelse{\equal{#1}{}}{\autoref{#2}}
                      {\hyperref[#2]{\autoref*{#1}\ref*{#2}}}}
\newcommand{\Sref}[1]{\hyperref[#1]{\S \ref*{#1}}}
\newcommand{\dom}{\mathrm{dom}}
\newcommand{\Set}{\mathrm{Set}}
\newcommand{\Grpd}{\mathrm{Grpd}}
\newcommand{\op}{\mathrm{op}}
\newcommand{\PSh}{\mathrm{PSh}}
\newcommand{\Sh}{\mathrm{Sh}}
\newcommand{\Fun}{\mathrm{Fun}}
\newcommand{\Hom}{\mathrm{Hom}}
\newcommand{\Nat}{\mathrm{Nat}}
\newcommand{\HHom}{\mathcal{H}\mathrm{om}}
\newcommand{\Map}{\mathrm{Map}}
\newcommand{\Sc}[1][]{\ifthenelse{\equal{#1}{}}{\mathrm{Sch}}{\mathrm{Sch}_{/#1}}}
\newcommand{\Spec}{\mathrm{Spec}}
\newcommand{\Spf}{\mathrm{Spf}}
\newcommand{\Cat}{\s{C}\mathrm{at}}
\newcommand{\QCoh}{\mathrm{QCoh}}
\newcommand{\Vect}{\mathrm{Vect}}
\newcommand{\Perf}{\mathrm{Perf}}
\newcommand{\Sym}{\mathrm{Sym}\xspace}
\newcommand{\St}{\mathrm{St}}
\newcommand{\PSt}{\mathrm{PSt}}
\newcommand{\AlgSt}{\mathrm{AlgSt}}
\newcommand{\Sch}{\mathrm{Sch}}
\newcommand{\FSch}{\mathbb{F}\mathrm{Sch}}
\newcommand{\Mod}{\mathrm{Mod}}
\newcommand{\LMod}{\mathrm{LMod}}
\newcommand{\RMod}{\mathrm{RMod}}
\newcommand{\CAlg}{\mathrm{cAlg}}
\newcommand{\CCAlg}{\mathrm{c}\mathcal{A}\mathrm{lg}}
\newcommand{\Ob}[1]{\mathrm{Ob}\br{#1}}
\newcommand{\Aff}{\mathrm{Aff}}
\newcommand{\stb}{\mathrm{stb}}
\title[Moduli stacks of quiver connections]
      {Moduli stacks of quiver connections and non-Abelian Hodge theory}
\author{Mahmud Azam}
\email{mahmud.azam@usask.ca}
\author{Steven Rayan}
\email{rayan@math.usask.ca}
\address{Centre for Quantum Topology and Its Applications (quanTA) and
Department of Mathematics and Statistics, University of Saskatchewan, SK,
Canada~ S7N 5E6}
\begin{document}

\begin{abstract}
In \cite{ModQuivBun}, a moduli stack parametrizing  $I$--indexed diagrams of Higgs bundles over a base stack $X$ was constructed for any finite simplicial set $I$, inspiring speculations about extending the non-Abelian Hodge correspondence to these moduli stacks. In the present work, we formalize the de Rham side of this conjectural extension. We construct moduli stacks parametrizing diagrams of bundles with $\lambda$--connections over a base prestack $X$, where $\lambda$ can be a fixed number or a parameter. Taking $\lambda$ to be $1$ gives a moduli
stack parametrizing diagrams of bundles with connection, while taking it to be a
parameter gives a version of Simpson's non-Abelian Hodge filtration for digrams
of bundles with connection. We show that when $X$ is a smooth and projective
scheme over an algebraically closed field $k$ of characteristic $0$, these
moduli stacks are algebraic and locally of finite presentation, and have affine
diagonal.
\end{abstract}

\maketitle

\tableofcontents


\section{Introduction}

\subsection{Motivation}

Representations of the fundamental group of various algebraic varieties make
prominent appearances in various areas of mathematics and physics. A few
examples are in order:

\begin{itemize}
\item We recall that the fundamental group of the configuration space of $n$
points on the plane is the Artin braid group. Representations of this
group characterize solutions to the quantum Yang-Baxter equations
\cite{BirmanKnotThry} and they govern the statistics of anyon exchanges
\cite{GoldinAnyons}. This makes them important in topological quantum computing
\cite{KauffmanBraidRepQGates} in the sense of \cite{TopPhasesQC}.

\item Some recent advances in condensed matter physics, namely hyperbolic band
theory \cites{HyperbolicBandTheory, AutBlochTheorems, HypBloch}, are centred around
representations of some choice of discrete translation group acting on the
hyperbolic plane. In turn, these translation groups are the fundamental groups
of the Riemann surfaces obtained by quotienting the hyperbolic plane by the
translation action.

\item A large class of $d = 4, n = 2$ field theories called ``class S''
\cite{ClassS} obtained by a specific compactification method applied to
$d = 6, N = (2, 0)$ superconformal field theories, are controlled by
choices of representations of the fundmanetal group of the surface
parametrizing the ``compactified'' dimensions.
\end{itemize}

A series of equivalences, then, allows us to address representations of
fundamental groups in terms of vector bundles and connections as we briefly
discuss now. There is a well-known equivalence of categories between the
category of representations of the fundamental group of a ``nice enough''
topological space, such as a smooth manifold, and the category of locally
constant sheaves over the space. This equivalence restricts to an equivalence
between linear representations of the fundamental group on one side and locally
constant sheaves of vector spaces on the other. Then, the Riemann-Hilbert
correspondence (see, for example, \cite[35]{LanglandsIntro}) gives an
equivalence of categories between the category of locally constant sheaves and
the category of vector bundles with flat connections. On the other hand, there
is an equivalence between the category of vector bundles with flat connections
on a base complex manifold and the category of what are called Higgs bundles on
the base. This was first shown by Hitchin in \cite{SelfDualityEqn} as an
isomorphism of the moduli spaces of the respective objects when the base is a
curve, and was a generalization of the Narasimhan-Seshadri theorem. It was then
generalized to arbitrary compact K\"ahler manifolds by Simpson in
\cite{HiggsLocSys}. This equivalence is called the Corelette-Simpson
correspondence or the non-Abelian Hodge correspondence. For convenience, we will
call the study of these equivalences as non-Abelian Hodge theory, even though
the term is generally used only in association with the Corlette-Simpson
correspondence.

We can thus argue that
understanding categories and moduli spaces of flat connections and those of
Higgs bundles are of importance simultaneously to geometry, physics and
quantum computing. This work is a contribution in this general endeavour in
that it is a first step in unifying the moduli theoretic and the
categorical perspectives of the above discussed correspondences in a very
specific manner.
The moduli spaces allow us to vary the objects in some geometric way, but one
should be able to do the same with the morphisms, and the categorical
equivalences should be ``geometric'' in some good sense. Once we try to make
this idea precise, we arrive at the following situation:
we should have moduli spaces of objects and moduli spaces of morphisms
forming category objects internal to some geometric category, and the
equivalences should be equivalences of internal categories in the geometric
category, instead of isomorphisms of moduli spaces and equivalences of
categories running in parallel, apparently unaware of each other.
We can begin to visualize the situation as follows:
\begin{figure}[H]
\centering
\tikzset{every picture/.style={line width=0.75pt}} 

\begin{tikzpicture}[x=0.75pt,y=0.75pt,yscale=-1,xscale=1]

\draw    (100,123) .. controls (166.25,172) and (30.25,200) .. (129.25,239) ;
\draw    (233,133) .. controls (299.25,182) and (262,212) .. (261,249) ;
\draw    (100,123) -- (231.01,132.85) ;
\draw [shift={(233,133)}, rotate = 184.3] [color={rgb, 255:red, 0; green, 0; blue, 0 }  ][line width=0.75]    (10.93,-3.29) .. controls (6.95,-1.4) and (3.31,-0.3) .. (0,0) .. controls (3.31,0.3) and (6.95,1.4) .. (10.93,3.29)   ;
\draw    (106,180) -- (270.42,193.09) ;
\draw [shift={(272.42,193.25)}, rotate = 184.55] [color={rgb, 255:red, 0; green, 0; blue, 0 }  ][line width=0.75]    (10.93,-3.29) .. controls (6.95,-1.4) and (3.31,-0.3) .. (0,0) .. controls (3.31,0.3) and (6.95,1.4) .. (10.93,3.29)   ;
\draw    (119,156) -- (264.26,167.84) ;
\draw [shift={(266.25,168)}, rotate = 184.66] [color={rgb, 255:red, 0; green, 0; blue, 0 }  ][line width=0.75]    (10.93,-3.29) .. controls (6.95,-1.4) and (3.31,-0.3) .. (0,0) .. controls (3.31,0.3) and (6.95,1.4) .. (10.93,3.29)   ;
\draw    (117,140) -- (251.26,150.84) ;
\draw [shift={(253.25,151)}, rotate = 184.62] [color={rgb, 255:red, 0; green, 0; blue, 0 }  ][line width=0.75]    (10.93,-3.29) .. controls (6.95,-1.4) and (3.31,-0.3) .. (0,0) .. controls (3.31,0.3) and (6.95,1.4) .. (10.93,3.29)   ;
\draw    (93,199) -- (266.09,214.08) ;
\draw [shift={(268.08,214.25)}, rotate = 184.98] [color={rgb, 255:red, 0; green, 0; blue, 0 }  ][line width=0.75]    (10.93,-3.29) .. controls (6.95,-1.4) and (3.31,-0.3) .. (0,0) .. controls (3.31,0.3) and (6.95,1.4) .. (10.93,3.29)   ;
\draw    (99,221) -- (262.26,234.83) ;
\draw [shift={(264.25,235)}, rotate = 184.84] [color={rgb, 255:red, 0; green, 0; blue, 0 }  ][line width=0.75]    (10.93,-3.29) .. controls (6.95,-1.4) and (3.31,-0.3) .. (0,0) .. controls (3.31,0.3) and (6.95,1.4) .. (10.93,3.29)   ;
\draw    (128,239) -- (259.01,248.85) ;
\draw [shift={(261,249)}, rotate = 184.3] [color={rgb, 255:red, 0; green, 0; blue, 0 }  ][line width=0.75]    (10.93,-3.29) .. controls (6.95,-1.4) and (3.31,-0.3) .. (0,0) .. controls (3.31,0.3) and (6.95,1.4) .. (10.93,3.29)   ;

\draw (76.92,159.07) node [anchor=north west][inner sep=0.75pt]    {$x( t)$};
\draw (279.58,172.07) node [anchor=north west][inner sep=0.75pt]    {$y( t)$};
\draw (158.25,101.07) node [anchor=north west][inner sep=0.75pt]    {$f( t)$};
\end{tikzpicture}
\end{figure}
\noindent Here, $x(t)$ and $y(t)$ are two paths in the moduli space of objects
and $f(t)$ is a path in the moduli space of morphisms, such that
for each $t$, $f(t)$ has source $x(t)$ and target $y(t)$. At this point,
it is imaginable to have not just moduli spaces of morphisms but those
parametrizing diagrams of any shape. We may draw the situation for a diagram
of the shape $\Delta^2$ --- the commuting triangle --- as:
\begin{figure}[H]
\centering

\tikzset{every picture/.style={line width=0.75pt}} 

\begin{tikzpicture}[x=0.75pt,y=0.75pt,yscale=-1,xscale=1]

\draw [color={rgb, 255:red, 126; green, 211; blue, 33 }  ,draw opacity=1 ]   (246,64) -- (338.23,112.32) ;
\draw [shift={(340,113.25)}, rotate = 207.65] [color={rgb, 255:red, 126; green, 211; blue, 33 }  ,draw opacity=1 ][line width=0.75]    (10.93,-3.29) .. controls (6.95,-1.4) and (3.31,-0.3) .. (0,0) .. controls (3.31,0.3) and (6.95,1.4) .. (10.93,3.29)   ;
\draw [color={rgb, 255:red, 126; green, 211; blue, 33 }  ,draw opacity=1 ]   (203,133.25) -- (338.02,113.54) ;
\draw [shift={(340,113.25)}, rotate = 171.69] [color={rgb, 255:red, 126; green, 211; blue, 33 }  ,draw opacity=1 ][line width=0.75]    (10.93,-3.29) .. controls (6.95,-1.4) and (3.31,-0.3) .. (0,0) .. controls (3.31,0.3) and (6.95,1.4) .. (10.93,3.29)   ;
\draw [color={rgb, 255:red, 126; green, 211; blue, 33 }  ,draw opacity=1 ]   (203,133.25) -- (243.96,65.96) ;
\draw [shift={(245,64.25)}, rotate = 121.33] [color={rgb, 255:red, 126; green, 211; blue, 33 }  ,draw opacity=1 ][line width=0.75]    (10.93,-3.29) .. controls (6.95,-1.4) and (3.31,-0.3) .. (0,0) .. controls (3.31,0.3) and (6.95,1.4) .. (10.93,3.29)   ;
\draw  [fill={rgb, 255:red, 0; green, 0; blue, 0 }  ,fill opacity=1 ] (336.25,113.25) .. controls (336.25,111.18) and (337.93,109.5) .. (340,109.5) .. controls (342.07,109.5) and (343.75,111.18) .. (343.75,113.25) .. controls (343.75,115.32) and (342.07,117) .. (340,117) .. controls (337.93,117) and (336.25,115.32) .. (336.25,113.25) -- cycle ;
\draw  [fill={rgb, 255:red, 0; green, 0; blue, 0 }  ,fill opacity=1 ] (241.25,64.25) .. controls (241.25,62.18) and (242.93,60.5) .. (245,60.5) .. controls (247.07,60.5) and (248.75,62.18) .. (248.75,64.25) .. controls (248.75,66.32) and (247.07,68) .. (245,68) .. controls (242.93,68) and (241.25,66.32) .. (241.25,64.25) -- cycle ;
\draw  [fill={rgb, 255:red, 0; green, 0; blue, 0 }  ,fill opacity=1 ] (199.25,133.25) .. controls (199.25,131.18) and (200.93,129.5) .. (203,129.5) .. controls (205.07,129.5) and (206.75,131.18) .. (206.75,133.25) .. controls (206.75,135.32) and (205.07,137) .. (203,137) .. controls (200.93,137) and (199.25,135.32) .. (199.25,133.25) -- cycle ;
\draw [color={rgb, 255:red, 126; green, 211; blue, 33 }  ,draw opacity=1 ]   (225,154) -- (317.23,202.32) ;
\draw [shift={(319,203.25)}, rotate = 207.65] [color={rgb, 255:red, 126; green, 211; blue, 33 }  ,draw opacity=1 ][line width=0.75]    (10.93,-3.29) .. controls (6.95,-1.4) and (3.31,-0.3) .. (0,0) .. controls (3.31,0.3) and (6.95,1.4) .. (10.93,3.29)   ;
\draw [color={rgb, 255:red, 126; green, 211; blue, 33 }  ,draw opacity=1 ]   (182,223.25) -- (317.02,203.54) ;
\draw [shift={(319,203.25)}, rotate = 171.69] [color={rgb, 255:red, 126; green, 211; blue, 33 }  ,draw opacity=1 ][line width=0.75]    (10.93,-3.29) .. controls (6.95,-1.4) and (3.31,-0.3) .. (0,0) .. controls (3.31,0.3) and (6.95,1.4) .. (10.93,3.29)   ;
\draw [color={rgb, 255:red, 126; green, 211; blue, 33 }  ,draw opacity=1 ]   (182,223.25) -- (222.96,155.96) ;
\draw [shift={(224,154.25)}, rotate = 121.33] [color={rgb, 255:red, 126; green, 211; blue, 33 }  ,draw opacity=1 ][line width=0.75]    (10.93,-3.29) .. controls (6.95,-1.4) and (3.31,-0.3) .. (0,0) .. controls (3.31,0.3) and (6.95,1.4) .. (10.93,3.29)   ;
\draw  [fill={rgb, 255:red, 0; green, 0; blue, 0 }  ,fill opacity=1 ] (315.25,203.25) .. controls (315.25,201.18) and (316.93,199.5) .. (319,199.5) .. controls (321.07,199.5) and (322.75,201.18) .. (322.75,203.25) .. controls (322.75,205.32) and (321.07,207) .. (319,207) .. controls (316.93,207) and (315.25,205.32) .. (315.25,203.25) -- cycle ;
\draw  [fill={rgb, 255:red, 0; green, 0; blue, 0 }  ,fill opacity=1 ] (220.25,154.25) .. controls (220.25,152.18) and (221.93,150.5) .. (224,150.5) .. controls (226.07,150.5) and (227.75,152.18) .. (227.75,154.25) .. controls (227.75,156.32) and (226.07,158) .. (224,158) .. controls (221.93,158) and (220.25,156.32) .. (220.25,154.25) -- cycle ;
\draw  [fill={rgb, 255:red, 0; green, 0; blue, 0 }  ,fill opacity=1 ] (178.25,223.25) .. controls (178.25,221.18) and (179.93,219.5) .. (182,219.5) .. controls (184.07,219.5) and (185.75,221.18) .. (185.75,223.25) .. controls (185.75,225.32) and (184.07,227) .. (182,227) .. controls (179.93,227) and (178.25,225.32) .. (178.25,223.25) -- cycle ;
\draw    (228,294.25) .. controls (256,231.25) and (119.25,263) .. (203,133.25) ;
\draw [color={rgb, 255:red, 126; green, 211; blue, 33 }  ,draw opacity=1 ]   (271,225) -- (363.23,273.32) ;
\draw [shift={(365,274.25)}, rotate = 207.65] [color={rgb, 255:red, 126; green, 211; blue, 33 }  ,draw opacity=1 ][line width=0.75]    (10.93,-3.29) .. controls (6.95,-1.4) and (3.31,-0.3) .. (0,0) .. controls (3.31,0.3) and (6.95,1.4) .. (10.93,3.29)   ;
\draw [color={rgb, 255:red, 126; green, 211; blue, 33 }  ,draw opacity=1 ]   (228,294.25) -- (363.02,274.54) ;
\draw [shift={(365,274.25)}, rotate = 171.69] [color={rgb, 255:red, 126; green, 211; blue, 33 }  ,draw opacity=1 ][line width=0.75]    (10.93,-3.29) .. controls (6.95,-1.4) and (3.31,-0.3) .. (0,0) .. controls (3.31,0.3) and (6.95,1.4) .. (10.93,3.29)   ;
\draw [color={rgb, 255:red, 126; green, 211; blue, 33 }  ,draw opacity=1 ]   (228,294.25) -- (268.96,226.96) ;
\draw [shift={(270,225.25)}, rotate = 121.33] [color={rgb, 255:red, 126; green, 211; blue, 33 }  ,draw opacity=1 ][line width=0.75]    (10.93,-3.29) .. controls (6.95,-1.4) and (3.31,-0.3) .. (0,0) .. controls (3.31,0.3) and (6.95,1.4) .. (10.93,3.29)   ;
\draw  [fill={rgb, 255:red, 0; green, 0; blue, 0 }  ,fill opacity=1 ] (361.25,274.25) .. controls (361.25,272.18) and (362.93,270.5) .. (365,270.5) .. controls (367.07,270.5) and (368.75,272.18) .. (368.75,274.25) .. controls (368.75,276.32) and (367.07,278) .. (365,278) .. controls (362.93,278) and (361.25,276.32) .. (361.25,274.25) -- cycle ;
\draw  [fill={rgb, 255:red, 0; green, 0; blue, 0 }  ,fill opacity=1 ] (266.25,225.25) .. controls (266.25,223.18) and (267.93,221.5) .. (270,221.5) .. controls (272.07,221.5) and (273.75,223.18) .. (273.75,225.25) .. controls (273.75,227.32) and (272.07,229) .. (270,229) .. controls (267.93,229) and (266.25,227.32) .. (266.25,225.25) -- cycle ;
\draw  [fill={rgb, 255:red, 0; green, 0; blue, 0 }  ,fill opacity=1 ] (224.25,294.25) .. controls (224.25,292.18) and (225.93,290.5) .. (228,290.5) .. controls (230.07,290.5) and (231.75,292.18) .. (231.75,294.25) .. controls (231.75,296.32) and (230.07,298) .. (228,298) .. controls (225.93,298) and (224.25,296.32) .. (224.25,294.25) -- cycle ;
\draw    (366,274.25) .. controls (394,211.25) and (257.25,243) .. (341,113.25) ;
\draw    (270,225.25) .. controls (298,162.25) and (177.25,232) .. (245,64.25) ;

\end{tikzpicture}
\end{figure}

In this work, we make these ideas precise: we construct and establish
the geometricity of the respective moduli spaces of diagrams of vector
bundles with connection. Previously, we constructed and studied similar moduli
spaces of diagrams of vector bundles and Higgs bundles \cite{ModQuivBun}.
In combination with the current work, this gives a unification of the moduli
theory and the category theory of all sides involved in non-Abelian Hodge
theory.
The diagrams we are considering can be thought of as representations of quivers
in the sense of \cite{QuivRepKirillov} but in the categories of vector bundles,
Higgs bundles, connections or $\lambda$--connections over a fixed base,
as opposed to the category of vector spaces --- hence, the term ``quiver
bundle''.
At the same time, we provide ``face'' and ``degeneracy'' maps, in the same sense
as in the context of simplicial sets. That is, a face map sends a diagram to one
of its lower dimensional faces, while a degeneracy map sends a diagram to a
higher dimensional diagram obtained by adding identity edges. The collection
of these results should be thought of as the beginnings of a form of
categorification of non-Abelian Hodge theory.

\subsection{Overview and Main Results}

We set up the basic notation and conventions, and also prove some basic results
about prestacks and stacks in \cref{sec:Prelim}. These should be well-known but
are difficult to find explicit descriptions of in the literature.

In \cref{sec:RelSpec}, we will give a detailed description of the relative
spectrum construction for prestacks, and show that it is given by a certain
Grothendieck construction or unstraightening.  Again, it is difficult to find
this exact description in the literature but will be a technical requirement for
proving our main results in the next sections.

In \cref{sec:ArrowBun}, For a prestack $\mcY$ over $S$, we define a prestack
$\sM_1(\mcY)$ whose objects over an $S$--scheme $U$ are triples $(E, F, s)$
where $E, F$ are vector bundles, or equivalently, finite locally free sheaves on
$U \times_S \mcY$ and $s : E \to F$ is a morphism of vector bundles. This
already appeared in our previous work \cite{ModQuivBun} but in the present
paper, we give a much more concrete description of this prestack, which will be
necessary for proving our main results. We call this prestack the moduli
prestack of arrow bundles on $\mcY$. As shown in \cite{ModQuivBun}, it follows
from our definition that when $\mcY$ is a stack, so is $\sM_1(\mcY)$, and when
$\mcY$ is algebraic and satisfies some mild conditions, then $\sM_1(\mcY)$ is
also algebraic and satisfies similar conditions.

Our main results are in \cref{sec:ArrowBunFormalGrpd}.  Here, for a scheme $X$
over $S$, we consider various formal groupoid structures
$\sF \rightrightarrows X$ on $X$ and the moduli stack $\sM_1(X_\sF)$ of arrow
bundles on $X_\sF$ --- where $X_\sF$ is the quotient stack of the formal
groupoid $(X, \sF)$. We show that the objects of this moduli stack over an
$S$--scheme $U$ are triples $(E, \phi, F, \psi, s)$, where $E, F$ are vector
bundles on $U \times_S X$; $\phi, \psi$ are $\lambda$--connections on $E, F$
respectively --- with $\lambda$ being either $0, 1$ or a morphism
$\lambda : U \to \bA^1_k$, depending on the choice of $\sF$ --- and
$s : E \to F$ is a morphism of $\lambda$--connections, in the sense that it is a
morphism of vector bundles making a certain square involving $\phi$ and $\psi$
commute.  Strictly speaking, we will be dealing with an equivalent formulation
of connections: a module over a sheaf of rings of differential operators
\cite[\S 2]{ModRepFunGrpI}.  We show that as long as the moduli stack $\sM_1(X)$
of arrow bundles on $X$, the moduli stack $\sM(X)$ of vector bundles on $X$ and
the moduli stacks $\sM(X_\sF)$ of vector bundles on $X_\sF$ constructed in
\cite[31---33]{NonAbHodgeFilt} are algebraic, then so is $\sM_1(X_\sF)$ --- see
\cref{thm:mod-st-arr-bun-formal-grpd-alg}.  Furthermore, in the case that $X$ is
a smooth and projective scheme, $\sM(X), \sM_1(X)$ are known to be algebraic,
locally of finite presentation and possessing affine diagonal
\cites{Wang-BunG, ModQuivBun}, and we show that $\sM_1(X_\sF)$ inherits this
good behaviour --- see \cref{thm:mod1-Lambda-alg-aff-diag-loc-fp} and
\cref{thm:mod-st-conn-alg-lfp-aff-diag}. Of course, taking $\lambda$ to be
$0$, $1$ and a $k$--valued parameter respectively, this gives a certain
``categorification'' of the moduli stacks of Higgs bundles and connections, and
of the non-Abelian Hodge filtration from \cite[31---33]{NonAbHodgeFilt}.

At the same time, there are moduli stacks $\sM_I(X_\sF)$ parametrizing
$I$--indexed diagrams of $\lambda$--connections over a scheme $X$ --- these are
just the moduli stacks $\mcM_{\Vect(X_\sF), I}$ of $I$--shaped quiver bundles
$X_\sF$ constructed in \cite[\S 4]{ModQuivBun}.  Importantly, this construction
is contravariantly functorial in $I$, by definition, so that we get ``face'' and
``degeneracy'' maps $\sM_{\Delta^i}(X_\sF) \to \sM_{\Delta^j}(X_\sF)$
corresponding to simplicial maps $\Delta^j \to \Delta^i$ as would be expected
from a categorification.  We discuss this in \cref{subsec:QuivBun} and show
that, when $X$ is smooth and projective, for any finite simplicial set $I$, the
moduli stacks $\sM_I(X_\sF)$ are algebraic, locally of finite presentation and
have affine diagonal. This is \cref{thm:mod-quiv-bun-formal-grpd-loc-fin-pre}.
Again, by varying $\sF$, we get the corresponding moduli stacks for Higgs
bundles, connections, or the non-Abelian Hodge filtration. In
\cref{subsec:NonAbHodge}, we speculate about a version of the non-Abelian Hodge
correspondence in this categorified setting using the categorified non-Abelian
Hodge filtration.

\begin{rmk}\label{rmk:Porta-Sala}
We must note that derived enhancements of these moduli stacks can be obtained by
\cite[Proposition II.2.8(3)]{DPS25}. Then, \cite[Corollary 4.6]{PS23} combined
with \cite[Proposition 4.1.1 (3) and (4)]{PS20} yield the geometricity and local
finite presentation of the derived moduli stack on the de Rham side.
Similarly, \cite[Corollary 4.6]{PS23} and \cite[Proposition 5.3.1]{PS20} prove
the geometricity and local finite presentation of the derived moduli stack on
the Dolbeault side. Similar results should hold for the derived non-Abelian Hodge
filtration. While these results work in far greater generality and imply the algebraicity
of the (non-derived) moduli stacks we study in this paper, our constructions and methods
of proof yield some more geometric properties of
the relevant moduli stacks, such as affineness of the diagonal, under suitable hypotheses on
the base scheme.
\end{rmk}

\subsection*{Acknowledgements}

We would like to thank Gabriel Ribeiro and Qixiang Wang for several
formative conversations about the de Rham space of a scheme, as well as Dat Minh Ha
for many helpful conversations about relevant algebro-geometry constructions. We are
indebted to Carlos Simpson for engaging in discussion of some key ideas at the frontiers of non-abelian Hodge theory.
We are grateful to Mauro Porta and Francesco Sala for pointing us to their work on moduli stacks of
diagrams of sheaves on the de Rham, Dolbeault and Deligne shapes, and clarifying for us the connection of their work with ours. We are grateful to Toni Annala for suggesting the use of box tensor products of universal vector bundles in the present and previous work. We are especially indebted to Antoine Bourget for hosting the first-named author at CEA Saclay for a research stay in Autumn 2024 during which some of the starting ideas of this work were formed, as well as several useful conversations about quiver varieties. We are also grateful to Campus France and the Embassy of France in Canada for a High Level Scientific Fellowship that funded this stay. The second-named author is grateful to Tony Pantev for very useful discussions during a visit to the University of Pennsylvania in December 2025. Throughout this work, the first named-author was funded by a Canada Graduate Scholarship (Doctoral) from the Natural Sciences and Engineering Research Council of Canada (NSERC) and the second-named author was funded by an NSERC Discovery Grant.


\section{Preliminaries}\label{sec:Prelim}

\subsection{Category Theory Conventions}

For category theoretic notation, we try to stay as close to \cite{HTT} as
possible. For example, we write $\Delta^n$ for the poset of natural numbers
$\set{0 < \dots < n}$ which we consider as a category. We will also write
$\Delta^n$ to denote its nerve or the standard $n$--dimensional
simplicial set, when needed.
For two categories $C, D$, we write $\Fun(C, D)$ for the
category of functors $C \to D$, and $C^\simeq$ for the core or maximal
subgroupoid of $C$. For a category $C$ and two objects
$c, d \in C$, we will write $\Hom_C(c, d)$ or $C(c, d)$ to denote
the set of morphisms $c \to d$ in $C$.
We will write a category object internal to a category $C$
as a tuples $(O, M, s, t, c, i)$, where $O$ is the object of objects,
$M$ is the object of morphisms, $s, t, c, i$ are the source, target,
composition and identity maps respectively.

We will use the following notation for $2$--categorical notions.
Given a pair of objects $a, b$ and $1$--morphisms $f, g : a \to b$ in a
$2$--category, we will denote a $2$--morphism $\alpha$ from $f$ to $g$ as
$\alpha : f \To g : a \to b$. We will denote by $- \circ -$, the composition of
$1$--morphisms as usual but also vertical composition. We will denote
horizontal composition by $- \star -$. We will denote the identity $2$--morphism
of a $1$--morphism $f$ as $e_f$, but in the case that $f = \id_a$ for some
object, we will simply write $e_a$.

\subsection{Prestacks and Stacks}

Throughout this work, we will fix an algebraically closed field $k$.
Let $\Sch$ denote the $1$--category of schemes over $k$ and $\Aff$ denote the
full subcategory of affine schemes over $k$.
We will denote by $\PSt$ the $2$--category of
prestacks or categories fibred in groupoids over $\Aff$, and by
$\St$, the full sub-$2$-category thereof consisting of stacks with respect
to the \'etale topology.
We recall that \'etale stacks are also fppf stacks so that it
makes sense to speak of Artin \'etale stacks, which we will simply refer to as
algebraic stacks. We denote the $2$--category of Artin \'etale stacks
or algebraic stacks as $\AlgSt$ which is a full sub-$2$-category of the
$2$--category of all Artin fppf stacks over $k$.
For a scheme $S \in \Sch$, we will write $S$ for
both the scheme and its image in $\PSt$ or $\St$ under the fully faithful Yoneda
embedding $\Sch \to \St \to \PSt$ --- recall that the embedding factors through
$\St$ since the \'etale topology is subcanonical.
For most of this work, we will fix a scheme $S$ and work
in the slice $2$--categories $\PSt_{/S}$ and $\St_{/S}$.

Since the Yoneda embedding for stacks and the embedding of stacks in prestacks
preserves limits, we will simply write the word ``limit'' for both
the $1$--limit of schemes and the $2$--limit of (pre)stacks, and similarly for
fibre products. By ``colimit'' of prestacks, we will generally mean the
$2$--colimit. We denote the fibre product functor over a prestack $\mcW$ as
$- \times_\mcW -$, but $- \times -$, suppressing $\mcW$, when $\mcW = S$.
Given prestacks $\mcY_1, \dots, \mcY_n \in \PSt_{/S}$, we write
$pr_i : \mcY_1 \times \cdots \times \mcY_n \to \mcY_i$ to denote the
projection of the iterated fibre product onto the $i$--th factor.
We will write $\mcY^n$ for the $n$--fold fibre product of $\mcY$ with itself
over $S$ $n$ times.

\begin{notn}
For two prestacks $\mcX, \mcY \in \PSt_{/S}$, we will use the following
notation:
\begin{figure}[H]
\begin{tabularx}{\textwidth}{l c l}
$p_\mcX$ & : & the structure $1$--morphism $\mcX \to \Aff_{/S}$ \\
$\Map_S(\mcX, \mcY)$ & : & the mapping prestack relative to $S$ \\
$\Gl_{n}$ & : & the general linear group scheme over $k$ of degree $n$ \\
$\Gl_{n, S}$ & : & the fibre product $\Gl_{n} \times_k S$ \\
$\sM$ & : & the prestack $\coprod_{n = 0}^\infty B\Gl_{n, S}$ \\
$\sM(\mcX)$ & : & the prestack
  $\Map_S(\mcX, \sM) \cong \coprod_{n = 0}^\infty \Map_S(\mcX, B\Gl_{n, S})$ \\
$\sM^n(\mcX)$ & : & $\Map_S(\mcX, B\Gl_{n, S})$ \\
$\sB(\mcX)$ & : & the prestack $\sM(\mcX) \times_S \mcX$ \\
$\sB^n(\mcX)$ & : & $\sM^n(\mcX) \times_S \mcX$ \\
\end{tabularx}
\end{figure}
\end{notn}

\begin{prop}\label{prop:prest-prod-strict}
For a diagram $\mcY \to[p] S \ot[q] \mcZ$ in $\PSt$,
the underlying category of the fibre product $\mcY \times_S \mcZ$ is strictly
isomorphic to the strict fibre product of categories $\mcY \times_S^{str} \mcZ$.
\end{prop}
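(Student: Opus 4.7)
The plan is to unwind the two notions of fibre product side by side and exploit the fact that the scheme $S$, viewed as a prestack via the Yoneda embedding, is in fact a category fibred in \emph{sets} over $\Aff$: its objects are pairs $\br{T, f : T \to S}$, and given two such objects $\br{T, f}$ and $\br{T, f'}$ over the same $T \in \Aff$, there is a unique morphism over $\id_T$ if $f = f'$ and none otherwise. In particular, every fibre of the structure functor $S \to \Aff$ is a discrete set, and any morphism in $S$ over an identity in $\Aff$ is itself an identity.

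First I would write down the two descriptions of objects and morphisms explicitly. The 2-fibre product $\mcY \times_S \mcZ$ in $\PSt$ has, as objects, triples $\br{y, z, \alpha}$ with $y \in \mcY$ and $z \in \mcZ$ lying over the same $T \in \Aff$ and $\alpha$ an isomorphism $p(y) \to q(z)$ in $S$ over $\id_T$; morphisms $\br{y, z, \alpha} \to \br{y', z', \alpha'}$ are pairs $\br{\phi, \psi}$ of morphisms in $\mcY$ and $\mcZ$ lying over a common morphism in $\Aff$, such that the square with edges $\alpha, \alpha', p(\phi), q(\psi)$ commutes in $S$. The strict fibre product $\mcY \times_S^{str} \mcZ$ has, as objects, pairs $\br{y, z}$ with $p(y) = q(z)$ and, as morphisms, pairs $\br{\phi, \psi}$ with $p(\phi) = q(\psi)$, both as strict equalities.

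Next, I would apply the observation above to argue that the coherence datum $\alpha$ in the 2-fibre product is forced and trivial: the very existence of a morphism $p(y) \to q(z)$ over $\id_T$ in $S$ implies $p(y) = q(z)$, and that morphism must then be $\id_{p(y)}$. Likewise, the commuting square in the morphism datum degenerates to the strict identity $p(\phi) = q(\psi)$. Forgetting $\alpha$ therefore gives a well-defined assignment $\Phi : \mcY \times_S \mcZ \to \mcY \times_S^{str} \mcZ$ which is a bijection on objects and on each hom-set, and which preserves identities and composition on the nose by construction — i.e.\ a strict isomorphism of categories.

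There is no real obstacle in this argument; all the content is concentrated in the single observation that $S$ is fibred in sets over $\Aff$, after which the identification is essentially definitional. The only point requiring a little care is to fix the convention for the 2-fibre product in $\PSt$, but each of the standard conventions (pseudo-pullback, bicategorical pullback of categories fibred in groupoids) agrees in the present set-valued situation, so the statement is robust under that choice.
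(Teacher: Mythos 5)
Your proof is correct and follows essentially the same route as the paper's: both arguments rest on the single observation that $S$, as a prestack, has discrete fibre categories, which forces the comparison isomorphism $p(y) \to q(z)$ to be an identity and collapses the $2$--fibre product onto the strict one. Nothing further is needed.
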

\begin{proof}
By \cite[\href{https://stacks.math.columbia.edu/tag/0040}{Lemma 0040}]
{stacks-project},
an object of $\mcY \times_S \mcZ$ is a tuple $(U, y, z, f)$ where
$U \in \Ob{\Aff}$, $y \in \Ob{\mcY_U}, z \in \Ob{\mcZ_U}$,
$f$ is an isomorphism $p(y) \to[\sim] q(z)$ in $S_U$.
However, since the fibre category $S_U$ is discrete --- that is, the set
of morphisms of $k$--schemes $U \to S$ --- $f$ must be
the identity $\id_u$ for some $u : U \to S$ and, we must have
$p(y) = u = q(z)$.

The morphism sets $(\mcY \times_S \mcZ)((U, y, z, f), (U', y', z', f))$
consist of tuples $(a, b)$ where $a : y \to y', b : z \to z'$ are morphisms
in $\mcY, \mcZ$ respectively such that $p(a), q(b) : U \to U'$ are the same
morphism of $k$--schemes and the following diagram commutes:
\[\begin{tikzcd}
p(y) \ar[r, "p(a)"] \ar[d, "f" left] & p(y') \ar[d, "f'"] \\
q(z) \ar[r, "q(b)" below] & q(z')
\end{tikzcd}\]
which is equivalent to the first condition since $f = \id_U, f' = \id_{U'}$
by the previous paragraph. This gives an equality of sets:
\[
(\mcY \times_S \mcZ)((U, y, z, f), (U', y', z' f'))
= (\mcY \times_S^{str} \mcZ)((y, z), (y', z'))
\]

From this description, we can see that the mappings
$(U, y, z, f) \mapsto (y, z)$ and $(a, b) \mapsto (a, b)$ are bijections, and it is
straightforward to check that they assemble to a functor
$\mcY \times_S \mcZ \to \mcY \times_S^{str} \mcZ$, which must be
a strict isomorphism, as it induces bijections of object and morphism sets.
\end{proof}

\begin{prop}\label{prop:prest-prod-strict-nat}
In the context \cref{prop:prest-prod-strict}, if we have two $2$--morphisms in
$\PSt_{/S}$ $\alpha, \beta : f \To g : \mcW \to \mcY \times_S \mcZ$, then,
$\alpha = \beta$ if and only if $pr_i \star \alpha = pr_i \star \beta$ for both
$i = 1, 2$.
\end{prop}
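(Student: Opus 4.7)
The plan is to reduce the claim to a pointwise check on components of the natural transformations, and then invoke the explicit pair description of morphisms in $\mcY \times_S \mcZ$ furnished by \cref{prop:prest-prod-strict}. The ``only if'' direction is immediate from the definition of horizontal composition. For the nontrivial ``if'' direction, I would first recall that $\PSt_{/S}$ is a strict $2$--category whose $2$--morphisms between $1$--morphisms of categories fibred in groupoids are ordinary natural transformations, so $\alpha = \beta$ is equivalent to the componentwise equality $\alpha_w = \beta_w$ in $\br{\mcY \times_S \mcZ}_U$ for every $U \in \Ob{\Aff}$ and every object $w$ of $\mcW$ over $U$.

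Next, I would observe that each projection $pr_i : \mcY \times_S \mcZ \to \mcY$ (resp. $\mcZ$) is a strict functor, so the whiskering formula gives $\br{pr_i \star \alpha}_w = pr_i\br{\alpha_w}$, and similarly for $\beta$. Therefore the hypothesis $pr_1 \star \alpha = pr_1 \star \beta$ and $pr_2 \star \alpha = pr_2 \star \beta$ is equivalent to $pr_1\br{\alpha_w} = pr_1\br{\beta_w}$ and $pr_2\br{\alpha_w} = pr_2\br{\beta_w}$ for all such $w$.

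Finally, by \cref{prop:prest-prod-strict}, a morphism in $\mcY \times_S \mcZ$ from $(U, y, z, \id_U)$ to $(U', y', z', \id_{U'})$ is recorded by a pair $(a, b)$ with $a : y \to y'$ in $\mcY$ and $b : z \to z'$ in $\mcZ$ satisfying $p(a) = q(b)$, and the maps $pr_1, pr_2$ read off $a$ and $b$ respectively. Since such a pair is uniquely determined by its two coordinates, the equalities above force $\alpha_w = \beta_w$, completing the argument. There is no real obstacle here; the only subtlety worth flagging is reconciling that the $2$--morphisms live over a (weak) fibre product, but thanks to \cref{prop:prest-prod-strict}, they can be manipulated as if in the strict fibre product, where the projection-determines-the-pair principle is tautological.
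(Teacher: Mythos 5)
Your proposal is correct and follows essentially the same route as the paper: reduce to components $\alpha_w$, note that whiskering with the projections reads off the two coordinates of $\alpha_w$, and conclude from the pair description of morphisms in \cref{prop:prest-prod-strict} that the two coordinates determine the morphism. No issues.
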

\begin{proof}
We observe that for an object $w \in \mcW$,
$\alpha_w = (\alpha_{w, 1}, \alpha_{w, 2})$ for two morphisms
$\alpha_{w, i} : pr_i(f(w)) \to pr_i(g(w))$ for $i = 1, 2$ in $\mcY, \mcZ$
respectively. Similarly $\beta_w = (\beta_{w, 1}, \beta_{w, 2})$
for morphisms $\beta_{w, i} : pr_i(f(w)) \to pr_i(g(w)), i = 1, 2$.
Then, $\alpha_w = \beta_w$ if and only if
$(pr_i \star \alpha)_w = pr_i(\alpha_w) = \alpha_{w, i}
= \beta_{w, i} = pr_i(\beta_w) = (pr_i \star \beta)_w$.
\end{proof}

\subsection{Sheaves on Prestacks}

For every $\mcX \in \PSt_{/S}$, we will consider $\mcX$ equipped with
the \'etale topology inherited from $\Aff_{/S}$ in the sense of
\cite[\href{https://stacks.math.columbia.edu/tag/06NV}{Definition 06NV}]
{stacks-project}. This is generally referred to as the large \'etale site of
$\mcX$. By a sheaf on $\mcX$, we will mean a functor
$X^\op \to \Set$ that satisfies the sheaf condition with respect to this
inherited \'etale topology. By the $2$--Yoneda lemma, this site is equivalent
to the site $\Aff_{/S/\mcX}$ whose objects are morphisms of prestacks
$U \to \mcX$ over $S$, where $U$ is an affine scheme over $S$,
and coverings are \'etale coverings.

\begin{notn}
For any two prestacks $\mcX$, $\mcY, \mcZ$, any morphism of prestacks
$r = (f, g) : \mcY \to \mcZ^2$, and any presheaves $A, E, F$ on $\mcX$, which
may be (pre)sheaves of Abelian groups, rings or modules, depending
on context, we will use the following notation:
\begin{figure}[H]
\begin{tabularx}{\textwidth}{l c l}
$\mcO_{\mcX}$ & : & the structure sheaf of $\mcX$ defined by the composite
  $\mcX \to[p_\mcX] \Aff_{/S} \to[\Gamma] \Set$ \\
$E|_x$ & : & the composite $\mcX_{/x}^\op \to \mcX^\op \to[E] \Set$ for
  $x \in \Ob{\mcX}$ \\
$\LMod(A)$ & : & category of left $A$--modules \\
$\RMod(A)$ & : & category of right $A$--modules \\
$\Mod(A)$ & : & category of $A$--bimodules \\
$\QCoh(\mcX)$ & : & category of quasicoherent $\mcO_\mcX$--modules \\
$\Vect(\mcX)$ & : & category of finite locally free $\mcO_\mcX$--modules \\
$\Nat(E, F)$ & : & set of natural transformations $E \to F$ \\
$\HHom(E, F)$ & : & the internal $\Hom$ object in the topos $\PSh(\mcX)$ \\
$\Hom_{\mcO_\mcX}(E, F)$ & : & set of $\mcO_\mcX$--module maps $E \to F$ \\
$\Gamma(E)$ & : & alternate notation for $\Hom_{\mcO_\mcX}(\mcO_\mcX, E)$ \\
$\HHom_{\mcO_\mcX}(E, F)$ & : & the internal $\Hom$ object in the Abelian
  category $\Mod(\mcO_\mcX)$ \\
$[E, F]$ & : & notation for $\HHom_{\mcO_\mcX}(E, F)$ \\
$E^\vee$ & : & notation for $\HHom_{\mcO_\mcX}(E, \mcO_\mcX)$
\end{tabularx}
\end{figure}
\end{notn}

\begin{warn}
In many references, such as \cite{Champs-Alg}, \cite{AlperModuli} or
\cite{AG-Olsson}, sheaves on an algebraic stack $\mcX$, not general prestacks,
are defined to be sheaves on the full subcategory of $\Aff_{/S/\mcX}$ consisting
of the smooth morphisms $U \to \mcX$, equipped with the \'etale topology.
This is called the lisse-\'etale site of $\mcX$, and we will denote it as
$\mcX_{\text{lis-\'et}}$.
There is a functor $\Sh(\mcX) \to \Sh(\mcX_{\text{lis-\'et}})$ given by
pre-composing with the inclusion
$\mcX_{\text{lis-\'et}} \to \Aff_{/S/\mcX} \simeq \mcX$, and this
functor is not an equivalence in general, but it restricts to an equivalence
$\QCoh(\mcX_{\text{lis-\'et}}) \to \QCoh(\mcX)$
\cite[\href{https://stacks.math.columbia.edu/tag/07B1}{Lemma 07B1}]
{stacks-project}. Hence, when dealing with quasi-coherent
sheaves, most results and techniques are readily transferrable between the two
approaches but some care might be necessary.
\end{warn}

\begin{rmk}
We recall that for a morphism of prestacks $f : \mcX \to \mcY$, the pullback
functor $f^* : \PSh(\mcY) \to \PSh(\mcX)$ is simply the functor
$\Fun(\mcY^\op, \Set) \to \Fun(\mcX^\op, \Set)$ defined by precomposition with
$f$. The key fact is that, since we are dealing with the large \'etale site,
this precomposition functor restricts to a functor on sheaves
$f^* : \Sh(\mcY) \to \Sh(\mcX)$
\cite[\href{https://stacks.math.columbia.edu/tag/06TS}{Lemma 06TS}]
{stacks-project} --- no sheafification is required.
It is easy to check that this makes
the functor $f^* : \Sh(\mcY) \to \Sh(\mcX)$ preserve all limits and
colimits (set-theoretic considerations aside).
Furthermore, since $f^*\mcO_{\mcX} = \Gamma \circ p_\mcX \circ f
= \Gamma \circ p_\mcY = \mcO_{\mcY}$, $f^*$ also restricts to the usual pullback
of modules $f^* : \Mod(\mcO_\mcX) \to \Mod(\mcO_\mcY)$, which in turn preserves
quasicoherent modules.
\end{rmk}

We now record some basic facts about sheaves on prestacks that should be well
known, will be necessary for proving our results but are difficult to find
references for.

\begin{prop}\label{prop:pullback-nat-OX-alg}
Let $f, g : \mcY \to \mcX$ be $1$--morphisms and $\alpha : f \To g$ a
$2$--morphism in $\PSt_{/S}$, and $A : \mcY^\op \to \Set$ a presheaf of
$\mcO_\mcX$--modules. Then,
$A \star \alpha^\op : g^*A \To f^*A : \mcX^\op \to \Set$ is a morphism of
$\mcO_\mcY$--modules. If $A$ is an $\mcO_\mcX$--algebra, then
$A \star \alpha^\op$ is a map of $\mcO_\mcY$--algebras.
\end{prop}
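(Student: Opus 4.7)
The plan is to verify the claim componentwise, at each object $y \in \Ob{\mcY}$, and to exploit the hypothesis that $\alpha$ is a $2$--morphism in the slice $\PSt_{/S}$, not just in $\PSt$. Unwinding definitions, the component of $A \star \alpha^\op$ at $y$ is the map $A(\alpha_y) : (g^*A)(y) = A(g(y)) \to A(f(y)) = (f^*A)(y)$, where $\alpha_y : f(y) \to g(y)$ is the component of $\alpha$ in $\mcX$. The decisive observation is that, since $\alpha$ lives over $S$, the whiskering $p_\mcX \star \alpha : p_\mcX \circ f \To p_\mcX \circ g$ must equal the identity $2$--morphism $e_{p_\mcY}$, because both $p_\mcX \circ f$ and $p_\mcX \circ g$ already equal $p_\mcY$ and the fibre category of $\Aff_{/S}$ over $p_\mcY(y)$ is discrete (the morphism-set argument in \cref{prop:prest-prod-strict}). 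In particular, $p_\mcX(\alpha_y) = \id_{p_\mcY(y)}$.

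Applying $\Gamma$, this gives $\mcO_\mcX(\alpha_y) = \id_{\mcO_\mcY(y)}$ under the canonical identifications $(f^*\mcO_\mcX)(y) = \mcO_\mcX(f(y)) = \mcO_\mcY(y) = \mcO_\mcX(g(y)) = (g^*\mcO_\mcX)(y)$. Since $A$ is a presheaf of $\mcO_\mcX$--modules, the transition map $A(\alpha_y)$ is by construction semilinear with respect to the ring homomorphism $\mcO_\mcX(\alpha_y)$; the previous paragraph upgrades this to genuine $\mcO_\mcY(y)$--linearity. The compatibility of the family $\{A(\alpha_y)\}_{y \in \Ob{\mcY}}$ with the $\mcO_\mcY$--actions on $g^*A$ and $f^*A$ as presheaves, not merely at each $y$ separately, follows from the fact that the $\mcO_\mcY$--action on a pullback $h^*A$ is induced via the canonical map $\mcO_\mcY \to h^*\mcO_\mcX$ from the $\mcO_\mcX$--action on $A$, and horizontal composition with $\alpha^\op$ commutes with this induction. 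The algebra case is handled identically, replacing ``module map'' with ``algebra map'' throughout and using the $\mcO_\mcX$--algebra structure on $A$ in place of the module structure.

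I do not anticipate any genuine obstacle; the argument is a short diagram chase whose only delicate point is justifying that $p_\mcX \star \alpha = e_{p_\mcY}$, which is forced by the slice structure together with discreteness of fibres of $\Aff_{/S} \to \Aff$. The remaining care required is strictly bookkeeping about the canonical identifications of sections of pulled-back structure sheaves.
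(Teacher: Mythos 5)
Your proposal is correct and follows essentially the same route as the paper's proof: both reduce to the observation that a $2$--morphism in $\PSt_{/S}$ satisfies $p_\mcX(\alpha_y) = \id_{p_\mcY(y)}$, hence $\mcO_\mcX(\alpha_y) = \id_{\mcO_\mcY(y)}$, and then invoke naturality of the module (resp.\ algebra) structure map of $A$ against $\alpha_y$ to upgrade semilinearity over the identity to genuine $\mcO_\mcY(y)$--linearity. The paper phrases the semilinearity step as a commutative naturality square for the action $\mcO_\mcX \times A \to A$, but the content is identical to yours.
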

\begin{proof}
For any object $y \in \mcY$,
we first recall that $\mcO_\mcX$ is the composite
$\mcX \to[p_Y] \Aff_{/S} \to[\Gamma] \Set$ so that
\[
f^*\mcO_\mcX(y) = \mcO_\mcX(f(y)) = \Gamma(p_\mcX(f(y))) = \Gamma(p_\mcY(y))
  = \mcO_\mcY(y)
\]
The same holds for $g^*\mcO_\mcX(y)$ so that it is also equal to
$\mcO_\mcY(y)$. Next, since $\alpha$ is a $2$--morphism of $\PSt_{/S}$,
by definition, we have $p_\mcX(\alpha_y) = \id_{p_\mcY(y)}$
so that
\[
(\mcO_\mcX \star \alpha^\op)_y
= \mcO_\mcX(\alpha_y^\op)
= \Gamma(p_\mcX(\alpha_y^\op))
= \Gamma(\id_{p_\mcY(y)}^\op)
= \id_{\Gamma(p_\mcY(y))}
= \id_{\mcO_\mcY(y)}
\]

Let $s : \mcO_\mcX \times A \to A$ be the structure map. Then, the component
$(f^*s)_y : f^*\mcO_\mcX(y) \times f^*A(y) \to f^*A(y)$ is the map
$s_{f(y)} : \mcO_\mcX(f(y)) \times A(f(y)) \to A(f(y))$, and similarly
for $(g^*s)_y$.
By the naturality of $s$, the following diagram commutes:
\[\begin{tikzcd}[row sep=large, column sep=huge]
\mcO_\mcX(g(y)) \times A(g(y))
  \ar[r, "s_{g(y)}"]
  \ar[d, "\mcO_\mcX(\alpha_y^\op) \times A(\alpha_y^\op)" left] &
A(g(y)) \ar[d, "A(\alpha^\op_y)"] \\
\mcO_\mcX(f(y)) \times A(f(y)) \ar[r, "s_{f(y)}" below] &
A(f(y))
\end{tikzcd}\]
However, by the previous paragraph,
$\mcO_\mcX(\alpha_y^\op) = \id_{\mcO_\mcY(y)}$, and, on the other hand,
$A(\alpha_y^\op) = (A \star \alpha^\op)_y$. Thus, the above diagram is:
\[\begin{tikzcd}[row sep=large, column sep=huge]
\mcO_\mcY(y) \times g^*A(y)
  \ar[r, "(g^*s)_y"]
  \ar[d, "\id_{\mcO_\mcY, y} \times (A \star \alpha^\op)_y" left] &
g^*A(y) \ar[d, "(A \star \alpha^\op)_y"] \\
\mcO_\mcY(y) \times f^*A(y) \ar[r, "(f^*s)_y" below] &
f^*A(y)
\end{tikzcd}\]
and its commutativity along with the observation that $A(\alpha_y^\op)$ is a
morphism of Abelian groups shows that $A \star \alpha^\op$ is morphism of
$\mcO_\mcY$--modules. Furthermore, when $A$ is, in addition, a sheaf
of commutative rings, $A(\alpha_y^\op)$ is a morphism of commutative rings
and, $A \star \alpha^\op$ is a morphism of $\mcO_\mcY$--algebras.
\end{proof}

\begin{cns}\label{cns:pullback-Hom}
For a morphism of prestacks $f : \mcY \to \mcZ$ and two
$E, F$ on $\mcZ$, we have a map of presheaves:
\[
\xi_f : f^*\HHom(E, F) \to[] \HHom(f^*E, f^*F)
\]
defined as follows. Let $y \in \Ob{\mcY}$ and
$s \in f^*\HHom(E, F)(y)$.
Then, $f^*\HHom(E, F)(y) = \Hom(E|_{f(y)}, F|_{f(y)})$ and $s$ is a natural
transformation of functors
$E|_{f(y)} \To F|_{f(y)} : \mcZ_{/f(y)} \to \Set$. This, then, yields
a horizontal composite $s \star f_{/y}$, where
$f_{/y} : \mcY_{/y} \to \mcZ_{/f(y)}$ is the functor induced by $f$.
This is shown below:
\[\begin{tikzcd}[row sep=small]
& &
\mcZ
    \ar[rd, "E" above right]
    \ar[dd, Rightarrow, "s", shorten=0.5em] & \\
\mcY_{/y}
    \ar[r, "f_{/y}"] &
\mcZ_{/f(y)}
    \ar[ru, "F_{f(y)}" above left]
    \ar[rd, "F_{f(y)}" below left] & &
\Set \\ & &
\mcZ
    \ar[ru, "F" below right] & \\
\end{tikzcd}\]
This is a map of presheaves $f^*E|_y \to f^*F|_y$ and we denote this by
$f^*_{/y}(s)$ --- noting that it is precisely the pullback of the morphism
$s$ under $f_{/y}$.
\end{cns}

\begin{prop}
In the context of \cref{cns:pullback-Hom}, if $E, F$ are $\mcO_\mcZ$--modules,
then $\xi_f$ restricts to a morphism of sheaves:
\[
\xi_f : f^*\HHom_{\mcO_\mcZ}(E, F) \to[] \HHom_{\mcO_\mcY}(f^*E, f^*F)
\]
\end{prop}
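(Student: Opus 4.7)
The plan is to verify pointwise compatibility of $\xi_f$ with the $\mcO$-module structures on the internal Homs, and then inherit naturality of the restricted morphism from the underlying $\xi_f$ of \cref{cns:pullback-Hom}. First I would unpack the targets: for $y \in \Ob{\mcY}$, we have $f^*\HHom_{\mcO_\mcZ}(E,F)(y) = \HHom_{\mcO_\mcZ}(E,F)(f(y))$, which is the set of $\mcO_\mcZ|_{f(y)}$-linear natural transformations $E|_{f(y)} \To F|_{f(y)}$; and $\HHom_{\mcO_\mcY}(f^*E, f^*F)(y)$ is the set of $\mcO_\mcY|_y$-linear natural transformations $f^*E|_y \To f^*F|_y$. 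The map $\xi_f$ at $y$ sends $s$ to $f_{/y}^*(s) = s \star f_{/y}$.

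The key auxiliary identification I would establish is $f_{/y}^*(\mcO_\mcZ|_{f(y)}) = \mcO_\mcY|_y$ as presheaves of rings on $\mcY_{/y}$. For any object $a : y' \to y$, both sides evaluate to $\Gamma(p_\mcY(y'))$ once we use $p_\mcZ \circ f = p_\mcY$, which holds because $f$ is a morphism in $\PSt_{/S}$. This is essentially the same identification that appears in the proof of \cref{prop:pullback-nat-OX-alg}.

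Given an $\mcO_\mcZ|_{f(y)}$-linear natural transformation $s : E|_{f(y)} \To F|_{f(y)}$, the component of $\xi_f(s) = f_{/y}^*(s)$ at $a : y' \to y$ is the map $s_{f(a)} : E(f(y')) \to F(f(y'))$, which is $\mcO_\mcZ(f(y'))$-linear by hypothesis. Under the identification above, this is exactly $\mcO_\mcY(y')$-linearity, so $\xi_f(s)$ is an $\mcO_\mcY|_y$-linear natural transformation, i.e., an element of $\HHom_{\mcO_\mcY}(f^*E, f^*F)(y)$, as desired. Naturality of the restricted morphism in $y$ is then inherited from naturality of the original $\xi_f$ constructed in \cref{cns:pullback-Hom}, with no further check required.

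There is no substantial obstacle here; the only real work is carefully tracking structure sheaves along the slice-induced pullback $f_{/y}^*$, which reduces to the observation that $f$ lies over $S$. One could alternatively phrase the whole argument $2$-functorially by viewing $s$ as a $2$-morphism and applying \cref{prop:pullback-nat-OX-alg} directly to a horizontal composite, but the elementwise verification above is the most transparent route.
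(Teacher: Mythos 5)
Your proposal is correct and takes essentially the same route as the paper, which simply invokes the fact that pullbacks of morphisms of module presheaves are again morphisms of module presheaves, so that $f^*_{/y}(s)$ is a module morphism whenever $s$ is. Your elementwise verification, including the identification $f_{/y}^*(\mcO_\mcZ|_{f(y)}) = \mcO_\mcY|_y$ via $p_\mcZ \circ f = p_\mcY$, is just a careful spelling-out of that one-line argument.
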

\begin{proof}
This follows from the fact that pullbacks of morphisms of module presheaves
are morphisms of module presheaves, so that $f^*_{/y}(s)$ is a morphism
of modules whenever $s$ is.
\end{proof}

\begin{prop}\label{prop:pullback-Hom}
In the context of \cref{cns:pullback-Hom}, if $E, F$ are $\mcO_\mcZ$--module
sheaves and if $E$ is finite locally free, then $\xi_f$ is an isomorphism:
\[
\xi_f : f^*\HHom_{\mcO_\mcZ}(E, F) \to[\cong] \HHom_{\mcO_\mcY}(f^*E, f^*F)
\]
\end{prop}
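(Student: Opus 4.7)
The plan is to reduce to the free case via local triviality of $E$ and verify the isomorphism directly there. Two auxiliary ingredients are needed: additivity of $\xi_f$ in the first argument, and functoriality of $\xi_f$ under composition of morphisms of prestacks.

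First I would handle the base case $E = \mcO_\mcZ$. Evaluation at the unit section yields a natural isomorphism $\HHom_{\mcO_\mcZ}(\mcO_\mcZ, F) \cong F$, and likewise $\HHom_{\mcO_\mcY}(\mcO_\mcY, f^*F) \cong f^*F$; using the identification $f^*\mcO_\mcZ = \mcO_\mcY$ noted in the remark before \cref{cns:pullback-Hom}, one unwinds the construction of $\xi_f$ and sees that under these identifications it is the identity on $f^*F$. Appealing to naturality of $\xi_f$ in $E$ and its compatibility with the splittings $\HHom_{\mcO_\mcZ}(E_1 \oplus E_2, -) \cong \HHom_{\mcO_\mcZ}(E_1, -) \times \HHom_{\mcO_\mcZ}(E_2, -)$ on both sides, I would then extend this to the free case $E \cong \mcO_\mcZ^n$ for any $n \geq 0$.

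Next comes the local-to-global step. Since $E$ is finite locally free on $\mcZ$, I can choose an \'etale cover $\{g_i : V_i \to \mcZ\}_{i \in I}$ by affine $S$-schemes with $g_i^*E \cong \mcO_{V_i}^{n_i}$. Base changing along $f$, I obtain an \'etale cover $\{h_i : V_i \times_\mcZ \mcY \to \mcY\}$ in the inherited \'etale site on $\mcY$, on each piece of which $f^*E$ is free. Since being an isomorphism of \'etale sheaves is local, it suffices to check that each $h_i^* \xi_f$ is an isomorphism. By the functoriality of $\xi$ under composition of pullbacks, $h_i^*\xi_f$ is naturally identified with the $\xi$-map attached to the morphism $V_i \times_\mcZ \mcY \to V_i$ applied to the trivialized module $g_i^*E \cong \mcO_{V_i}^{n_i}$, which is an iso by the previous step.

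The main obstacle will be the functorial bookkeeping: one must verify that $\xi$ is compatible with composition of morphisms in $\PSt_{/S}$, so that $\xi_{f \circ h}$ matches $h^*(\xi_f) \circ \xi_h$ modulo the canonical isomorphism $(f \circ h)^* \cong h^* f^*$, and that it interacts correctly with the pullback of $\mcO$-module operations such as $f^*(F^n) \cong (f^*F)^n$. Both are straightforward but require chasing the $2$-morphism composition in \cref{cns:pullback-Hom} through the slice categories of the various prestacks involved; this is the only real substance, as the rest is reduction and bookkeeping.
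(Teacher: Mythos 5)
Your proposal is correct and follows essentially the same route as the paper: reduce to the case of a free module by passing to a trivializing cover, then verify the free case by evaluating homomorphisms at the standard generators (the paper treats rank $n$ directly via a map $\gamma$ to $F(z_i)^{\oplus n_i}$ rather than rank one plus additivity, an immaterial difference). The only point to tighten is the localization step: for a general prestack $\mcZ$ the cover should be phrased objectwise --- for each $y \in \mcY$ one pulls back an \'etale cover of $p_\mcZ(f(y))$ trivializing $E|_{f(y)}$ to a covering of $y$ in the inherited large \'etale site, as the paper does via Cartesian lifts --- rather than as a base-changed family $V_i \times_\mcZ \mcY \to \mcY$ covering the whole prestack, which is not literally a covering in the site the paper works with.
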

\begin{proof}
Let $y \in \Ob{\mcY}$. We wish to show that the component $\xi_{f, y}$
is an isomorphism.
Since $\xi_f$ is a morphism of sheaves, it suffices to produce a cover of
$\set{y_i \to y}_{i \in I}$ and show that
$\xi_{f, y_i}$ is an isomorphism for each $i \in I$.
For this, we first choose a covering $\set{c_i : z_i \to f(y)}_{i \in I}$
such that $t_i : E|_{z_i} \to[\cong] \mcO_{\mcZ}|_{z_i}^{\oplus n_i}$ for
some finite integer $n_i$. This means
$\set{p_\mcZ(c_i) : p_\mcZ(z_i) \to p_\mcZ(f(y))}_{i \in I}$
is a cover of $U := p_\mcZ(f(y)) = p_\mcY(y)$ in $\Aff_{/S}$.
Now, $y$ corresponds to a map $m_y : U \to \mcY$ satisfying $m_y(\id_U) = y$,
so that $f(y) = f(m_y(\id_U))$. Letting $U_i := p_\mcZ(z_i)$, we get objects
$y_i := f(m_y(p_\mcZ(c_i)(\id_{U_i})))$. Then,
$d_i := p_\mcZ(c_i) : U_i \to U$ is a morphism in $\Aff_{/S/U}$ yielding
morphisms $m_y(d_i) : y_i \to y$ in $\mcY$ over $d_i : U_i \to U$, which form a
cover of $U$. This shows that $m_y(d_i) : y_i \to y$ form a cover of $y$.

Now, $\xi_{f, y_i}$ is a function:
\[
\xi_{f, y_i} : \Hom_{\mcO_\mcZ|_{f(y_i)}}(E|_{f(y_i)}, F|_{f(y_i)}) \to
\Hom_{\mcO_\mcY|_{y_i}}(f^*E|_{y_i}, f^*F|_{y_i})
\]
By construction, $f(y_i) = z_i$. We then observe that we have strict equalities
\[
f^*(E|_{y_i}) = f^*_{/y_i}(E|_{f(y_i)}) = f^*_{/y_i}(E|_{z_i}),
f^*(F|_{y_i}) = f^*_{/y_i}(F|_{f(y_i)}) = f^*_{/y_i}(F|_{z_i})
\]
and noticing that $\xi_{f, y_i}$ is given by pullback of morphisms of
presheaves along $f_{/u_i}$, we have a commutative diagram as follow, by
the functoriality of $f_{/y_i}^*$:
\[\begin{tikzcd}
\Hom_{\mcO_\mcZ|_{z_i}}(E|_{z_i}, F|_{z_i})
    \ar[r, "\xi_{f, y_i} = f_{/y_i}^*"]
    \ar[d, "- \circ t_i^{-1}" left] &
\Hom_{\mcO_\mcY|_{y_i}}(f^*_{/y_i}(E|_{z_i}), f_{/y_i}^*(F|_{z_i}))
    \ar[d, "- \circ f^*_{/y_i}(t_i^{-1})"] \\
\Hom_{\mcO_\mcZ|_{z_i}}(\mcO_\mcZ|_{z_i}^{\oplus n_i}, F|_{z_i})
    \ar[r, "f_{/y_i}^*" below] &
\Hom_{\mcO_\mcY|_{y_i}}(f^*_{/y_i}(\mcO_\mcZ|_{z_i}^{\oplus n_i}),
    f_{/y_i}^*(F|_{z_i}))
\end{tikzcd}\]
where the vertical maps are isomorphisms.
Hence, it suffices to show that the bottom horizontal map is a bijection.

Let $e_1, \dots, e_{n_i}$ be the standard generators of the free module
$\mcO_{\mcZ}(z_i)$--modules
$\mcO_{\mcZ}|_{z_i}^{\oplus n_i}(\id_{z_i}) = \mcO_\mcZ(z_i)^{\oplus n_i}$.
Then, the mapping
\[
\gamma : \Hom_{\mcO_\mcZ|_{z_i}}(\mcO_\mcZ|_{z_i}^{\oplus n_i}, F|_{z_i})
    \to F(z_i)^{\oplus n_i} :
s \mapsto (s_{\id_{z_i}}(e_1), \dots, s_{\id_{z_i}}(e_{n_i}))
\]
can be verified to be an isomorphism of $\mcO_{\mcZ}(z_i)$--modules. To
see this, we can produce a map in the reverse direction as follows.
Let $(r_1, \dots, r_{n_i}) \in F(z_i)^{\oplus n_i}$. Then, this provides
a map $\mcO_\mcZ|_{z_i}^{\oplus n_i} \to F|_{z_i}$ as follows: for each
object $z \to z_i$ in $\mcZ_{/z_i}$, we send
$(e_1, \dots, e_n)|_z$ to $(r_1, \dots, r_n)|_z$.
It is not hard to verify that this is natural in $z$, and is an
inverse to $\gamma$. We have a similar isomorphism:
\[
\gamma' :
\Hom_{\mcO_\mcY|_{y_i}}(f^*_{/y_i}(\mcO_\mcZ|_{z_i}^{\oplus n_i}),
    f_{/y_i}^*(F|_{z_i})) \to
f^*_{/y_i}(F|_{z_i})(\id_{y_i})
\]
We notice that $f^*_{/y_i}(F|_{z_i})(\id_{y_i}) = F(f(y_i)) = F(z_i)$,
and $f^*_{/y_i}(\mcO_\mcZ|_{z_i})(\id_{y_i}) = \mcO_\mcZ(f(y_i))
= \mcO_\mcZ(z_i)$.
We then observe that
\[
(f_{/y_i}^*s)_{\id_{y_i}}
= (s \star f_{/y_i})_{\id_{y_i}}
= s_{f_{/y_i}^*(\id_{z_i})}
= s_{\id_{z_i}}
\]
Together, these show that the following diagram commutes:
\[\begin{tikzcd}
\Hom_{\mcO_\mcZ|_{z_i}}(\mcO_\mcZ|_{z_i}^{\oplus n_i}, F|_{z_i})
    \ar[rr, "f_{/y_i}^*" above]
    \ar[rd, "\gamma" below left] & &
\Hom_{\mcO_\mcY|_{y_i}}(f^*_{/y_i}(\mcO_\mcZ|_{z_i}^{\oplus n_i}),
        f_{/y_i}^*(F|_{z_i}))
    \ar[ld, "\gamma'" below right] \\ &
F(z_i)^{\oplus n_i} &
\end{tikzcd}\]
which shows that horizontal map is an isomorphism, as required.
\end{proof}


\section{Relative Spectrum for Prestacks}\label{sec:RelSpec}

We define a relative spectrum for sheaves of algebras over prestacks using the
same formula used for sheaves on the lisse-\'etale of an algebraic stack
in \cite[\S 10.2.1]{AG-Olsson}. This is the main tool used in our construction
and study of the relevant moduli stacks.

\begin{defn}[Relative Spectrum For Prestacks]
\label{defn:rel-spec}
For a prestack $\mcX \in \PSt_{/S}$, and any presheaf $A : \mcX^\op \to \Set$ of
$\mcO_\mcX$--algebras, we define a category $\ul\Spec_\mcX(A)$ as follows:
\begin{itemize}
\item Objects are tuples $(u, \sigma)$ where:
  \begin{itemize}
  \item $u : U \to \mcX$ is a morphism in $\PSt_{/S}$ with $U \in \Aff_{/S}$
  \item $\sigma : u^*A \to \mcO_U$ is a morphism of
    $\mcO_\mcX|_x$--algebras
  \end{itemize}
\item Morphisms $(u, s) \to (u', s')$ are morphisms
  $a : u(\id_U) \to u'(\id_{U'})$ in $\mcX$
  such that, if $f := p_\mcX(a)$ and the natural transformation
  $u' \circ f \To u$ corresponding to $a$ is $\tilde{a}$, the following diagram
  of $\mcO_U$--algebras commutes:
  \[\begin{tikzcd}
  (u' \circ f)^*A = f^*(u')^*A
    \ar[from=rr, "A \star \tilde{a}^\op" above]
    \ar[rd, "f^*s'" below left] & &
  u^*A \ar[ld, "s" below right] \\ &
  (u' \circ f)^*\mcO_\mcY = \mcO_U &
  \end{tikzcd}\]
\end{itemize}
This category comes equipped with a functor
$\pi_A : \ul\Spec_\mcX(A) \to \mcX$ defined by sending $(u, s)$ to $u(\id_U)$
and $a$ to $a$.
The category $\ul\Spec_\mcX(A)$ will be called the relative
spectrum of $A$ over $\mcX$ and $\pi_A$ will be called its projection.
We will suppress mention of $\mcX$ when it is clear from context, in which
case, we will simply write $\ul\Spec(A)$ for $\ul\Spec_\mcX(A)$.
When $A$ is clear from context, we will write $\pi$ instead of $\pi_A$
\end{defn}

\begin{warn}
$\ul\Spec(A)$ is originally defined only for the case where $\mcX$ is an
algebraic stack and $A$ is a sheaf of $\mcO_{\mcX_{\text{lis-\'et}}}$--modules
on the lisse-\'etale site of $\mcX$. Our definition should subsume this
by the equivalence of the categories of quasicoherent sheaves on the large
\'etale site and those on the lisse-\'etale site
\cite[\href{https://stacks.math.columbia.edu/tag/07B1}{Lemma 07B1}]
{stacks-project}, but we will not address this point in this work as
we will directly show that our construction has some of the same properties
that we will need.
\end{warn}

We proceed to show that the relative spectrum is the Grothendieck
construction of the following functor.

\begin{cns}
Given a prestack $\mcX \in \PSt_{/S}$, and two presheaves $A, B$ of
$\mcO_\mcX$--algebras, we define the internal Hom object
$\CCAlg_{\mcO_\mcX}(A, B)$ in the category of $\mcO_\mcX$--algebras as follows.
We set
\[
\CCAlg_{\mcO_\mcX}(A, B)(x) := \CAlg_{\mcO_\mcX|_x}(A|_x, B|_x)
\]
where the right hand side is the $\mcO_\mcX(x)$--algebra of
$\mcO_\mcX|_x$--algebra morphisms $A|_x \to B|_x$.

Given a morphism $\phi : x \to x'$ in $\mcX$, we have a functor
$\ol\phi : \mcX_{/x} \to \mcX_{x'}$ sending $f : y \to x$ to
$\phi \circ f : y \to x'$, and commuting with the forgetful functors
$F_x : \mcX_{/x} \to \mcX, F_{x'} : \mcX_{/x'} \to \mcX$.
Consider an element $s : A|_{x'} \to B|_{x'}$ of
$\CAlg_{\mcO_\mcX|_{x'}}(A|_{x'}, B|_{x'})$. We define
$\CCAlg_{\mcO_\mcX}(A, B)(\phi)$ to be the pullback $\ol\phi^*(s)$ which is
a morphism of $\mcO_{\mcX}|_{x}$--algebras. Concretely, it is the
horizontal composite $s \star \ol\phi$:
\[\begin{tikzcd}[column sep=huge]
(\mcX_{/x})^\op \ar[r, "\ol\phi"] &
(\mcX_{/x'})^\op
  \ar[r, bend left=3em, ""{name=U, below}, "A|_{x'} = A \circ F_{x'}"]
  \ar[r, bend right=3em, ""{name=D, above}, "B|_{x'} = B \circ F_{x}" below]
  \ar[from=U, to=D, Rightarrow, "s'" description] &
\Set
\end{tikzcd}\]
noting that $A|_{x'} \circ \ol\phi = A \circ F_x \circ \ol\phi = A|_x$
and similarly for $B|_x$. This gives a presheaf of $\mcO_\mcX$--algebras:
\[
\CCAlg_{\mcO_\mcX}(A, B) : \mcX^\op \to \Set
\]
\end{cns}

\begin{prop}\label{prop:rel-spec-Gr-cons}
In the context of \cref{defn:rel-spec}, $\pi : \ul\Spec(A) \to \mcX$
is the Grothendieck construction
$\int \CCAlg_{\mcO_\mcX}(A, \mcO_\mcX) \to \mcX$.
\end{prop}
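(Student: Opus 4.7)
The plan is to invoke the classical correspondence between $\Set$-valued presheaves on a $1$-category $\mcX$ and discrete fibrations over $\mcX$: under this correspondence, $\int F \to \mcX$ is characterized, up to equivalence of categories over $\mcX$, as the discrete fibration whose fiber over $x \in \Ob{\mcX}$ is the set $F(x)$ and whose transport along morphisms recovers the functoriality of $F$. Taking $F := \CCAlg_{\mcO_\mcX}(A, \mcO_\mcX)$ regarded as a $\Set$-valued presheaf, it suffices to verify three things for $\pi : \ul\Spec(A) \to \mcX$: (i) $\pi$ is a discrete fibration; (ii) the fiber $\pi^{-1}(x)$ is naturally bijective with $F(x) = \CAlg_{\mcO_\mcX|_x}(A|_x, \mcO_\mcX|_x)$; (iii) the transport along $a : x \to x'$ induced by (i) agrees, under (ii), with the Grothendieck action $s' \mapsto s' \star \ol a$.

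For (i), I fix $a : x' \to x$ in $\mcX$ and $(w, \rho) \in \ul\Spec(A)$ with $w(\id_W) = x$, and construct a unique lift $(v, \tau) \to (w, \rho)$ over $a$. Strict 2-Yoneda forces $v = m_{x'}$ with $V = p_\mcX(x')$, and $f := p_\mcX(a) : V \to W$ is then determined. The natural transformation $\tilde a : w \circ f \To v$ is likewise uniquely determined by its $\id_V$-component, which must be the unique isomorphism $w(f) \to x'$ in the fiber $\mcX_V$ whose composite with the canonical Cartesian morphism $w(f) \to x$ equals $a$ --- such an isomorphism exists and is unique because $\mcX$ is fibered in groupoids, so that both $w(f) \to x$ and $a$ are Cartesian lifts of $x$ by $f$. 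The commutative triangle in \cref{defn:rel-spec} then forces $\tau := f^*\rho \circ (A \star \tilde a^\op)$, which is a valid $\mcO_V$-algebra morphism by \cref{prop:pullback-nat-OX-alg}, proving both existence and uniqueness of the lift.

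For (ii), an object of $\pi^{-1}(x)$ is a pair $(m_x, \sigma)$ with $\sigma : m_x^*A \to \mcO_U$ an $\mcO_U$-algebra morphism, and the morphisms of $\ul\Spec(A)$ projecting to $\id_x$ collapse to equalities $\sigma = \sigma'$, so $\pi^{-1}(x)$ is a set. Factoring $m_x$ through the slice as the composite of the forgetful $\mcX_{/x} \to \mcX$ with some $\ol{m_x} : U \to \mcX_{/x}$ exhibits the latter as an equivalence of categories --- essentially surjective by the existence of Cartesian lifts in $\mcX$, fully faithful by their uniqueness --- so pulling back along $\ol{m_x}$ identifies $m_x^*A$ with $\ol{m_x}^*(A|_x)$ and $\mcO_U$ with $\ol{m_x}^*(\mcO_\mcX|_x)$, yielding a bijection $\sigma \leftrightarrow \sigma^\flat$ between $\pi^{-1}(x)$ and $F(x)$. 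For (iii), I would compare the formula $\sigma' \mapsto f^*\sigma' \circ (A \star \tilde a^\op)$ from (i) with the Grothendieck action $s' \mapsto s' \star \ol a$; translating $\sigma'$ to $(\sigma')^\flat$ and the resulting $\tau$ to $\tau^\flat$ via (ii), the required identity reduces to the observation that the $\id_V$-component of $\tilde a$ is precisely the isomorphism mediating between $\ol{m_x}$ and $\ol a \circ \ol{m_{x'}}$ that arises from the universal property of Cartesian lifts.

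The main obstacle is the bookkeeping in (iii): matching the $\tilde a$-formulation used in $\ul\Spec(A)$ with the $\ol a$-formulation used in $F$ is essentially formal but requires careful management of several pullbacks and 2-cells, in particular the pseudo-naturality of the equivalences $\ol{m_x}$ and $\ol{m_{x'}}$ and how they intertwine the pullback $f^*$ with the slice-level pullback along $\ol a$.
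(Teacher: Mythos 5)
Your overall strategy---identifying $\ul\Spec(A)$ with $\int \CCAlg_{\mcO_\mcX}(A,\mcO_\mcX)$ by matching fibres and transport against the presheaf/discrete-fibration dictionary, rather than by exhibiting an explicit functor as the paper does---is reasonable, but step (i) fails as stated, and the first sentence of (ii) with it. An object of $\ul\Spec(A)$ lying over $x$ is a pair $(u,\sigma)$ where $u : U \to \mcX$ is \emph{any} morphism of prestacks with $u(\id_U) = x$; the $2$--Yoneda lemma is an equivalence, not an isomorphism, and does not force $u$ to equal a single canonical $m_x$. Concretely, a functor $u$ with $u(\id_U)=x$ amounts to a coherent choice of Cartesian lift of every $g : V \to U$ with target $x$, and different cleavages produce genuinely distinct (though canonically isomorphic) functors. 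Hence the fibre $\pi^{-1}(x)$ is not a set but a setoid with many objects in each isomorphism class, and the lift of $a : x' \to x$ against $(w,\rho)$ is not unique: every choice of $v$ with $v(\id_V)=x'$ yields a distinct lift $(v,\, f^*\rho\circ(A\star\tilde a^{\op}))$. So $\pi$ is not a discrete fibration, and the characterization of $\int F$ as \emph{the} discrete fibration with fibres $F(x)$ cannot be applied to it directly.

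What your argument does establish is the statement for the full subcategory of $\ul\Spec(A)$ spanned by the objects $(m_x,\sigma)$, one chosen $m_x$ per $x \in \Ob{\mcX}$. To close the proof you must still show that the inclusion of this subcategory is an equivalence, i.e.\ that every $(u,\sigma)$ is isomorphic in $\ul\Spec(A)$ to some $(m_{u(\id_U)},\sigma')$; that is precisely the Cartesian-lift argument constituting the essential-surjectivity step of the paper's proof, which sidesteps the whole issue by building a functor $\Psi_A : \int\CCAlg_{\mcO_\mcX}(A,\mcO_\mcX) \to \ul\Spec(A)$ and verifying full faithfulness and surjectivity on objects. Your items (ii) and (iii) are otherwise sound and agree with the paper's computations: the bijection $\Hom_{\mcO_U}(m_x^*A,\mcO_U) \cong \CAlg_{\mcO_\mcX|_x}(A|_x,\mcO_\mcX|_x)$ via the slice equivalence is the paper's $p_{\mcX/x}^*$, and the compatibility of $\tau = f^*\rho\circ(A\star\tilde a^{\op})$ with the action $s'\mapsto s'\star\ol a$ is the commuting triangle the paper labels as its equation (3.2). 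The repair is therefore routine, but as written the claims that $\pi$ is a discrete fibration and that its fibres are sets are false.
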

\begin{proof}
We let $P$ denote $\CCAlg_{\mcO_\mcX}(A, \mcO_\mcX)$.
Let $(x, s)$ be an object of $\int P$ so that $x \in \Ob{\mcX}$,
$s \in P(x) = \CAlg_{\mcO_\mcX|_x}(A|_x, \mcO_\mcX|_x)$. That is,
$s : A|_x \to \mcO_\mcX|_x$ is a map of $\mcO_\mcX|_x$--algebras.
Let $U := p_\mcX(x)$, for any $v : V \to U \in \Aff_{/S}$, choose a Cartesian
lift $\wh{x}(v) : \tilde{x}(v) \to x$ in $\mcX$ of $v$ with target $x$.
By the universal property of Cartesian morphisms, for morphisms
$v : V \to U, v' : V' \to U$ and $g : V \to V'$ with $v' \circ g = v$,
we get a unique morphism $\wh{x}(g) : \tilde{x}(v) \to \tilde{x}(v')$
commuting with the maps $\wh{x}(v), \wh{x}(v')$. It is straightforward to verify
that the $\wh{x}(v)$ and $\wh{x}(g)$ assemble to a functor
$\wh{x} : \Aff_{/S/U} \to \mcX_{/x}$ over $\Aff/S$. This provides a map
of prestacks: $\tilde{x} := F_x \circ \wh{x} : U \to \mcX$, where
$F_x$ is the forgetful functor.
By pulling back, we get a map of $\mcO_U$--algebras
$\tilde{s} := \wh{x}^*s : \wh{x}^*A|_x = \tilde{x}^*A \to
\wh{x}^*\mcO_\mcX|_x = \mcO_U$, which is an object of $\ul\Spec(A)$ over $x$.

Now, consider a morphism $\phi : (x, s) \to (x', s')$ in $\int P$, so that
$\phi : x \to x'$ is a morphism in $\mcX$ such that $P(\phi)(s') = s$.
Let $U := p_\mcX(x), U' = p_\mcX(x'), f = p_\mcX(\phi) : U \to U'$.
We get maps $\tilde{x} : U \to \mcX, \tilde{x'} : U' \to \mcX$ as in the
previous paragraph, and $\phi$ gives a $2$--morphism of prestacks
$\tilde{\phi} : \tilde{x'} \circ f \To \tilde{x} : U \to \mcX$, by
the universal properties of the Cartesian lifts chosen.
We then have the following pasting of $2$--cells:
\begin{equation}
\begin{tikzcd}[column sep=huge]
U^\op
  \ar[dd, "f" left]
  \ar[rd, "\tilde{x}" above right, ""{below left, name={x}}]
  \ar[rrd, bend left, "A|_x", ""{name=AU, below left}] & & \\ &
|[alias=X]|\mcX^\op
  \ar[r, "\mcO_\mcX" description]
  \ar[from=AU, Rightarrow, "s", shorten=0.5em] &
\Set \\
(U')^\op
  \ar[ru, "\tilde{x'}" below right]
  \ar[rru, bend right, "A|_{x'}" below right, ""{name=AD, above left}]
  \ar[from=AD, to=X, Rightarrow, "s'", shorten=0.5em]
  \ar[to=x, Rightarrow, shorten=0.75em, "\tilde{\phi}"] & &
\end{tikzcd}
\end{equation}
which along with the hypothesis that $P(\phi)(s') = s$ shows the commutativity
of the diagram:
\begin{equation}\label{eqn:rel-spec-Gro-cons-1}
\begin{tikzcd}
\tilde{x}^*A
  \ar[rd, "\tilde{s}" below left]
  \ar[from=rr, "A \star \tilde{\phi}^\op" above] & &
f^*\tilde{x'}^*A \ar[ld, "\tilde{s'}" below right] \\
& \mcO_U &
\end{tikzcd}
\end{equation}
That is, $(f, \phi)$ is a morphism in $\ul\Spec(A)$.
The assignments:
\[
(x, s) \mapsto (\tilde{x}, \tilde{s}),
\phi \mapsto \phi
\]
assemble to a functor $\Psi_A : \int P \to \ul\Spec(A)$. To see this, we first
observe that for any two objects $(x, s), (x', s') \in \int P$ with
$\Psi_A(x, s) = (\tilde{x}, \tilde{s}),
\Psi_A(x', s') = (\tilde{x}, \tilde{s'})$, and $U = \dom(u), U' = \dom(u')$, we
have $x = \tilde{x}(\id_U), x' \cong \tilde{x'}(\id_{U'})$ by construction.
Then, by construction $\Hom_{\int P}((x, s), (x', s'))$ and
$\Hom_{\ul\Spec(A)}((\tilde{x}, \tilde{s}), (\tilde{x'}, \tilde{s'}))$ are both
subsets of $\Hom_\mcX(x, x')$ defined by the equivalent conditions
$P(\phi)(s') = s$ and \ref{eqn:rel-spec-Gro-cons-1}, and are hence
equal, while the mapping of morphisms induced by $\Psi_A$ is just the identity
on this subset. Thus, $\Psi_A$ respects composition and identities, and is, at
the same time, fully faithful.
This functor commutes with the maps to $\mcX$ by construction.

We will now see that this functor is essentially surjective.
Let $(u, \sigma)$ be an object of $\ul\Spec(A)$.
Then, we get an object
$x_u := u(\id_U) \in \Ob{\mcX}$ over $U$ and we have a functor
$p_{\mcX/x} : \mcX_{/x} \to \Aff_{/S/U} = U$ obtained by taking the slice of
$p_\mcX$ over $x$ --- it is defined by sending
$b : y \to x$ to $p_\mcX(b) : p_\mcX(y) \to U$ --- such that
$F_x = u \circ p_{\mcX/x}$. We then get a morphism of $\mcO_{\mcX}|_x$--algebras
$s_{u, \sigma} := p_{\mcX/x}^*\sigma = \sigma \star p_{\mcX/x} :
A|_x = p_{\mcX/x}^*u^*A \to \mcO_\mcX|_x = p_{\mcX/x}^*\mcO_U$.
Thus, $(x_u, s_{u, \sigma})$ is an object of $\int P$.
Now, $\tilde{x_u}(\id_U)$ is the domain of a Cartesian lift
$\wh{x_u}(\id_U) : \tilde{x_u}(\id_U) \to x_u$ over $\id_U : U \to U$,
which is an isomorphism since the fibre categories of $\mcX$ are groupoids.
However, since this was a choice of Cartesian lift to begin with, we could take
this choice to be $\id_{x_u}$. Then, letting
$\phi := \wh{x_u}(\id_U) = \id_{x_u} = \id_{u(\id_U)}$, we see from the
definition of the relative spectrum, that $\phi$ is the identity morphism
$\Psi_A(x_u, s_{u, \sigma}) \to (u, \sigma)$, showing that
$\Psi_A$ is strictly surjective objects.
\end{proof}

We will now focus on the properties of the relative spectrum that will be
necessary to prove our results.

\begin{prop}\label{prop:vec-bun-proj-affine}
In the context of \cref{defn:rel-spec}, $\pi_A$ is an affine morphism.
\end{prop}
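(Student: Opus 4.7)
The plan is to verify affineness via base change: for every morphism $t : T \to \mcX$ in $\PSt_{/S}$ with $T \in \Aff_{/S}$, I need to show that the fibre product $\ul\Spec_\mcX(A) \times_\mcX T$ is representable by an affine $T$--scheme.

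First, using \cref{prop:prest-prod-strict} to pass to the strict fibre product and unwinding \cref{defn:rel-spec}, an object of this fibre product over $V \in \Aff_{/S}$ is a pair $(f, \sigma)$, where $f : V \to T$ is an $S$--morphism and $\sigma : f^*(t^*A) \to \mcO_V$ is an $\mcO_V$--algebra morphism. Setting $B := t^*A$, this matches the description of an object of $\ul\Spec_T(B)$, so the fibre product is isomorphic over $T$ to $\ul\Spec_T(B)$. The problem thereby reduces to showing that $\ul\Spec_T(B)$ is affine over $T$ whenever $T$ is affine.

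Next, I would invoke \cref{prop:rel-spec-Gr-cons} to view $\ul\Spec_T(B)$ as the Grothendieck construction of a presheaf of sets on $T$, so that representability as a prestack reduces to representability of this presheaf by an affine $T$--scheme. I would then claim that $\ul\Spec_T(B)$ is represented by $\Spec(B(\id_T))$, where $B(\id_T)$ is naturally an $\mcO_T(\id_T)$--algebra. The forward map sends a $V$--point $(f, \sigma)$ to the $\mcO_T(\id_T)$--algebra map $B(\id_T) \to (f^*B)(\id_V) \to \mcO_V(\id_V)$ obtained by composing the canonical restriction along $f$ with the identity component $\sigma_{\id_V}$.

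The main obstacle will be inverting this assignment: showing that any $\mcO_T(\id_T)$--algebra morphism $B(\id_T) \to \mcO_V(\id_V)$ extends uniquely to a morphism $\sigma : f^*B \to \mcO_V$ of $\mcO_V$--algebra presheaves on $V$. This requires specifying $\sigma$ on every slice object $V' \to V$ compatibly with both naturality in $V'$ and the algebra structure; here I would appeal to \cref{prop:pullback-nat-OX-alg} to ensure the extension respects the $\mcO_V$--algebra structure, and use the terminal object $\id_V$ in $\Aff_{/S/V}$ together with the affineness of $T$ to pin down each component $\sigma_{V'}$ uniquely from its restriction data. Once the bijection is established and shown to be natural in $V$, the representing affine $T$--scheme $\Spec(B(\id_T))$ witnesses that $\pi_A$ is affine.
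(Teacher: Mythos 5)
Your overall strategy coincides with the paper's: base-change along an arbitrary $t : T \to \mcX$ with $T$ affine, identify the resulting fibre with a relative spectrum over $T$, and exhibit that as an affine scheme. The paper obtains the first identification from the compatibility of the Grothendieck construction with $2$--pullbacks (via \cref{prop:rel-spec-Gr-cons}) and then simply cites the scheme-theoretic relative spectrum; you unwind the fibre product by hand and then attempt to prove representability of $\ul\Spec_T(B)$ by $\Spec(B(\id_T))$ directly. The reduction step is essentially correct (though \cref{prop:prest-prod-strict} does not literally apply, since the base of your fibre product is $\mcX$ rather than $S$ and its fibre categories need not be discrete; one should argue up to equivalence, as the paper does).

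The final step, however, has a genuine gap, and it is exactly the one you flag as ``the main obstacle.'' For a morphism $\sigma : f^*B \to \mcO_V$ of algebra presheaves on $\Aff_{/S/V}$ and an object $g : V' \to V$, naturality against the unique morphism $g \to \id_V$ determines $\sigma_g : B(f \circ g) \to \Gamma(V')$ only on the image of the restriction map $B(f) \to B(f \circ g)$. Unless $B(f \circ g)$ is generated as a $\Gamma(V')$--algebra by that image --- equivalently, unless the canonical map $B(f) \otimes_{\Gamma(V)} \Gamma(V') \to B(f \circ g)$ is surjective (bijective, for the existence direction) --- the component $\sigma_g$ is neither determined by nor constructible from $\sigma_{\id_V}$, so your comparison with the $V$--points of $\Spec(B(\id_T))$ is neither injective nor surjective. \cref{defn:rel-spec} allows an arbitrary presheaf of $\mcO_\mcX$--algebras, for which this cartesian/quasi-coherence condition can fail, and neither the terminal object of $\Aff_{/S/V}$ nor the affineness of $T$ supplies it. (To be fair, the paper's own proof quietly makes the same assumption by invoking ``the usual relative spectrum defined for schemes,'' which exists only for quasi-coherent sheaves of algebras; in every application in the paper one has $A = \Sym(M^\vee)$ with $M$ finite locally free, where the condition holds.) Your argument closes once you add, and actually use, such a quasi-coherence hypothesis on $A$; as written, the mechanism you propose does not close the gap.
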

\begin{proof}
Let $f : U \to \mcX$ be any morphism where $U \in \Aff_{/S}$.
We note that the $2$--fibre
product of prestacks is simply the $2$--fibre product in the slice $2$--category
$\Cat_{/\Aff_{/S}}$
\cite[\href{https://stacks.math.columbia.edu/tag/0041}{Lemma 0041}]
{stacks-project}, which is, in turn, just the $2$--fibre product in $\Cat$.
Since the Grothendieck construction is compatible with $2$--pullback in $\Cat$,
$\int A \circ f$ is equivalent as a category to the $2$--fibre product
of categories $\int A \times^{\Cat}_\mcX \Aff_{/S/U}$.
By \cref{prop:rel-spec-Gr-cons}, $\ul\Spec(A) = \int A$ and thus, the fibre
product of prestacks $\ul\Spec(A) \times_\mcX U = \int A \times_\mcX U$
is, in fact, $\int A \circ f$. We can then unwrap the definition of
$\int A \circ f$ to see that it is the usual relative spectrum defined for
schemes in
\cite[\href{https://stacks.math.columbia.edu/tag/01LQ}{Section 01LQ}]
{stacks-project}, which is an affine scheme over $U$ and hence an affine scheme
over $S$.
\end{proof}

\begin{prop}\label{prop:geom-vec-bun-Gro-cons}
In the context of \cref{defn:rel-spec}, for any finite locally free
$\mcO_\mcX$--module $M$, $\pi : \ul\Spec(\Sym(M^\vee)) \to \mcX$ is the
Grothendieck construction $\int M \to \mcX$ of the functor
$M : \mcX^\op \to \Set$.
\end{prop}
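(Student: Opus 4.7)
The plan is to combine \cref{prop:rel-spec-Gr-cons} with a natural isomorphism of set-valued presheaves $\CCAlg_{\mcO_\mcX}(\Sym(M^\vee), \mcO_\mcX) \cong M$ on $\mcX$. By \cref{prop:rel-spec-Gr-cons}, $\pi_{\Sym(M^\vee)}$ is already exhibited as the Grothendieck construction $\int \CCAlg_{\mcO_\mcX}(\Sym(M^\vee), \mcO_\mcX) \to \mcX$, and the Grothendieck construction sends a natural isomorphism of presheaves to a strict isomorphism of categories over $\mcX$. So once the identification of presheaves is established, the statement follows.

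For the identification, I would work pointwise at each $x \in \Ob{\mcX}$ and then verify naturality in $x$. Pointwise, the key tool is the $\Sym \dashv \text{forget}$ adjunction between $\mcO_\mcX|_x$--modules and $\mcO_\mcX|_x$--algebras, together with the fact that the restriction functor $(-)|_x$ commutes with $\Sym$ (because $\Sym$ is built from tensor products and colimits, both preserved by pullback of module presheaves, as recorded in the Remark preceding \cref{prop:pullback-nat-OX-alg}). This yields
\[
\CCAlg_{\mcO_\mcX}(\Sym(M^\vee), \mcO_\mcX)(x)
  = \CAlg_{\mcO_\mcX|_x}(\Sym(M^\vee|_x), \mcO_\mcX|_x)
  \cong \Hom_{\mcO_\mcX|_x}(M^\vee|_x, \mcO_\mcX|_x).
\]
The right-hand side is $\HHom_{\mcO_\mcX|_x}(M^\vee|_x, \mcO_\mcX|_x)(\id_x)$, and since $M^\vee$ is finite locally free whenever $M$ is, \cref{prop:pullback-Hom} identifies this sheaf with $\HHom_{\mcO_\mcX}(M^\vee, \mcO_\mcX)|_x$, whose value at $\id_x$ is $M^{\vee\vee}(x)$. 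Finally, the canonical biduality morphism $M \to M^{\vee\vee}$ is an isomorphism on finite locally free modules, yielding the desired identification $\CCAlg_{\mcO_\mcX}(\Sym(M^\vee), \mcO_\mcX)(x) \cong M(x)$.

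Intrinsically, under this composite the element of $M(x)$ attached to an algebra homomorphism $s : \Sym(M^\vee|_x) \to \mcO_\mcX|_x$ is the image under biduality of its restriction $s|_{M^\vee|_x} : M^\vee|_x \to \mcO_\mcX|_x$. Viewed this way, naturality in $x$ reduces to three separate naturalities: of the $\Sym \dashv \text{forget}$ adjunction, of restriction along the slice forgetful functor $\mcX_{/x'} \to \mcX_{/x}$ induced by a morphism $\phi : x' \to x$, and of the biduality map. The main obstacle I anticipate is purely bookkeeping: tracking how $\CCAlg_{\mcO_\mcX}(\Sym(M^\vee), \mcO_\mcX)(\phi)$ — which is defined via the horizontal composition with $\ol\phi$ on $\mcX_{/x}$ — matches the map $M(\phi)$ under the three-step identification above. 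Organizing the proof around the intrinsic "evaluate on generators and biduality" description should keep this straightforward; assembling everything then promotes the pointwise bijections into a natural isomorphism of presheaves, completing the argument.
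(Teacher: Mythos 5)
Your proposal is correct and follows essentially the same route as the paper: invoke \cref{prop:rel-spec-Gr-cons} and then identify $\CCAlg_{\mcO_\mcX}(\Sym(M^\vee), \mcO_\mcX)(x)$ with $M(x)$ via the chain ``restriction commutes with $\Sym$'', the $\Sym\dashv\text{forget}$ adjunction, and biduality for finite locally free modules. The paper is terser (it does not spell out the naturality check and does not route through \cref{prop:pullback-Hom}, since $\Hom_{\mcO_\mcX|_x}(M^\vee|_x,\mcO_\mcX|_x)$ is already $(M^\vee)^\vee(x)$ by definition of the internal Hom), but the argument is the same.
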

\begin{proof}
We notice that since pullbacks for large \'etale sites are exact, they commute
with direct sums. They also commute with tensor products and colimits. Hence,
they commute with taking the symmetric algebra of a sheaf of modules over the
structure sheaf. The finite locally free hypothesis ensures
$(M^\vee)^\vee \cong M$. Thus, we have:
\begin{align*}
     & \CAlg_{\mcO_\mcX|_x}(\Sym(M^\vee)|_x, \mcO_\mcX|_x) \\
\cong& \CAlg_{\mcO_\mcX|_x}(\Sym(M^\vee|_x), \mcO_\mcX|_x) \\
\cong& \Hom_{\mcO_\mcX|_x}(M^\vee|_x, \mcO_\mcX|_x) \\
\cong& (M^\vee)^\vee(x) \\
\cong& M(x)
\end{align*}
This along with \cref{prop:rel-spec-Gr-cons} now yields the result.
\end{proof}

\begin{notn}
For any $\mcX \in \PSt_{/S}$ and any $\mcO_\mcX$--module $M$, we denote
$\ul\Spec_\mcX(\Sym(M^\vee))$ by $\bV M$.
\end{notn}

\begin{cns}\label{cns:sec-to-morphism}
For any morphism of prestacks $f : \mcY \to \mcZ$ in $\PSt_{/S}$, any
$\mcO_\mcZ$--modules $E, F$, consider
a strict factorization of $f$ as $\mcY \to[t] \bV[E, F] \to[\pi] \mcZ$, that is
$f = \pi \circ t$.
For any object $y \in \mcY$,
$t(y) = (f(y), s(y))$ for some
$s(y) \in [E, F](f(y)) = \Hom_{\mcO_\mcX|_{f(y)}}(E|_{f(y)}, F|_{f(y)})$ by
\cref{prop:geom-vec-bun-Gro-cons}.
We then observe that $f^*E(y) = E(f(y)) = E|_{f(y)}(\id_y)$ and, similarly,
for $F$ and $\mcO_\mcX|_{f(y)}$ so that
$s(y)_{\id_y}$ is an $\mcO_\mcY(y) = \mcO_\mcX|_{f(y)}(\id_y)$--linear map
$f^*E(y) \to f^*F(y)$.
We denote the collection of morphisms $\set{s(y)_{\id_y}}_{y \in \Ob{\mcY}}$
as $\sigma(t)$.
\end{cns}

\begin{prop}\label{prop:sec-to-morphism}
In the context of \cref{cns:sec-to-morphism}, the collection $\sigma(t)$ constitutes
a natural transformation of functors and is hence a morphism of
$\mcO_\mcY$--modules.
\end{prop}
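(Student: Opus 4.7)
The plan is to unpack $t$ via the Grothendieck construction description of $\bV[E,F]$ supplied by \cref{prop:geom-vec-bun-Gro-cons}, and then read off the naturality square of $\sigma(t)$ from the naturality of the individual transformations $s(y)$.

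First, I would describe $t(\phi)$ concretely for an arbitrary morphism $\phi : y \to y'$ in $\mcY$. Under the identification $\bV[E,F] \simeq \int [E,F]$, a morphism $t(y) \to t(y')$ is a morphism $a : f(y) \to f(y')$ in $\mcZ$ satisfying $[E,F](a)(s(y')) = s(y)$, where $[E,F](a)$ is the horizontal composite $s(y') \star \ol{a}^{\op}$ with the slice functor $\ol{a} : \mcZ_{/f(y)} \to \mcZ_{/f(y')}$. Since $\pi \circ t = f$ strictly, $a$ must equal $f(\phi)$. Evaluating the equality $s(y) = s(y') \star \ol{f(\phi)}^{\op}$ at $\id_{f(y)} \in \mcZ_{/f(y)}$, and noting that $\ol{f(\phi)}(\id_{f(y)}) = f(\phi)$, yields the key identity
\[
s(y)_{\id_{f(y)}} = s(y')_{f(\phi)}.
\]

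Second, I would invoke the naturality of the transformation $s(y') : E|_{f(y')} \To F|_{f(y')}$ itself. In $\mcZ_{/f(y')}$ the arrow $f(\phi) : f(y) \to f(y')$ defines a morphism from the object $f(\phi)$ to the object $\id_{f(y')}$, and naturality on this morphism gives the commutative square
\[\begin{tikzcd}
E(f(y')) \ar[r, "s(y')_{\id_{f(y')}}"] \ar[d, "E(f(\phi))" left] & F(f(y')) \ar[d, "F(f(\phi))"] \\
E(f(y)) \ar[r, "s(y')_{f(\phi)}" below] & F(f(y))
\end{tikzcd}\]
Substituting the identity from the previous step, the bottom arrow becomes $\sigma(t)_y$ and the top arrow is $\sigma(t)_{y'}$, while the vertical arrows are exactly $(f^*E)(\phi^{\op})$ and $(f^*F)(\phi^{\op})$. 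This is precisely the naturality square of $\sigma(t) : f^*E \To f^*F$ at $\phi$. The $\mcO_\mcY$-linearity of each component $\sigma(t)_y = s(y)_{\id_{f(y)}}$ is then immediate, since $s(y)$ is a morphism of $\mcO_\mcZ|_{f(y)}$-modules, so in particular its component at $\id_{f(y)}$ is linear over $\mcO_\mcZ|_{f(y)}(\id_{f(y)}) = \mcO_\mcZ(f(y)) = \mcO_\mcY(y)$.

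The main obstacle is pure bookkeeping: keeping track of the contravariant conventions in the Grothendieck construction of \cref{prop:geom-vec-bun-Gro-cons} and correctly translating a morphism of $\int [E,F]$ into the pullback relation among the internal-Hom components. Once the key identity $s(y)_{\id_{f(y)}} = s(y')_{f(\phi)}$ is in hand, the naturality square is forced by naturality of $s(y')$, and module-linearity is built into the definition of the objects of $\bV[E,F]$.
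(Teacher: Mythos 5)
Your proposal is correct and follows essentially the same route as the paper's proof: both extract the key identity $s(y)_{\id_{f(y)}} = s(y')_{f(\phi)}$ from the morphism condition in the Grothendieck construction $\int[E,F]$, and then obtain the naturality square of $\sigma(t)$ by applying naturality of $s(y')$ to the arrow $f(\phi) \to \id_{f(y')}$ in the slice $\mcZ_{/f(y')}$. The concluding remark on $\mcO_\mcY$-linearity of the components matches the paper's as well.
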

\begin{proof}
For any morphism $b : y \to y'$, the map
\[
f^*[E, F](y') = [E, F](f(y')) \to f^*[E, F](y) = [E, F](f(y))
\]
is given by sending a natural transformation
\[
a : E|_{f(y')} \To F|_{f(y')} : \mcZ_{/f(y')} \to \mcZ \to \Set
\]
to the horizontal composite $a \star (f(b) \circ -)$, where $f(b) \circ -$
is the functor $\mcZ_{/f(y)} \to \mcZ_{/f(y')}$ given by post-composing with the
morphism $b : y \to y'$. This horizontal composite is shown below:
\[\begin{tikzcd}[row sep=tiny]
& &
\mcZ
  \ar[rd, "E"]
  \ar[dd, Rightarrow, "a", shorten=0.5em] & \\
\mcZ_{/f(y)} \ar[r, "f(b) \circ -"] &
\mcZ_{/f(y')}
  \ar[ru, "F_{f(y')}" above left]
  \ar[rd, "F_{f(y')}" below left] & &
\Set \\ & &
\mcZ
  \ar[ru, "F" below right] & \\
\end{tikzcd}\]
Unwrapping the definition for horizontal composition, we see that for an object
$q : v \to f(y)$ in $\mcZ_{/f(y)}$, the component
$(s(y') \star (f(b) \circ -))_{q}$
is $s(y')_{f(b) \circ q}$. Viewing $f(b) : f(y) \to f(y')$ as a morphism
in $\mcZ_{/f(y')}$ from the object $f(b)$ to $\id_{f(y')}$, we have the
following commutative diagram by the naturality of $s(y')$:
\[\begin{tikzcd}[column sep=huge]
E|_{f(y')}(\id_{f(y')})
  \ar[r, "s(y')_{\id_{y'}}"]
  \ar[d, "E|_(f(y'))(f(b))" left] &
F|_{f(y')}(\id_{f(y')})
  \ar[d, "F|_{f(y')}(f(b))"] \\
E|_{f(y')}(f(b))
  \ar[r, "s(y')_{f(b)}" below] &
F|_{f(y')}(f(b))
\end{tikzcd}\]
We observe that $E|_{f(y')}(f(b)) = E(f(y))$, $F|_{f(y')}(f(b)) = F(f(b))$.
By the fact that $s(b) : (f(y), s(y)) \to (f(y'), s(y'))$ is a morphism
in $\int [E, F]$, we obtain that
$s(y')_{f(b)} = (s(y') \star (f(b) \circ -)) = s(y)_{\id_y}$. Thus, the above
commutative square becomes:
\[\begin{tikzcd}
E(f(y))
  \ar[r, "s(y')_{\id_{y'}}"]
  \ar[d, "E|_(f(y'))(f(b))" left] &
F(f(y'))
  \ar[d, "F|_{f(y')}(f(b))"] \\
E(f(y))
  \ar[r, "s(y)_{\id_{y}}" below] &
F(f(y))
\end{tikzcd}\]
Thus, the collection $\set{s(y)_{\id_{y}}}_{y \in \Ob{\mcY}}$
form a natural transformation of functors $f^*E \to f^*F$, which is
$\mcO_{\mcY}$--linear, by construction.
\end{proof}


\begin{prop}\label{prop:section-nat}
In the context of \cref{cns:sec-to-morphism},
for any two maps $t, t' : \mcY \to \bV[E, F]$ and any $2$--morphism of prestacks
$\beta : t \To t'$, the following diagram of presheaves commutes:
\[\begin{tikzcd}[column sep=huge]
(\pi \circ t)^*E \ar[r, "(E \circ \pi) \star \beta"] \ar[d, "\sigma(t)" left] &
(\pi \circ t')^*E \ar[d, "\sigma(t')"] \\
(\pi \circ t)^*F \ar[r, "(F \circ \pi) \star \beta" below] & (\pi \circ t')^*F
\end{tikzcd}\]
\end{prop}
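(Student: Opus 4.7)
The plan is to verify the equation pointwise at each $y \in \mcY$ and to recognize it as an instance of the naturality of the section packaged in $t'(y)$.

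First, I unpack the data using \cref{prop:rel-spec-Gr-cons} and \cref{prop:geom-vec-bun-Gro-cons}, which identify $\bV[E,F]$ with the Grothendieck construction of $[E,F] := \HHom_{\mcO_\mcZ}(E,F) : \mcZ^\op \to \Set$. Writing $f := \pi \circ t$ and $f' := \pi \circ t'$, this lets me present $t(y) = (f(y), s(y))$ and $t'(y) = (f'(y), s'(y))$, where $s(y), s'(y)$ are the natural transformations from \cref{cns:sec-to-morphism}. Since $\beta$ is a $2$--morphism in $\PSt_{/S}$, the component $\beta_y$ lies over $\id_{p_\mcY(y)}$, so $b_y := \pi(\beta_y) : f(y) \to f'(y)$ sits in the fibre groupoid of $\mcZ$ over $p_\mcY(y)$ and is in particular invertible. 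By \cref{defn:rel-spec} the morphism $\beta_y$ must satisfy the compatibility $[E,F](b_y)(s'(y)) = s(y)$, equivalently the equality of natural transformations $s(y) = s'(y) \star \bar{b}_y$ where $\bar{b}_y : \mcZ_{/f(y)} \to \mcZ_{/f'(y)}$ is post-composition with $b_y$.

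Evaluating this equality at $\id_{f(y)}$ yields $\sigma(t)_y = s(y)_{\id_{f(y)}} = s'(y)_{b_y}$. Next, I regard $b_y$ as a morphism in $\mcZ_{/f'(y)}$ from $b_y$ to $\id_{f'(y)}$ and apply the naturality of $s'(y) : E|_{f'(y)} \To F|_{f'(y)}$ along this morphism to obtain
\[
F(b_y) \circ \sigma(t')_y \;=\; s'(y)_{b_y} \circ E(b_y) \;=\; \sigma(t)_y \circ E(b_y).
\]
Since $E(b_y)$ and $F(b_y)$, up to the inverses furnished by the invertibility of $b_y$, realize the $y$--components of the horizontal composites $(E \circ \pi) \star \beta$ and $(F \circ \pi) \star \beta$, this is precisely the component at $y$ of the square I need to commute.

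Naturality in $y$ is automatic: both $\sigma(t), \sigma(t')$ are morphisms of presheaves by \cref{prop:sec-to-morphism} and the horizontal composites are natural transformations by construction, so the pointwise check above is enough. The only real obstacle is bookkeeping — tracking the various slice categories $\mcZ_{/f(y)}, \mcZ_{/f'(y)}$ and the direction conventions for horizontal composition of a presheaf with a $2$--cell — but this is resolved by the observation that $b_y$ is invertible, removing any genuine sign or direction ambiguity.
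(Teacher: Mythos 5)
Your proof is correct and follows essentially the same route as the paper's: identify $t(y)=(f(y),s(y))$, $t'(y)=(f'(y),s'(y))$ via the Grothendieck-construction description of $\bV[E,F]$, extract $s(y)=s'(y)\star(\pi(\beta_y)\circ -)$ from the morphism condition on $\beta_y$, and then apply naturality of $s'(y)$ along $\pi(\beta_y)$ viewed as a morphism to $\id_{f'(y)}$ in the slice $\mcZ_{/f'(y)}$. Your explicit remark that $\pi(\beta_y)$ is invertible (being a fibre-groupoid morphism) to settle the direction of the horizontal arrows is a point the paper leaves implicit, but the argument is the same.
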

\begin{proof}
We denote $x := \pi \circ t, x' = \pi \circ t'$.
Let $y \in \Ob{\mcY}$. Then, $t(y) = (x(y), s(y))$ and $t'(y) = (x'(y), s'(y))$.
We then have a morphism $\beta_y : (x(y), s(y)) \to (x'(y), s'(y))$
in $\bV[E, F]$ so that $\beta_y$ is a morphism $x(y) \to x(y')$ in $\mcX$
such that $s'(y)_{\beta_y} = s'(y) \star (\beta_y \circ -) = s(y)$.
By considering $\beta_y$ as morphism $\beta_y \to \id_{x'(y)}$ in $\mcX$,
we obtain the following commutative diagram:
\[\begin{tikzcd}[column sep=huge]
E(x'(y)) = E|_{x'(y)}(\id_{x'(y)})
  \ar[r, "s(y')_{\id_{x'(y)}}"]
  \ar[d, "E(\pi(\beta_y)) = E|_{x(y)}(\beta_y)" left] &
F|_{x'(y)}(\id_{x'(y)}) = F(x'(y))
  \ar[d, "F|_{x'(y)}(\beta_y) = F(\pi(\beta_y))"] \\
E(x(y)) = E|_{x'(y)}(\beta_y)
  \ar[r, "s'(y)_{\beta_y} = s(y)_{\id_{x(y)}}" below] &
F|_{x'(y)}(\beta_y) = F(x(y))
\end{tikzcd}\]
which is precisely the diagram whose commutativity we need to show for each such
$y$.
\end{proof}


\section{Arrow Bundles}\label{sec:ArrowBun}

We will now revisit the moduli stack of vector bundle triples or
``arrow bundles'' first constructed in \cite{ModQuivBun},
and give two concrete descriptions thereof that will be
used to prove the algebraicity of the moduli stacks of connection triples
in \cref{sec:ArrowBunFormalGrpd}.

\subsection{Moduli Prestack of Arrow Bundles}

We will define the moduli prestack of arrow bundles first in the intuitive
way --- that is, as the prestack associated to a functor that assigns
to a scheme $U$, the groupoid of vector bundles on $U \times \mcX$ and whose
action on morphisms $U \to U'$ is given by pullback. This is in contrast
to \cite{ModQuivBun}, where we take the more ``basis-free'' route of using
mapping stacks. The main point is that the two definitions give equivalent
prestacks, which we show in the next subsection. To this end, we first fix some
notation that will be used in the rest of the paper.

\begin{notn}
Let $\mcX, \mcY$ be prestacks over $S$ and $E, F$, any two
$\mcO_\mcY$--modules. We will, then, use the following notation:
\begin{figure}[H]
\begin{tabularx}{\textwidth}{l c l}
$[E, F]_\square$ & : & the $\mcO_{\mcY \times \mcY}$--module
  $\HHom_{\mcO_{\mcY}}(pr_1^*E, pr_2^*F)$ \\
$\mcE(\mcX)$ & : & the universal locally free sheaf on
  $\mcY = \sB(\mcX) = \sM(\mcX) \times \mcX$ \\
$\mcE^n(\mcX)$ & : & the restriction of $\mcE(\mcX)$ to
  $\mcY = \sB^n(\mcX) = \sM^n(\mcX) \times \mcX$ \\
$\mcV(\mcX)$ & : & the $\mcO_{\sB(\mcX) \times \sB(\mcX)}$--module
  $[\mcE(\mcX), \mcE(\mcX)]_\square$ \\
$\mcV^{n, m}(\mcX)$ & : & the $\mcO_{\sB^n \times \sB(\mcX)^m}$--module
  $[\mcE^n(\mcX), \mcE^m(\mcX)]_\square$ \\
$\sV(\mcX)$ & : & $\bV\mcV(\mcX) = \bV [\mcE(\mcX), \mcE(\mcX)]_\square$ \\
$\sV^{n, m}(\mcX)$ & : &
  $\bV\mcV^{n, m}(\mcX) = \bV [\mcE^n(\mcX), \mcE^m(\mcX)]_\square$  \\
$f^<$ & : & the pullback functor $(f \times \id_\mcX)^*$ \\
$f_>$ & : & the pushforward functor $(f \times \id_\mcX)_*$ \\
$r^\lhd$ & : & the pullback functor $(r \times \Delta_X)^*$ \\
$r_\rhd$ & : & the pushforward functor $(r \times \Delta_X)_*$
\end{tabularx}
\end{figure}
\end{notn}

\begin{cns}\label{cns:mod-prest-arr-bun}
Given any prestack $\mcX \in \PSt_{/S}$, we define a category
$\sM_1(\mcX)$ with the following data:
\begin{itemize}
\item Objects are tuples $(u, E, F, s)$, where:
  \begin{itemize}
  \item $u : U \to S$ is an object of $\Aff_{/S}$
  \item $E, F \in \Vect(U \times \mcX)$ are finite locally free
    $\mcO_\mcX$--modules on $U \times \mcX$
  \item $s : E \to F \in \Hom_{\mcO_{U \times \mcX}}(E, F)$ is a morphism of
    $\mcO_\mcX$--modules
  \end{itemize}
\item Morphisms $(u, E, F, s) \to (u', E', F', s')$ are tuples
$(f, a, b)$ where:
  \begin{itemize}
  \item $f : U = \dom(u) \to U' = \dom(u')$ is a morphism in $\Aff_{/S}$
  \item $a : f^<E' \to E, b : f^<F' \to F$ are isomorphisms of
  $\mcO_{U \times \mcX}$--modules making the following diagram commute in
  $\Vect(U \times \mcX)$:
  \[\begin{tikzcd}
  f^<E' \ar[r, "a"] \ar[d, "f^<s'" left] & E \ar[d, "s"] \\
  f^<F' \ar[r, "b" below] & F
  \end{tikzcd}\]
  \end{itemize}
\item Given two morphisms
$(u, E, F, s) \to[{(f, a, b)}] (u', E', F', s') \to[{(g, c, d)}]
(u'', E'', F'', s'')$, the composite is defined to be
$(g \circ f, f^<c \circ a, f^< \circ b)$ noticing that the following
diagram commutes in $\Vect(U \times \mcX)$:
  \[\begin{tikzcd}
  (g \circ f)^<E'' = f^<g^<E''
    \ar[r, "f^<c"]
    \ar[d, "{(g \circ f)^<s''} = f^<g^<s''" left] &
  f^<E' \ar[r, "a"] \ar[d, "f^<s'" left] &
  E \ar[d, "s"] \\
  (g \circ f)^<F'' = f^<g^<F'' \ar[r, "f^<d" below] &
  f^<F' \ar[r, "b" below] &
  F
  \end{tikzcd}\]
\item The identity morphism of an object $(u, E, F, s)$ is the tuple
$(\id_U, \id_E, \id_F)$.
\end{itemize}
\end{cns}

\begin{prop}
The category $\sM_1(\mcX)$ of \cref{cns:mod-prest-arr-bun} has a functor
to $\Aff_{/S}$ defined by the mapping:
\[\begin{array}{ccc}
(u, E, F, s) &\mapsto& (u : U \to S) \\
(f, a, b) &\mapsto& f : U \to U'
\end{array}\]
making it a prestack over $S$.
\end{prop}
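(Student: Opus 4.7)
The plan is to verify two things: first, that the given assignment defines a functor $p : \sM_1(\mcX) \to \Aff_{/S}$; second, that this functor exhibits $\sM_1(\mcX)$ as a category fibred in groupoids over $\Aff_{/S}$. Functoriality is nearly immediate from the composition law in \cref{cns:mod-prest-arr-bun}: the composite of two morphisms has first component the composite of first components by construction, and the identity morphism $(\id_U, \id_E, \id_F)$ on $(u, E, F, s)$ has first component $\id_U$, so $p$ respects both composition and identities.

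For the fibration property, I would first show that every morphism in the fibre over $U$ --- that is, every morphism of the form $(\id_U, a, b) : (u, E, F, s) \to (u, E', F', s')$ --- is invertible. Here $a$ and $b$ are already required to be isomorphisms of $\mcO_{U \times \mcX}$--modules, so the candidate $(\id_U, a^{-1}, b^{-1}) : (u, E', F', s') \to (u, E, F, s)$ is well-formed; the commutativity condition $s' \circ a^{-1} = b^{-1} \circ s$ follows by rearranging the hypothesis $s \circ a = b \circ s'$ (using $\id_U^< s = s$ and $\id_U^< s' = s'$), and the composition formula shows that this inverse is two-sided. The main remaining step is producing a Cartesian lift of an arbitrary morphism $f : U \to U'$ in $\Aff_{/S}$ with chosen target $(u', E', F', s')$ over $U'$: I would take the source to be $(u' \circ f, f^<E', f^<F', f^<s')$ and the lift to be $(f, \id_{f^<E'}, \id_{f^<F'})$, for which the defining commutative square is tautological.

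To verify the universal property of this lift, given any morphism $(g, c, d) : (v, G, H, t) \to (u', E', F', s')$ over $g : V \to U'$ together with a factorization $g = f \circ h$, I would use the identity $(f \circ h)^< = h^< f^<$ to reinterpret $c : g^<E' \to G$ and $d : g^<F' \to H$ as maps $h^<f^<E' \to G$ and $h^<f^<F' \to H$; the unique lift to $(u' \circ f, f^<E', f^<F', f^<s')$ is then $(h, c, d)$, and its composition with $(f, \id, \id)$ recovers $(g, c, d)$ by a direct calculation using the composition formula, with uniqueness forced by the same equation.

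The main subtle point is the strict compatibility of the operation $(-)^<$ with composition that is needed in the last step. Since $(-)^<$ is pullback along $f \times \id_\mcX$ and pullback of (pre)sheaves on prestacks is given by precomposition (as noted in the discussion preceding \cref{prop:pullback-nat-OX-alg}), one can choose a fixed strict model of fibre products in $\PSt_{/S}$ --- consistent with \cref{prop:prest-prod-strict} --- in which this compatibility holds on the nose, so the argument above is literal. In any model failing strictness, the same proof goes through after inserting the canonical coherence isomorphisms, which do not affect the structure of the universal property.
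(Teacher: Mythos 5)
Your proof is correct, but it takes a genuinely more hands-on route than the paper. The paper's own proof is a two-line reduction: it observes that $\sM_1(\mcX)$ is, by inspection, the Grothendieck construction (unstraightening) of the groupoid-valued pseudofunctor $U \mapsto \Fun(\Delta^1, \Vect(U \times \mcX))^\simeq$, $f \mapsto f^< \circ -$, and then appeals to the general fact that such an unstraightening is a category fibred in groupoids, omitting all details. You instead verify the fibration axioms directly: functoriality of the projection, invertibility of fibre morphisms, existence of the Cartesian lift $(f, \id_{f^<E'}, \id_{f^<F'})$ with source $(u' \circ f, f^<E', f^<F', f^<s')$, and its universal property. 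Unwinding the paper's argument produces exactly these checks (the canonical Cartesian lifts in a Grothendieck construction are precisely the morphisms with identity second and third components), so the two proofs have the same mathematical content. What yours buys is self-containedness and an explicit acknowledgement of where the strict identity $(g \circ f)^< = f^< \circ g^<$ is needed --- a point the paper also relies on silently when it writes $(g \circ f)^<E'' = f^<g^<E''$ in the composition law of \cref{cns:mod-prest-arr-bun}. What the paper's version buys is brevity and visibility of the underlying pseudofunctor $U \mapsto \Fun(\Delta^1, \Vect(U \times \mcX))^\simeq$, which is conceptually aligned with how the rest of the paper manipulates these prestacks. One cosmetic caution: the composite of $(f,a,b)$ and $(g,c,d)$ has second component the map $(g \circ f)^<E'' \to E$ obtained by following the paper's displayed rectangle, i.e.\ $a$ after $f^<c$; your inverse and universal-property computations are consistent with this reading, so nothing breaks, but it is worth stating the order once to avoid ambiguity with the paper's in-line notation.
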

\begin{proof}
That the mapping defines a functor is straightforward to check. That
this defines a prestack over $S$ follows from the observation that it
is the Grothendieck construction or unstraightening of the contravariant
functor:
\[\begin{array}{ccc}
U &\mapsto& \Fun(\Delta^1, \Vect(U \times \mcX))^\simeq \\
(U \to[f] U') &\mapsto& f^< \circ -
\end{array}\]
We omit the details.
\end{proof}

\begin{rmk}
If $\mcX$ is a stack, then so is $\sM_1(\mcX)$, but we will not show this
directly, although one could argue that this follows from the descent of
morphisms of sheaves.
Instead, we will produce an equivalence with another prestack
whose descent property is easier to show from definitions whenever $\mcX$ is
a stack. The reason for this
indirection is that this equivalence will be necessary in proving our
main results, so we address it first.
\end{rmk}

\begin{defn}[Moduli Prestack of Arrow Bundles]
We will call the prestack $\sM_1(\mcX)$ of \cref{cns:mod-prest-arr-bun},
the moduli prestack of arrow bundles on $\mcX$.
\end{defn}

\subsection{Definition via Mapping Stacks}

We now revisit the definition of the moduli of arrow bundles given in
\cite{ModQuivBun} and show that it gives a prestack equivalent to
\cref{cns:mod-prest-arr-bun}. To achieve this, we will give a concrete
description of this prestack that was not given before. There are two
main purposes of this definition. First, it is easier to show the algebraicity
of this stack using results about the algebraicity of mapping stacks. Second,
it can be easily generalized to the setting
of derived stacks, where we eventually wish to work,
without having to explicitly define the functor of points.
The work in this subsection should be considered as evidence for the soundness
of the definition as well as a proof of algebraicity of the slightly more
intuitive definition (\cref{cns:mod-prest-arr-bun}) via the functor of points.
At the same time, as we will discuss in \cref{subsec:NonAbHodge}
it proves some of the claims made in that paper with only sketches of proofs,
We begin by recalling the definition of mapping prestacks.

\begin{defn}[Mapping Prestacks {\cite[\S 3]{Wang-BunG}}]
Given two prestacks $\mcY, \mcZ \in \PSt_{/S}$, we define the category
$\Map_S(\mcY, \mcZ)$ as follows:
\begin{itemize}
\item Objects are tuples $(u, f)$ where:
  \begin{itemize}
  \item $u : U \to S$ is an object of $\Aff_{/S}$
  \item $f : U \times_S \mcY \to \mcZ$ is a $1$--morphism in $\PSt_{/S}$
  \end{itemize}
\item Morphisms $(u, f) \to (u', f')$ are tuples $(g, a)$ where:
  \begin{itemize}
  \item $g : U = \dom(u) \to U' = \dom(u')$ is a morphism in $\Aff_{/S}$
  \item $a : f' \circ (g \times \id_\mcY) \To f : U \times_S \mcY \to \mcZ$
    is a $2$--morphism in $\PSt_{/S}$
  \end{itemize}
\item Composition is given by pasting $2$--cells.
\item The identity morphism of $(u, f)$ is $(\id_u, \id_f)$.
\end{itemize}
This category has a functor to $\Aff_{/S}$ making it a prestack over $S$.
We call it the mapping prestack with domain $\mcY$ and codomain $\mcZ$.
\end{defn}

\begin{prop}
Mapping prestacks are internal Hom objects in the $2$--category $\PSt_{/S}$
with respect to the product $- \times_S -$.
\end{prop}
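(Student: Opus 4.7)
The plan is to produce, for each prestack $\mcW \in \PSt_{/S}$, an equivalence of categories
\[
\Phi_\mcW : \Hom_{\PSt_{/S}}(\mcW \times_S \mcY, \mcZ) \to \Hom_{\PSt_{/S}}(\mcW, \Map_S(\mcY, \mcZ))
\]
pseudonatural in $\mcW$, which is exactly the defining property of $\Map_S(\mcY, \mcZ)$ as an internal Hom object in the $2$--category $\PSt_{/S}$ with respect to $- \times_S -$.

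First I would define $\Phi_\mcW$ on a $1$--morphism $g : \mcW \times_S \mcY \to \mcZ$. For an object $w \in \mcW$ over $U \in \Aff_{/S}$, the $2$--Yoneda lemma supplies a map $m_w : U \to \mcW$ in $\PSt_{/S}$ with $m_w(\id_U) \cong w$; I set $\Phi_\mcW(g)(w) := (p_\mcW(w), g \circ (m_w \times_S \id_\mcY))$, which is an object of $\Map_S(\mcY, \mcZ)$ over $U$. For a morphism $\phi : w \to w'$ in $\mcW$, the induced $2$--cell $\tilde\phi : m_{w'} \circ p_\mcW(\phi) \To m_w$ from $2$--Yoneda, whiskered against $g$ and $\id_\mcY$, furnishes the required morphism in $\Map_S(\mcY, \mcZ)$. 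For a $2$--morphism $\alpha : g \To g'$ in $\PSt_{/S}$, the horizontal composite $\alpha \star (m_w \times_S \id_\mcY)$ assembles componentwise into a $2$--morphism $\Phi_\mcW(g) \To \Phi_\mcW(g')$. Functoriality, and compatibility with the structure functor to $\Aff_{/S}$, follow from the functoriality of horizontal composition of $2$--cells.

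Next I would construct a candidate quasi-inverse $\Psi_\mcW$. Given $h : \mcW \to \Map_S(\mcY, \mcZ)$, \cref{prop:prest-prod-strict} identifies an object of $\mcW \times_S \mcY$ over $U$ with a pair $(w, y)$ where $w \in \mcW_U$ and $y \in \mcY_U$; writing $h(w) = (p_\mcW(w), f_w)$ with $f_w : U \times_S \mcY \to \mcZ$, I set $\Psi_\mcW(h)(w, y) := f_w(\id_U, y) \in \mcZ_U$. The action on morphisms uses the morphism part of $h$, whose image is a $2$--cell of prestacks that can be evaluated on the ``point'' $(\id_U, y)$, and the action on $2$--morphisms between such $h$'s is defined analogously by componentwise evaluation.

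To verify that $\Phi_\mcW$ and $\Psi_\mcW$ are quasi-inverse I would trace through: for $g : \mcW \times_S \mcY \to \mcZ$ and $(w, y) \in \mcW \times_S \mcY$ over $U$,
\[
\Psi_\mcW \Phi_\mcW(g)(w, y) = g(m_w(\id_U), y) \cong g(w, y),
\]
via the $2$--Yoneda isomorphism $m_w(\id_U) \cong w$, and these isomorphisms assemble, using \cref{prop:prest-prod-strict-nat}, into a natural equivalence $\Psi_\mcW \Phi_\mcW \cong \id$. Conversely, $\Phi_\mcW \Psi_\mcW(h)(w)$ recovers the object $(p_\mcW(w), f_w)$ on the nose after one unwinds the $2$--Yoneda identifications. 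Pseudonaturality in $\mcW$ follows because every step of the construction is given by horizontal composition with maps into $\mcW$, which is strictly functorial in the $\mcW$--variable. The main technical obstacle is $2$--categorical bookkeeping: keeping the $2$--Yoneda isomorphisms $m_w(\id_U) \cong w$ and the pasting coherences consistent across composable morphisms in $\mcW$, and matching pasting of $2$--cells in $\Hom(\mcW \times_S \mcY, \mcZ)$ with the composition law of $\Map_S(\mcY, \mcZ)$. The discreteness of the fibre categories of $S$, together with \cref{prop:prest-prod-strict} and \cref{prop:prest-prod-strict-nat}, strictifies the $\mcW \times_S \mcY$ side and isolates all the genuinely pseudo-structure on the mapping prestack side, keeping the verification manageable.
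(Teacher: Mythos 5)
The paper offers no proof of this statement --- it is dismissed as ``well-known and we omit the proof'' --- so there is no argument of record to compare yours against. What you have written is the standard currying/adjunction argument: an equivalence $\Hom_{\PSt_{/S}}(\mcW \times_S \mcY, \mcZ) \simeq \Hom_{\PSt_{/S}}(\mcW, \Map_S(\mcY, \mcZ))$ pseudonatural in $\mcW$, built by $2$--Yoneda in one direction and evaluation at $(\id_U, y)$ in the other, with \cref{prop:prest-prod-strict} and \cref{prop:prest-prod-strict-nat} used to strictify the product side. This is the right statement of the universal property and the right construction; in particular your whiskering $g \star (\tilde\phi \times e_\mcY)$ does land in the correct Hom-set of $\Map_S(\mcY,\mcZ)$ given the paper's convention that a morphism $(u,f) \to (u',f')$ carries a $2$--cell $f' \circ (g \times \id_\mcY) \To f$ pointing ``backwards.'' The only caveat is that the coherence verifications (independence of the choice of $m_w$ up to canonical isomorphism, compatibility of the pastings with composition in $\mcW$, and pseudonaturality in $\mcW$) are asserted rather than carried out; for a statement the authors themselves treat as folklore this is acceptable, but note that one can arrange $m_w(\id_U) = w$ on the nose by choosing the cleavage, which makes $\Psi_\mcW \Phi_\mcW = \id$ strictly and removes most of the bookkeeping you flag as the main obstacle.
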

\begin{proof}
This is well-known and we omit the proof.
\end{proof}

\begin{cns}[Moduli Prestack of Arrow Bundles]
\label{cns:mod-prest-arr-sec}
For any prestack $\mcX \in \PSt_{/S}$, we consider the following maps:
\begin{itemize}
\item $\ol{\Delta_\mcX} : S \to \Map_S(\mcX, \mcX \times \mcX)$
  corresponding to the diagonal map $\Delta_\mcX : \mcX \to \mcX \times \mcX$
  which sends a map $U \to S$ to the composite
  $U \times \mcX \to[pr_2] \mcX \to[\Delta_\mcX] \mcX \times \mcX$
\item $- \circ p_\mcX : \sM(\mcX)^2 \simeq \Map_S(S, \sM(\mcX)^2)
  \to \Map_S(\mcX, \sM(\mcX)^2)$, which sends a morphism
  $(v, w) : U \to \sM(\mcX)^2$ to the composite
  $U \times \mcX \to[pr_1] U \to[{(v, w)}] \sM(\mcX)^2$
\end{itemize}
We then define the prestack $\sM_1'(\mcX)$ by the following fibre product
of prestacks:
\[\begin{tikzcd}[column sep=huge]
\sM_1'(\mcX)
  \ar[r] \ar[d] \ar[rd, phantom, "\lrcorner" very near start] &
\Map_S(\mcX, \sV(\mcX))
  \ar[d, "\pi_{\sV(\mcX)} \circ -"] \\
\sM(\mcX)^2 \simeq \sM(\mcX)^2 \times S
  \ar[r, "{(- \circ p_{\mcX}) \times \ol{\Delta_\mcX}}" below] &
\Map_S(\mcX, \sM(\mcX)^2) \times \Map_S(\mcX, \mcX^2) \simeq
\Map_S(\mcX, \sB(\mcX)^2)
\end{tikzcd}\]
where the right vertical map is induced by the bundle projection of
$\sV(\mcX)$.
\end{cns}

\begin{prop}\label{prop:M1'-concrete}
For any prestack $\mcX \in \PSt_{/S}$, the underlying category
of the prestack $\sM_1'(\mcX)$ of \cref{cns:mod-prest-arr-sec} admits the
following concrete description:
\begin{itemize}
\item Objects are tuples $(u, s, v, w, \phi)$, where:
  \begin{itemize}
  \item $u : U \to S$ is an object of $\Aff_{/S}$,
  \item $s : U \times \mcX \to \sV(\mcX)$ is a $1$--morphism in $\PSt_{/S}$,
  \item $v, w : U \to \sM(\mcX)$ are $1$--morphisms in $\PSt_{/S}$, and
  \item $\phi : p \circ s \To (v \times \id_\mcX, w \times \id_\mcX) :
    U \times \mcX \to \sB(\mcX)^2$ is a $2$--morphism in $\PSt_{/S}$
  \end{itemize}
\item Morphisms $(u, s, v, w, \phi) \to (u', s', v', w', \phi')$ are tuples
  $(f, a, b, c)$ where:
  \begin{itemize}
  \item $f : U = \dom(u) \to U' = \dom(u')$ is a morphism in $\Aff_{/S}$,
  \item $a : s' \circ (f \times \id_\mcX) \To s : U \times \mcX \to \sV(\mcX)$
    is a $2$--morphism in $\PSt_{/S}$, and
  \item $b : v' \circ f \To v, c : w' \circ f \To w : U \to \sM(\mcX)$
    are $2$--morphisms in $\PSt_{/S}$,
  \end{itemize}
  such that the following diagram of natural transformations commutes:
  \[\begin{tikzcd}[column sep=huge]
  \pi_{\sV(\mcX)} \circ s' \circ (f \times \id_\mcX)
    \ar[r, "\phi' \star (f \times \id_\mcX)"]
    \ar[dd, "\pi_{\sV(\mcX)} \star a" left] &
  (v' \times \id_\mcX, w' \times \id_\mcX) \circ (f \times \id_\mcX)
    \ar[d, equal] \\ &
  ((v' \circ f) \times \id_\mcX, (w' \circ f) \times \id_\mcX)
    \ar[d, "{(b \times e_\mcX, c \times e_\mcX)}"] \\
  \pi_{\sV(\mcX)} \circ s \ar[r, "\phi" below] &
  (v \times \id_\mcX, w \times \mcX)
  \end{tikzcd}\]
  where $e_\mcX$ is the identity $2$--morphism of $\id_\mcX$.
\end{itemize}
\end{prop}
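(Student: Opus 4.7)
The plan is to unwrap the standard tuple description of a $2$--fibre product of prestacks (\cite[\href{https://stacks.math.columbia.edu/tag/0040}{Lemma 0040}]{stacks-project}) applied to the defining square of $\sM_1'(\mcX)$ in \cref{cns:mod-prest-arr-sec}, and then to check that the resulting data matches the tuples claimed in the statement once one spells out the definitions of morphisms in $\Map_S(\mcX, -)$.

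For objects, I would note that an object of the $2$--fibre product over $u : U \to S$ consists of an object in each of the two source prestacks over $u$ together with an isomorphism between their images in the base. An object of $\sM(\mcX)^2$ over $u$ is a pair of $1$--morphisms $v, w : U \to \sM(\mcX)$, and an object of $\Map_S(\mcX, \sV(\mcX))$ over $u$ is a $1$--morphism $s : U \times_S \mcX \to \sV(\mcX)$. Under the identification $\sB(\mcX)^2 \simeq \sM(\mcX)^2 \times \mcX^2$, unwinding shows that the bottom map of the square sends $(v, w)$ to $(v \times \id_\mcX, w \times \id_\mcX)$ while the right map sends $s$ to $\pi_{\sV(\mcX)} \circ s$. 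Because the fibres of the prestack $\Map_S(\mcX, \sB(\mcX)^2)$ are groupoids, an isomorphism in the relevant fibre is just an invertible $2$--morphism in $\PSt_{/S}$, which is exactly the datum $\phi$ of the statement.

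For morphisms, a morphism in the $2$--fibre product over $f : U \to U'$ is a compatible pair of morphisms in the two source prestacks. Spelling out morphisms in $\sM(\mcX) = \Map_S(\mcX, \sM)$ yields the $2$--morphisms $b : v' \circ f \To v$ and $c : w' \circ f \To w$, while spelling out morphisms in $\Map_S(\mcX, \sV(\mcX))$ yields $a : s' \circ (f \times \id_\mcX) \To s$. Their images in $\Map_S(\mcX, \sB(\mcX)^2)$ are the $2$--morphisms $(b \times e_\mcX, c \times e_\mcX)$ and $\pi_{\sV(\mcX)} \star a$ respectively, and the abstract $2$--fibre product compatibility, expanded via the pasting law that governs composition in mapping prestacks, becomes precisely the commutative square displayed in the statement. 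Composition and identities are inherited componentwise and match the pasting-based operations, so the bijections on objects and morphisms assemble into a strict isomorphism of categories.

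The main obstacle I expect is careful bookkeeping of the direction of each $2$--morphism. By the convention adopted in the paper, a morphism in $\Map_S(\mcX, -)$ is encoded by a $2$--morphism running from the pulled-back target to the source, so $b, c, a$ all point in this same direction; one must check that when composed via pasting, the abstract fibre product compatibility reassembles into exactly the displayed square rather than its mirror. Once this direction bookkeeping is pinned down, the remaining verifications are immediate.
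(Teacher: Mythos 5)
Your proposal is correct and follows essentially the same route as the paper: unwrap the $2$--fibre product via the tuple description of \cite[\href{https://stacks.math.columbia.edu/tag/0040}{Lemma 0040}]{stacks-project}, spell out objects and morphisms of $\Map_S(\mcX, \sV(\mcX))$ and of $\sM(\mcX)^2 \simeq \Map_S(S, \sM(\mcX)^2)$, compute the two legs into $\Map_S(\mcX, \sB(\mcX)^2)$, and identify the compatibility isomorphism and constraint with $\phi$ and the displayed square. Your attention to the direction convention for $2$--morphisms in mapping prestacks matches the bookkeeping the paper carries out implicitly.
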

\begin{proof}
We will unwrap the limit defining $\sM_1'(\mcX)$.
An object of $\Map_S(\mcX, \sV(\mcX))$ consists of a tuple
$(u, s)$, where $u : U \to S \in \Aff_{/S}$ and
$s : U \times \mcX \to \sV(\mcX)$. A  morphism $(u, s) \to (u', s')$
is a tuple $(f, a)$, where $f : U = \dom(u) \to U' = \dom(u') \in \Aff_{/S}$
and $a : s' \circ (f \times \id_\mcX) \To s$ is a $2$--morphism in $\PSt_{/S}$.
The projection to $\PSt_{/S}$ is given by
$(u, s) \mapsto u, (f, a) \mapsto f$.
The morphism of prestacks
\[
\pi_{\sV(\mcX)} \circ - :
    \Map_S(\mcX, \sV(\mcX)) \to \Map_S(\mcX, \sB(\mcX)^2)
\]
is given by
\[
(u, s) \mapsto (u, \pi_{\sV(\mcX)} \circ s),
(f, a) \mapsto (f \circ u, \pi_{\sV(\mcX)} \star a)
    = (u', \pi_{\sV(\mcX)} \star a)
\]

An object of
$\sM(\mcX)^2 \simeq \Map_S(S, \sM(\mcX)^2) \simeq \Map_S(S, \sM(\mcX))^2$
is a tuple $(u, v, w)$, where $u : U \to S \in \Aff_{/S}$ and
$(v, w) : U \times S \simeq U \to \sM(\mcX)^2$ is a $1$--morphism
in $\PSt_{/S}$ --- note that every morphism $U \to \sM(\mcX)^2$ is of this
form by \cref{prop:prest-prod-strict}. Similarly, a morphism
$(u, v, w) \to (u', v', w')$ is a tuple $(f, b, c)$ where
$f : U = \dom(U) \to U' = \dom(u')$ is a morphism in $\Aff_{/S}$,
$b : v' \circ f \To v, c : w' \circ f \To w : U \to \sM(\mcX)$
are $2$--morphisms in $\PSt_{/S}$. The structure map to $\Aff_{/S}$
is given by:
\[
(u, v, w) \mapsto u, (f, b, c) \mapsto f
\]
Now, consider the map
$- \circ p_\mcX : \Map_S(S, \sM(\mcX)) \to \Map_S(\mcX, \sM(\mcX))$ given
by precomposition with the structure map $p_\mcX : \mcX \to S$. We observe that
on an object $(u, v) \in \Map_S(S, \sM(\mcX))$, the map has value:
\[
(u, U \times \mcX
      \to[\id_U \times p_\mcX] U \times S
      \to[\simeq] U
      \to[v] \sM(\mcX))
\]
Then, $\id_U \times p_\mcX$ composed with the equivalence $U \times S \simeq S$
is just the projection $U \times \mcX \to U$. On a morphism
$(f, b) : (u, v) \to (u', v')$ in $\Map_S(S, \sM(\mcX))$, the map
$- \circ p_\mcX$ has value:
\[
(f : U \to U',
b \times e_\mcX
  : (v' \circ f) \times \id_\mcX \To v \times \id_\mcX
  : U \times \mcX \to \sB(\mcX))
\]
Then, using \cref{prop:prest-prod-strict}, the map
$- \circ p_\mcX : \Map_S(S, \sM(\mcX)^2) \to \Map_S(\mcX, \sM(\mcX))$
is given by:
\[\begin{array}{ccc}
(u, v, w) & \mapsto &
  (u, U \times \mcX \to[pr_1] U \to[v] \sB(\mcX),
      U \times \mcX \to[pr_2] U \to[w] \sB(\mcX)) \\
(u, b, c) & \mapsto & (u, b \times e_\mcX, c \times e_\mcX)
\end{array}\]

These descriptions along with the concrete description of fibre products
of prestacks given in
\cite[\href{https://stacks.math.columbia.edu/tag/0040}{Lemma 0040}]
{stacks-project} prove the result.
\end{proof}

\begin{rmk}\label{rmk:mod-prest-arr-sec}
An object of the underlying category of $\sM_1'(\mcX)$
consists of maps
$U \to S, s : U \times_S \mcX \to \sV(\mcX),
r = (v, w) : U \to \sM(\mcX)^2$, such that following diagrams $2$--commute:
\[\begin{tikzcd}
U \times_S \mcX \ar[r, "s"] \ar[d, "pr_1" left] & \sV(\mcX) \ar[d] \\
U \ar[r, "r" below] & \sM(\mcX) \times_S \sM(\mcX)
\end{tikzcd}\hspace{3.5em}
\begin{tikzcd}
U \times_S \mcX \ar[r, "s"] \ar[d, "pr_2" left] & \sV(\mcX) \ar[d] \\
\mcX \ar[r, "\Delta_\mcX" below] & \mcX \times_S \mcX
\end{tikzcd}\]
The $2$--commutativity above means that the map
$U \times_S \mcX \to[s] \sV(\mcX) \to[\pi_{\sV(\mcX)}] \sB(\mcX)^2$
is of the form $r \times \Delta_X = (v, w) \times \Delta_X$
which is the same as $(v \times_S \id_\mcX, w \times_S \id_\mcX)$ by the identification
$\Map_S(\mcX, \sB(X)^2) \simeq
\Map_S(\mcX, \sM(\mcX)^2) \times \Map_S(\mcX, \mcX^2)$.
That is, the object is a $2$--categorical diagram in $\PSt_{/S}$ of the form:
\[\begin{tikzcd}[row sep={huge}, column sep=huge]
& \sV(\mcX) \ar[d, "\pi_{\sV(\mcX)}"] \\
U \times \mcX
  \ar[r, bend left, ""{below, name=U},
    "\pi_{\sV(\mcX)} \circ s" above]
  \ar[r, bend right, ""{above, name=D},
    "{(v \times \id_\mcX, w \times \id_\mcX)}" below]
  \ar[ru, bend left, "s" above left]
  \ar[from=U, to=D, Rightarrow, "\phi"] &
\sB(X)^2
\end{tikzcd}\]
Roughly speaking, this data corresponds to a global section
of $(v, w)^\lhd[\mcE(\mcX), \mcE(\mcX)]_{\square}$, where
$pr_i : \sB(\mcX)^2 \to \sB(\mcX)$ are the projections onto the two factors
of $\sB(\mcX)$ and not $\sM(\mcX)$ or $\mcX$ separately.
Now, $(v, w)$ factors through
$\sM^n(\mcX) \times_S \mcX \times_S \sM^m(\mcX) \times_S \mcX$ for some $n, m$
so that $(v, w)^\lhd[\mcE(\mcX), \mcE(\mcX)]_{\square}$
is isomorphic to the following by \cref{prop:pullback-Hom}:
\begin{align*}
 & (v, w)^\lhd[pr_1^*\mcE(\mcX), pr_2^*\mcE(\mcX)] \\
=& [(v, w)^\lhd pr_1^*\mcE(\mcX), (v, w)^\lhd pr_2^*\mcE(\mcX)] \\
=& [v^< \mcE(\mcX), w^< \mcE(\mcX)] \\
=& [E, F]
\end{align*}
for two finite locally free sheaves
$E := v^<\mcE(\mcX), F := w^<\mcE(\mcX)$ on $U \times \mcX$, corresponding to
the maps $v, w$ respectively.
Thus, an object of $\sM_1'(\mcX)$ consists of
two finite locally free sheaves $E, F$ on $U \times \mcX$ and a global
section of $[E, F]$ which is a morphism of $\mcO_{U \times \mcX}$--modules
$E \to F$, much like $\sM_1(\mcX)$.
\end{rmk}

We will now see that this remark can be made precise into an
equivalence of prestacks $\sM_1'(\mcX) \simeq \sM_1(\mcX)$.

\begin{thm}\label{thm:arr-bun-arr-sec-equiv}
For any prestack $\mcX \in \PSt_{/S}$, we have an equivalence of prestacks
$\sM_1'(\mcX) \simeq \sM_1(\mcX)$.
\end{thm}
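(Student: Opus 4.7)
The plan is to construct a functor $\Phi : \sM_1'(\mcX) \to \sM_1(\mcX)$ over $\Aff_{/S}$ and verify that it is essentially surjective and fully faithful on each fibre over $U \in \Aff_{/S}$. The functor will use the universal property of $\sM(\mcX) = \coprod_n \Map_S(\mcX, B\Gl_{n, S})$ to translate classifying maps $v : U \to \sM(\mcX)$ into finite locally free sheaves $v^<\mcE(\mcX)$ on $U \times_S \mcX$, and it will use \cref{cns:sec-to-morphism} together with \cref{prop:pullback-Hom} to translate sections of $\sV(\mcX)$ into morphisms of $\mcO$--modules.

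On objects, given $(u, s, v, w, \phi)$ as described in \cref{prop:M1'-concrete}, I set $E := v^<\mcE(\mcX)$ and $F := w^<\mcE(\mcX)$. The pair $(s, \phi)$ specifies a factorization of $(v \times \id_\mcX, w \times \id_\mcX) : U \times \mcX \to \sB(\mcX)^2$ through $\sV(\mcX)$ up to the $2$--isomorphism $\phi$; after strictifying via $\phi^{-1}$, \cref{cns:sec-to-morphism} yields a morphism of $\mcO_{U \times \mcX}$--modules whose domain and codomain are identified, by \cref{prop:pullback-Hom} applied to the finite locally free sheaf $\mcE(\mcX)$, with $E$ and $F$ respectively, producing $\tilde s : E \to F$. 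I put $\Phi(u, s, v, w, \phi) := (u, E, F, \tilde s)$. On a morphism $(f, a, b, c)$, horizontally composing the $2$--isomorphisms $b$ and $c$ with $\mcE(\mcX)$ as in \cref{prop:pullback-nat-OX-alg} produces isomorphisms $\alpha : f^<E' \to E$ and $\beta : f^<F' \to F$; the commuting square demanded by \cref{cns:mod-prest-arr-bun} then follows from the coherence square in the definition of morphisms of $\sM_1'(\mcX)$ together with \cref{prop:section-nat} applied to $a$. I set $\Phi(f, a, b, c) := (f, \alpha, \beta)$, with functoriality following from compatibility of pullbacks with pasting of $2$--cells.

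Essential surjectivity on the fibre over $U$ follows from the universal property of $\sM(\mcX)$: every finite locally free sheaf $E$ on $U \times \mcX$ arises as $v^<\mcE(\mcX)$ for some classifying morphism $v : U \to \sM(\mcX)$, unique up to $2$--isomorphism, and any morphism $s : E \to F$ of $\mcO$--modules lifts, via \cref{prop:geom-vec-bun-Gro-cons} and \cref{prop:pullback-Hom}, to a section $U \times \mcX \to \sV(\mcX)$ together with the required $2$--isomorphism, exhibiting an object of $\sM_1'(\mcX)$ mapping isomorphically to $(u, E, F, s)$. For fully faithfulness, \cref{prop:prest-prod-strict-nat} reduces equality of $2$--morphisms landing in $\sM(\mcX)^2$ to equality of their two components, and the universal properties of $B\Gl_{n, S}$ and of $\sV(\mcX)$ supply the necessary bijections between morphisms of vector bundles on $U \times \mcX$ and equivalence classes of $2$--isomorphisms of classifying data.

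The main obstacle will be the careful bookkeeping of $2$--cells: the object data carries a $2$--isomorphism $\phi$ rather than strict equality, so each invocation of \cref{cns:sec-to-morphism} must be paired with a strictification involving $\phi^{\pm 1}$, and functoriality of $\Phi$ will require systematic use of \cref{prop:section-nat} to transport the resulting morphisms of $\mcO$--modules across different choices of factorization. Matching the coherence square prescribed for morphisms of $\sM_1'(\mcX)$ exactly with the commuting square of \cref{cns:mod-prest-arr-bun} after transporting all data across \cref{prop:pullback-Hom} is the most delicate verification; once this translation is pinned down, the argument reduces to repeated application of universal properties already established in \cref{sec:Prelim,sec:RelSpec}.
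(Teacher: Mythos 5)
Your proposal is correct and follows essentially the same route as the paper's proof: both construct the comparison functor $\sM_1'(\mcX) \to \sM_1(\mcX)$ by sending $(u,s,v,w,\phi)$ to $(u, v^<\mcE(\mcX), w^<\mcE(\mcX), \tilde s)$ with $\tilde s$ obtained from \cref{cns:sec-to-morphism} and transported across $\phi$, and both check the equivalence fibrewise using \cref{prop:section-nat}, \cref{prop:prest-prod-strict-nat}, \cref{prop:pullback-Hom} and the universal property of $\sM(\mcX)$. The only cosmetic difference is that the paper organizes the morphism-level verification as a commuting cube rather than your square-by-square bookkeeping.
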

\begin{proof}
We will define a morphism of prestacks
$\Psi : \sM_1'(\mcX) \to \sM_1(\mcX)$ and show that it is fully faithful
and essentially surjective.
For an object $(u, s, v, w, \phi) \in \Ob{\sM(\mcX)}$, we obtain finite locally
free sheaves $E_0 = (pr_1 \circ \pi_{\sV(\mcX)} \circ s)^*\mcE(\mcX),
F_0 = (pr_2 \circ \pi_{\sV(\mcX)} \circ s)^*\mcE(\mcX),
E_1 := v^<\mcE(\mcX),
F_1 := w^<\mcE(\mcX)$ on
$U \times \mcX$, where $U = \dom(u)$. Then, \cref{prop:sec-to-morphism} gives us
a morphism of $\mcO_{U \times \mcX}$--modules $s_0 : E_0 \to F_0$. Next, $\phi$
gives us isomorphisms of $\mcO_{U \times \mcX}$--moduels
$\phi_1 := (pr_1 \circ \pi_{\sV(\mcX)}) \star \phi : E_0 \to E_1,
\phi_2 := (pr_2 \circ \pi_{\sV(\mcX)}) \star \phi : F_0 \to F_1$, hence a unique
morphism of $\mcO_{U \times \mcX}$--moduels $s_1 : E \to F$ making the following
diagram commute:
\[\begin{tikzcd}
E_0 \ar[r, "\phi_1"] \ar[d, "s_0" left] & E_1 \ar[d, "s_1"] \\
F_0 \ar[r, "\phi_2" below] & F_1
\end{tikzcd}\]
Thus, we have a triple $(E_1, F_1, s_1)$ constituting an object --- call it
$\Psi(u, s, v, w, \phi)$ --- of $\sM_1(\mcX)$.

Let $(f, a, b, c) : (u, s, v, w, \phi) \to (u', s', v', w', \phi')$
be a morphism of $\sM_1'(\mcX)$. We denote the sheaves and morphisms associate
to the codomain object as in the previous paragaph by $E_0', E_1', F_0', F_1',
s_0', s_1', \phi_0', \phi_1'$. We obtain, by pasting arrows, a cube as follows:
\begin{equation}\label{eqn:arr-bun-arr-sec-equiv-cube}
\begin{tikzcd}[row sep=huge, column sep=huge]
f^<E_0'
  \ar[rr, "f^<\phi_0'"]
  \ar[dd, "f^<s_0'" left]
  \ar[rd, "(pr_1 \circ \pi_{\sV(\mcX)}) \star a" description] & &
f^<E_1'
  \ar[rd, "b \times e_\mcX" description]
  \ar[dd, "f^<s_1'" near end] & \\ &
E_0
  \ar[dd, "s_0" left, near start]
  \ar[rr, crossing over, "\phi_0" near start] & &
E_1
  \ar[dd, "s_1"] \\
f^<F_0'
  \ar[rd, "(pr_2 \circ \pi_{\sV(\mcX)}) \star a" description]
  \ar[rr, "f^<\phi_1'" below, near end] & &
f^<F_1'
  \ar[rd, "c \times e_\mcX" description] & \\ &
F_0
  \ar[rr, "\phi_1" below] & &
F_1
\end{tikzcd}
\end{equation}
where all arrows, except possibly the vertical ones, are isomorphisms.
The front and back faces commute by the previous paragraph. The left face commutes
by \cref{prop:section-nat}. Noticing that
$(pr_i \circ \pi_{\sV(\mcX)}) \star (b \times e_\mcX, c \times \mcX) = b$
when $i = 1$ and $c$ when $i = 2$, the top and bottom faces commute by the
commutativity constraint in the description of morphisms of $\sM_1'(\mcX)$ given in
\cref{prop:M1'-concrete}. These imply that the right face commutes.
Hence, the map $f : U \to U'$ along with the right face of the above
cube provides a morphism
$(b, c) : (E_1, F_1, s_1) \to (E_1', F_1', s_1')$ in $\sM_1(\mcX)$
--- call it $\Psi(f, a, b, c)$.
It is straightforward to show that the assignment $\Psi$ constitutes a functor
$\sM_1'(\mcX) \to \sM_1(\mcX)$, which commutes with the projections to
$\Aff_{/S}$ by construction, and is, hence a morphism of prestacks.

The Gorthendieck construction gives an equivlance between categories fibred in
groupoids over $\Aff_{/S}$ and pseudofunctors $\Aff_{/S} \to \Grpd$. Therefore,
it suffices to check that the morphism
of prestacks is an equivalence when restricted to the fibre categories --- this
is the analogue, in the context of prestacks, of the pointwise criterion for
equivalence of pseudofunctors.
For any $U \to S$ in $\Aff_{/S}$, consider two morphisms
$(\id_U, a, b, c), (\id_U, \alpha, \beta, \gamma) : (u, s, v, w, \phi) \to
(u', s', v', w', \phi')$ that map to the same morphism under $\Psi$.
Then, we have an equality as follows, by hypothesis:
\begin{align*}
b \times e_\mcX &= \beta \times e_\mcX \iff b = \beta \\
c \times e_\mcX &= \gamma \times e_\mcX \iff c = \gamma
\end{align*}
By the observation that all morphisms in \cref{eqn:arr-bun-arr-sec-equiv-cube}
except the vertical ones are isomorphisms, we then have equalities:
\[\begin{array}{ccc}
(pr_1 \circ \pi_{\sV(\mcX)}) \star a &= b \times e_\mcX =& (pr_1 \circ \pi_{\sV(\mcX)}) \star \alpha \\
(pr_2 \circ \pi_{\sV(\mcX)}) \star a &= c \times e_\mcX =& (pr_2 \circ \pi_{\sV(\mcX)}) \star \alpha
\end{array}\]
Then, by \cref{prop:prest-prod-strict-nat}, we have $a = \alpha$. This shows
that $\Psi$ is fully faithful on fibre categories.

Consider an object $(E, F, f)$ of $\sM_1(\mcX)$ over $u : U \to S$.
Then, by the universal property
of the prestack $\sM(\mcX)$, we obtain two morphisms of prestacks
$v, w : U \to \sM(\mcX)$ and two isomorphisms of $\mcO_{U \times \mcX}$--modules
$\phi_E : v^<\mcE(\mcX) \to E, \phi_F : w^<\mcE(\mcX) \to F$. Thus, we have a
unique morphism $f' : v^<\mcE(\mcX) \to w^<\mcE(\mcX)$ of
$\mcO_{U \times \mcX}$--modules making the following diagram commute:
\[\begin{tikzcd}
v^<\mcE(\mcX) \ar[r, "\phi_E"] \ar[d, "f'" left] & E \ar[d, "f"] \\
w^<\mcE(\mcX) \ar[r, "\phi_F" below] & F
\end{tikzcd}\]
By \cref{prop:sec-to-morphism}, $f'$ gives a section of
$s'' : U \times \mcX \to \bV[v^<\mcE(\mcX), w^<\mcE(\mcX)]$.
However, by \cref{cns:pullback-Hom}, we have an isomorphism
\[
\xi_{(v \times \id_\mcX, w \times \id_\mcX)} :
(v, w)^\lhd[\mcE(\mcX), \mcE(\mcX)] \to[] [v^<\mcE(\mcX), w^<\mcE(\mcX)]
\]
This gives an equivalence of categories over $\mcX$:
\[
\int (\xi_{v \times \id_\mcX, w \times \id_\mcX)}):
\bV(v, w)^\lhd[\mcE(\mcX), \mcE(\mcX)] \to[] \bV[v^<\mcE(\mcX), w^<\mcE(\mcX)]
\]
which composed with $s''$ gives a section of
$s' : U \times \mcX \to \bV(v, w)^\lhd[\mcE(\mcX), \mcE(\mcX)]$.
In turn, since $\int \bV(v, w)^\lhd[\mcE(\mcX), \mcE(\mcX)]$
is the fibre product $(U \times \mcX) \times_{\sB(\mcX)^2} \sV(\mcX)$,
we have a canonical map
$\int \bV(v, w)^\lhd[\mcE(\mcX), \mcE(\mcX)] \to \sV(\mcX)$ which which by
composing with $s'$ gives a map $s : U \times \mcX \to \sV(\mcX)$. By
the universal property of $2$--fibre products, we get a $2$--morphism
\[
\phi : \pi_{\sV(\mcX)} \circ s \To (v \times \id_\mcX, w \times \id_\mcX)
: U \times \mcX \to \sB(\mcX)^2
\]
That is, $(u, s, v, w, \phi)$ is an object of $\sM_1'(\mcX)$. By construction,
under $\Psi$, this object maps to
$(v^<\mcE(\mcX), w^<\mcE(\mcX), f')$, which is isomorphic
to $(E, F, f)$ in $\sM_1(\mcX)$. Thus, $\Psi$ is essentially surjective on
fibre categories, as required.
\end{proof}

\begin{notn}
In light of the previous theorem, from this point onwards,
we will write $\sM_1(\mcX)$ for $\sM_1'(\mcX)$ as well, unless there is a need
to distinguish them.
\end{notn}

We now address the descent and algebaicity properties of the prestacks
constructed so far.

\begin{thm}
When $\mcX \in \PSt_{/S}$ is a stack, then so is $\sM_1(\mcX)$.
\end{thm}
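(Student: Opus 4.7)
The plan is to invoke the equivalence $\sM_1(\mcX) \simeq \sM_1'(\mcX)$ from \cref{thm:arr-bun-arr-sec-equiv} and verify descent on the fibre-product description $\sM_1'(\mcX)$ of \cref{cns:mod-prest-arr-sec}, since the stack property transfers across equivalences of prestacks. Concretely, I would reduce the claim to two well-known facts plus the stackiness of $\sV(\mcX)$, and then assemble via fibre products.

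First, I would record the building blocks. The prestack $\sM = \coprod_n B\Gl_{n, S}$ is a stack by \'etale descent for vector bundles, hence $\sM(\mcX)$, $\sM(\mcX)^2$, $\mcX^2$, and $\sB(\mcX)^2 = \sM(\mcX)^2 \times \mcX^2$ are all stacks whenever $\mcX$ is. Next, for any stack $\mcZ$ over $S$, the mapping prestack $\Map_S(\mcY, \mcZ)$ is a stack: given an \'etale cover $\set{U_i \to U}$ and compatible $1$--morphisms $U_i \times_S \mcY \to \mcZ$ together with gluing $2$--morphisms on the overlaps, descent in $\mcZ$ produces a unique glued map $U \times_S \mcY \to \mcZ$, and similarly on $2$--morphisms. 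Both of these are standard; I would cite the relevant results rather than redo them.

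The substantive step is that $\sV(\mcX) = \bV\mcV(\mcX) = \ul\Spec_{\sB(\mcX)^2}(\Sym(\mcV(\mcX)^\vee))$ is a stack. By \cref{prop:vec-bun-proj-affine}, the projection $\pi_{\sV(\mcX)} : \sV(\mcX) \to \sB(\mcX)^2$ is affine. Then, since affine schemes satisfy fpqc, and hence \'etale, descent, and $\sB(\mcX)^2$ is already a stack, affine morphisms pull back gluing data on the base to gluing data of $\Sym(\mcV(\mcX)^\vee)$--algebra maps to the structure sheaf, which glue uniquely. This shows that $\sV(\mcX)$ satisfies \'etale descent; combined with the fact that its underlying functor on $\Aff_{/S}$ is given by a Grothendieck construction (\cref{prop:rel-spec-Gr-cons}), one sees directly that descent data assemble via descent of algebra morphisms on the base cover.

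Finally, I would conclude using the $2$--pullback square defining $\sM_1'(\mcX)$,
\[
\sM_1'(\mcX) \;\simeq\; \sM(\mcX)^2 \times_{\Map_S(\mcX, \sB(\mcX)^2)} \Map_S(\mcX, \sV(\mcX)),
\]
where every corner is a stack by the previous paragraphs (using that $\sB(\mcX)^2$ and $\sV(\mcX)$ are stacks to apply the mapping-stack fact). Since $2$--fibre products of stacks in $\PSt_{/S}$ are stacks --- descent data assemble componentwise and the coherence $2$--isomorphism descends by \cref{prop:prest-prod-strict-nat} applied to each overlap --- $\sM_1'(\mcX)$ is a stack, and therefore so is $\sM_1(\mcX)$. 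The only mildly delicate point to be careful about is verifying the stack property for $\sV(\mcX)$ in the large-\'etale-site formulation used throughout the paper, but this is handled cleanly by the affineness of $\pi_{\sV(\mcX)}$ and fpqc descent for affine morphisms.
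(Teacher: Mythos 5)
Your proposal is correct and follows essentially the same route as the paper: reduce to $\sM_1'(\mcX)$ via \cref{thm:arr-bun-arr-sec-equiv}, show that $\sM(\mcX)^2$, $\Map_S(\mcX,\sV(\mcX))$ and $\Map_S(\mcX,\sB(\mcX)^2)$ are stacks, and conclude since fibre products of stacks are stacks. The only cosmetic difference is in justifying that $\sV(\mcX)$ is a stack, where you lean on affineness of $\pi_{\sV(\mcX)}$ and fpqc descent while the paper argues from its description as the Grothendieck construction of a sheaf on $\sB(\mcX)^2$ --- both are adequate.
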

\begin{proof}
If $\mcX$ is a stack, then so is $\sM(\mcX)$. Hence,
$\sB(\mcX)^2 = (\sM(\mcX) \times \mcX)^2$ is a stack.
Since $\sV(\mcX) \to \sB(X)^2$ is the Grothendieck construction
of a sheaf on $\sB(\mcX)^2$, it is a stack on the underlying category of
$\sB(\mcX)^2$, and from this, it can be shown that it satisfies the descent
property over $\Aff_{/S}$.
Thus, $\sM(\mcX)^2, \Map_S(\mcX, \sV(\mcX)), \Map_S(\mcX, \sB(\mcX)^2)$
are all stacks. Since the fibre product of a span of stacks is a stack,
$\sM_1(\mcX)$ is a stack by \cref{thm:arr-bun-arr-sec-equiv}.
\end{proof}

\begin{thm}\label{thm:mod-st-arr-bun-alg}
If $X$ is a scheme that is projective, flat and of finite presentation over $k$,
then $\sM_1(X)$:
\begin{enumerate}
\item is Artin,
\item is locally of finite presentation, and
\item has affine diagonal (in particular, is quasi-separated).
\end{enumerate}
\end{thm}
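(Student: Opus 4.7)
The plan is to exploit the equivalence $\sM_1(X) \simeq \sM_1'(X)$ proved in \cref{thm:arr-bun-arr-sec-equiv} and write the right-hand side as a 2-fibre product of stacks for which the three properties are either known or readily checked. Unwinding \cref{cns:mod-prest-arr-sec}, this fibre product is
\[
\sM_1'(X) \;\simeq\; \sM(X)^2 \times_{\Map_S(X,\,\sB(X)^2)} \Map_S(X,\,\sV(X)),
\]
so it suffices to establish the three properties for each of the four corners and then push them through the 2-fibre product.

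First I would handle the corners. By \cite{Wang-BunG} the stack $\sM(X) = \coprod_{n\ge 0} \Map_S(X,B\Gl_{n,S})$ is algebraic, locally of finite presentation, and has affine diagonal; hence the same holds for $\sM(X)^2$ and for $\sB(X)^2 = (\sM(X) \times_S X)^2$, using that $X$ is projective over $k$. By \cref{prop:vec-bun-proj-affine}, the structure morphism $\sV(X) = \bV[\mcE(X),\mcE(X)]_\square \to \sB(X)^2$ is affine, and affine morphisms preserve algebraicity, local finite presentation, and affine diagonal, so $\sV(X)$ inherits all three properties from $\sB(X)^2$.

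Next I would promote these properties to the mapping stacks by appealing to the standard representability theorems (Aoki, Hall--Rydh, Halpern-Leistner--Preygel): if $X$ is proper, flat and of finite presentation over $S$, and $\mcY \to S$ is algebraic, locally of finite presentation and has affine diagonal, then $\Map_S(X,\mcY)$ enjoys the same three properties. Applying this to $\mcY = \sV(X)$ and to $\mcY = \sB(X)^2$ gives algebraicity, local finite presentation and affine diagonal for the remaining two corners. Finally, a 2-fibre product of algebraic stacks is algebraic, is locally of finite presentation when both legs are, and has affine diagonal when each factor does, since the diagonal of a fibre product sits as an affine monomorphism inside the fibre product of the diagonals. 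Combined with \cref{thm:arr-bun-arr-sec-equiv} this would yield all three statements, with quasi-separation following from the diagonal being affine hence quasi-compact and separated.

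The main obstacle is the mapping-stack step. It requires a careful check that the hypotheses of the chosen algebraicity theorem are met — notably the affine-diagonal hypothesis on $\sV(X)$, which I would reduce via the affine projection $\sV(X) \to \sB(X)^2$ to the same property for $\sM(X)$ — and a verification that the affine diagonal of $\Map_S(X,\mcY)$ can be identified, by the usual adjunction argument, with a mapping stack constructed from the diagonal morphism $\mcY \to \mcY \times_S \mcY$; since that diagonal is affine and pushforward along the proper flat morphism $X \to S$ preserves affineness, one obtains the affine diagonal of $\Map_S(X,\mcY)$. Once this mapping-stack input is in hand, the descent of the three properties through the 2-fibre product defining $\sM_1'(X)$ is formal.
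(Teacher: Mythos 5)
Your proposal is correct and follows essentially the same route as the paper's own proof sketch: transfer via the equivalence $\sM_1(X)\simeq\sM_1'(X)$, establish the properties for $\sM(X)$ by \cite[Theorem 1.0.1]{Wang-BunG} and for $\sV(X)$ via the affine projection to $\sB(X)^2$, invoke the Hall--Rydh mapping-stack theorem, and descend through the defining $2$--fibre product. The extra detail you supply on identifying the diagonal of the mapping stack is a welcome elaboration of a step the paper leaves implicit, but it is not a different argument.
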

\begin{proof}[Proof sketch]
We observe that $\sM(X) = \coprod_{n = 0}^\infty \Map(X, B\Gl_n)$
is quasi-separated, locally of finite presentation, and has an affine diagonal by
\cite[Theorem 1.0.1.]{Wang-BunG}. The projection
$\sV(\mcX) \to \sB(\mcX)^2$ being affine allows us to conclude
that $\sV(\mcX)$ is a quasi-separated algebraic stack with affine stabilizers and
locally of finite presentation.
As a result, \cite[Theorem 1.2]{HR19} applies, and we can conclude that the
mapping stacks involved in \cref{cns:mod-prest-arr-sec} are algebraic, locally
of finite presentation and has affine diagonal. This shows that
$\sM_1'(X)$ is algebraic and hence \cref{thm:arr-bun-arr-sec-equiv} yields
the result. For a more elaborate argument, see \cite[\S 4]{ModQuivBun}.
\end{proof}

\begin{rmk}
We note that this algebraicity result holds much more generally. For this, we
refer to our result \cite[Theorem 4.10]{ModQuivBun}.
\end{rmk}

\subsection{The Boundary Map}

An important component of the proof of algebraicity of the moduli stack
of connection triples will will be the map $\sM'_1(\mcX) \to \sM(\mcX)^2$
that sends a morphism of vector bundles over $\mcX$ to the pair consisting
of the source and target of the morphism. We will record some properties
of this map for use in \cref{sec:ArrowBunFormalGrpd}. This will be called the
``boundary map'' as it sends a $1$--simplex in the category vector bundles
to the pair of vertices forming its boundary.

\begin{cns}\label{defn:bndry-map}
We define a morphism of prestacks $\partial_\mcX : \sM_1(\mcX) \to \sM(\mcX)^2$
as follows.
For an object $(u, E, F, s)$ of $\sM_1(\mcX)$ over $u : U \to S$, we define
\[
\partial_\mcX(u, E, F, s) := ((u, E), (u, F))
\]
noting that $E, F$ are finite locally free sheaves on $U \times \mcX$.
For a morphism $(f, a, b) : (u, E, F, s) \to (u', E', F', s')$, we define:
\[
\partial_\mcX(f, a, b) := ((f, a), (f, b))
\]
noting that $a : f^<E' \to E, b : f^<F' \to F$ are morphisms
in $\sM(\mcX)$ over $f$.
\end{cns}

\begin{prop}
For a prestack $\mcX \in \PSt_{/S}$,
the map $\partial_\mcX$ of \cref{defn:bndry-map} is a morphism of prestacks.
Let $\Psi$ again be the equivalence
$\sM_1'(\mcX) \to[\simeq] \sM_1(\mcX)$ of \cref{thm:arr-bun-arr-sec-equiv}.
Let $\partial_\mcX' : \sM_1'(\mcX) \to \sM(\mcX)^2$ be the projection of
of the fibre product defining $\sM_1'(\mcX)$.
Then, $\partial_\mcX \circ \Psi = \partial_\mcX'$ as $1$--morphisms.
\end{prop}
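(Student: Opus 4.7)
The plan is to handle the two assertions in turn. For the first --- that $\partial_\mcX$ is a morphism of prestacks --- I will verify functoriality and commutation with the structure projections directly from \cref{cns:mod-prest-arr-bun}. Given a composable pair $(u, E, F, s) \to[{(f, a, b)}] (u', E', F', s') \to[{(g, c, d)}] (u'', E'', F'', s'')$, its composite in $\sM_1(\mcX)$ is $(g \circ f, f^<c \circ a, f^<d \circ b)$, and applying $\partial_\mcX$ componentwise recovers, in $\sM(\mcX)^2$, the product of the images $((f, a), (f, b))$ and $((g, c), (g, d))$. Identities are sent to identities by inspection, and both a morphism $(f, a, b)$ and its image $((f, a), (f, b))$ project to the same $f : U \to U'$ in $\Aff_{/S}$, so compatibility with the structure projections is immediate.

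For the equality $\partial_\mcX \circ \Psi = \partial_\mcX'$, I will trace through the construction of $\Psi$ given in the proof of \cref{thm:arr-bun-arr-sec-equiv}. On an object $(u, s, v, w, \phi) \in \sM_1'(\mcX)$, $\Psi$ produces the triple $(u, v^<\mcE(\mcX), w^<\mcE(\mcX), s_1)$, so by \cref{defn:bndry-map} we obtain $\partial_\mcX(\Psi(u, s, v, w, \phi)) = ((u, v^<\mcE(\mcX)), (u, w^<\mcE(\mcX)))$. Under the identification of an object of $\sM(\mcX)$ over $u$ with its classifying bundle on $U \times \mcX$ via pullback of the universal family $\mcE(\mcX)$, this pair is $((u, v), (u, w))$, which is precisely the projection of $(u, s, v, w, \phi)$ onto the $\sM(\mcX)^2$ factor in the fibre product defining $\sM_1'(\mcX)$ --- that is, $\partial_\mcX'(u, s, v, w, \phi)$. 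The same tracing on a morphism $(f, a, b, c)$ gives $((f, b \times e_\mcX), (f, c \times e_\mcX))$ for $\partial_\mcX(\Psi(f, a, b, c))$, which under the same identification matches $((f, b), (f, c)) = \partial_\mcX'(f, a, b, c)$; the commutativity of the right face of the cube in \cref{eqn:arr-bun-arr-sec-equiv-cube} is exactly the compatibility needed.

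No genuine obstacle is expected: the content is careful bookkeeping between the ``intrinsic'' description of $\sM(\mcX)$ in terms of finite locally free sheaves on $U \times \mcX$ and its mapping-stack description $\Map_S(\mcX, \sM)$, together with the observation that $\Psi$ was built precisely so as to replace a classifying map $v : U \to \sM(\mcX)$ by the pulled-back bundle $v^<\mcE(\mcX)$. Once this identification is invoked consistently, the equality $\partial_\mcX \circ \Psi = \partial_\mcX'$ holds on objects and morphisms by construction, and hence as $1$--morphisms of prestacks over $S$.
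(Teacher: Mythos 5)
Your proposal is correct and follows essentially the same route as the paper's proof: the first claim is a direct functoriality check, and the second is obtained by tracing $\Psi$ on objects and morphisms and matching $(v^<\mcE(\mcX), w^<\mcE(\mcX))$ (resp.\ $(b \times e_\mcX, c \times e_\mcX)$) against the fibre-product projection composed with the identification $\Map_S(S, \sM(\mcX))^2 \simeq \sM(\mcX)^2$. The paper makes this identification explicit as the step through which $\partial_\mcX'$ factors, which is the same bookkeeping you describe.
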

\begin{proof}
That $\partial_\mcX$ is a functor and commutes with the projections
to $\Aff_{/S}$ is immediate. For the second claim, we first notice that
the map $\partial_\mcX' : \sM_1(\mcX) \to \sM(\mcX)^2$ factors through
the equivalence
$\Map_S(S, \sM(\mcX))^2 \to[\simeq] \Map_S(S, \sM(\mcX)^2) \to[\simeq]
\sM(\mcX)^2$ sending
an object $(v, w) : U \simeq U \times S \to \sM(\mcX)^2$ to
the object $(v(\id_{U}), w(\id_U))$. The fibre product projection
$\sM_1(\mcX) \to \Map_S(S, \sM_1(\mcX))^2$ sends an object
$(u, s, v, w, \phi)$ to $(v, w)$. Hence, the map
$\sM_1(\mcX) \to \sM(\mcX)^2$ is given by sending $(v, w)$ to
$(v(\id_U), w(\id_U)) = (v^<\mcE(\mcX), w^<\mcE(\mcX))$. From the construction
of the map $\Psi$, we see that this is precisely
$\partial_\mcX(\Psi(u, s, v, w, \phi))$.

Now, let $(f, a, b, c) : (u, s, v, w, \phi) \to (u', s', v', w', \phi')$
be a morphism in $\sM_1'(\mcX)$. $\Psi$ sends this to
$(b \times e_\mcX, c \times e_\mcX)$, which is then send by $\partial_\mcX$
to itself, noting that the morphisms in $\sM_1(\mcX)$ are morphisms
in $\sM(\mcX)$ satisfying a commutativity constraint. Then, $\partial_\mcX'$
sends it to the $2$--morphism $(b, c) : (v, w) \To (v', w') : U \to \sM(\mcX)^2$.
The map $\Map_S(S, \sM(\mcX))^2 \to[\simeq] \sM(\mcX)^2$ sends it to
$(b \times e_\mcX, c \times e_\mcX)$. This shows that
$\partial_\mcX(\Psi(f, a, b, c)) = \partial_\mcX'(f, a, b, c)$, as required.

\end{proof}

\begin{prop}\label{prop:bndry-map-fully-faithful}
The map $\partial_\mcX$ of \cref{defn:bndry-map} is a faithful functor.
\end{prop}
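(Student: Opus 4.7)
The plan is to use the explicit tuple description of morphisms in $\sM_1(\mcX)$ from \cref{cns:mod-prest-arr-bun}, which is legitimate because of the equivalence $\sM_1'(\mcX) \simeq \sM_1(\mcX)$ established in \cref{thm:arr-bun-arr-sec-equiv} together with the compatibility $\partial_\mcX \circ \Psi = \partial_\mcX'$ proved in the preceding proposition. Under this description, a morphism $(u, E, F, s) \to (u', E', F', s')$ is literally a triple $(f, a, b)$ consisting of $f : U \to U'$ in $\Aff_{/S}$ together with isomorphisms $a : f^<E' \to E$ and $b : f^<F' \to F$ subject to a commutativity constraint; none of the three components is redundant data.

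The key step is then to unwrap what $\partial_\mcX$ does on morphisms: by definition it sends $(f, a, b)$ to the pair $((f, a), (f, b))$ in $\sM(\mcX)^2$. Faithfulness means showing that if two parallel morphisms $(f, a, b)$ and $(f', a', b')$ between fixed objects of $\sM_1(\mcX)$ satisfy
\[
((f, a), (f, b)) = ((f', a'), (f', b'))
\]
in $\sM(\mcX)^2$, then $(f, a, b) = (f', a', b')$. Equality of pairs gives $(f, a) = (f', a')$ and $(f, b) = (f', b')$ as morphisms of $\sM(\mcX)$, and since a morphism in $\sM(\mcX)$ between fixed objects is determined by the pair consisting of its underlying $\Aff_{/S}$-morphism and the attached isomorphism of vector bundles, we read off $f = f'$, $a = a'$, and $b = b'$.

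I do not expect any genuine obstacle here: once the equivalence $\Psi$ of \cref{thm:arr-bun-arr-sec-equiv} is invoked to replace $\sM_1'(\mcX)$ by the concrete category of \cref{cns:mod-prest-arr-bun}, the faithfulness of $\partial_\mcX$ is immediate from the definitions, because $\partial_\mcX$ simply forgets the section $s$ and the commutativity condition but retains every other datum of a morphism. The only thing worth flagging is that one should work on the $\sM_1(\mcX)$-side rather than the $\sM_1'(\mcX)$-side; on the latter, the same faithfulness is less transparent because morphisms involve $2$-cells $a : s' \circ (f \times \id_\mcX) \To s$, and one would have to invoke \cref{prop:prest-prod-strict-nat} as in the proof of \cref{thm:arr-bun-arr-sec-equiv} to pin down $a$ from its components $b \times e_\mcX$ and $c \times e_\mcX$.
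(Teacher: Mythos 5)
Your proposal is correct and follows essentially the same route as the paper: both arguments observe that a morphism of $\sM_1(\mcX)$ is literally a tuple $(f,a,b)$ whose image under $\partial_\mcX$ is $((f,a),(f,b))$, so that the Hom-sets of $\sM_1(\mcX)$ are subsets of those of $\sM(\mcX)^2$ cut out by the commutativity constraint and $\partial_\mcX$ induces the inclusions of these subsets, which is faithfulness. Your closing remark about preferring the $\sM_1(\mcX)$-side over the $\sM_1'(\mcX)$-side matches the paper's implicit choice as well.
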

\begin{proof}
We begin by observing that the Hom sets of $\sM_1(\mcX)$ are subsets
of the corresponding Hom sets of $\sM(\mcX)^2$ consisting of elements
satisfying a commutativity constraint. Also, the functions of Hom sets induced
by the morphism $\partial_\mcX : \sM_1(\mcX) \to \sM(\mcX)^2$ are the
inclusions of these subsets. Hence, $\partial_\mcX$ is faithful.
\end{proof}

\begin{defn}\label{defn:arr-bun-fixed-src-trgt}
For any prestack $\mcY \in \PSt_{/S}$, any affine scheme $U \in \Aff_{/S}$ and any
morphism $(v, w) : U \to \sM(\mcY)^2$, we denote by $P^{\mcY}_{v, w}$,
the following fibre product:
\[\begin{tikzcd}
P^\mcY_{v, w} \ar[r] \ar[d] \ar[rd, phantom, "\lrcorner" very near start] &
\sM_1(\mcY) \ar[d] \\
U \ar[r, "{(v, w)}" below] &
\sM(\mcY)^2
\end{tikzcd}\]
where the right vertical map is projection of the fibre product defining
$\sM_1(\mcX)$, in light of \cref{thm:arr-bun-arr-sec-equiv}.
\end{defn}

\begin{prop}\label{prop:bndry-map-fibre-concrete}
In the context of \cref{defn:arr-bun-fixed-src-trgt},
if we denote $E := v^<\mcE(\mcX), F := w^<\mcE(\mcX)$, the underlying category of
$P^\mcY_{v, w}$ has the following concrete description:
\begin{itemize}
\item Objects are tuples $(r, H, K, s, \phi_{H}, \phi_{K})$ where:
  \begin{itemize}
  \item $r : V \to U$ is a morphism in $\Aff_{/S}$
  \item $H, K \in \Vect(V \times \mcX)$
  \item $s : H \to K$ is a morphism of locally free sheaves over $V \times \mcX$
  \item $\phi_{H} : H \to r^<E, \phi_{K} : K \to r^<F$ are isomorphisms of
    locally free sheaves over $V \times \mcX$
  \end{itemize}
\item Morphisms $(r, H, K, s, \phi_{H}, \phi_{K}) \to
  (r', H', K', s', \phi_{H'}, \phi_{K'})$ are tuples
  $(f, a, b)$ where:
  \begin{itemize}
  \item $f : V = \dom(r) \to V' = \dom(r')$ is a morphism in $\Aff_{/S/U}$
  \item $a : f^<H' \to H, b : f^<K' \to K$ are isomorphisms of locally free sheaves
    over $V \times \mcX$ making the following diagrams commute:
    \[\begin{tikzcd}
    f^<H' \ar[r, "a"] \ar[d, "f^<s'" left] & H \ar[d, "s"] \\
    f^<K' \ar[r, "b" below] & K
    \end{tikzcd}\hspace{1.5em}
    \begin{tikzcd}
    f^<H' \ar[r, "a"] \ar[d, "f^<\phi_{H'}" left] & H \ar[d, "\phi_H"] \\
    f^<(r')^<E = r^<E \ar[r, "\id_{r^<E}" below] & r^<E
    \end{tikzcd}\hspace{1.5em}
    \begin{tikzcd}
    f^<K' \ar[r, "b"] \ar[d, "f^<\phi_{K'}" left] & K \ar[d, "\phi_K"] \\
    f^<(r')^<F = r^<F \ar[r, "\id_{r^<F}" below] & r^<F
    \end{tikzcd}\]
  \end{itemize}
\end{itemize}
\end{prop}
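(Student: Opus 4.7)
The plan is to unwrap the definition of the $2$--fibre product defining $P^\mcY_{v, w}$ using the explicit description of fibre products of prestacks given in \cite[\href{https://stacks.math.columbia.edu/tag/0040}{Lemma 0040}]{stacks-project}, and identify the resulting data with the tuples described in the statement. After that, the matching of morphisms and the verification of the functoriality assignments are routine.

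First, I would describe the objects. By the Stacks Project lemma, an object of $P^\mcY_{v,w}$ consists of a triple $(r, \eta, \theta)$ where $r : V \to U$ is an object of $\Aff_{/S/U}$, $\eta$ is an object of $\sM_1(\mcY)$ over $V$, and $\theta$ is an isomorphism in $\sM(\mcY)^2$ between the image of $\eta$ under $\partial_\mcY$ and the image of $r$ under $(v, w)$. Using the description of $\sM_1(\mcY)$ from \cref{cns:mod-prest-arr-bun}, the object $\eta$ is a tuple $(r, H, K, s)$ with $H, K \in \Vect(V \times \mcY)$ and $s : H \to K$. The image of $r$ under $(v, w)$ corresponds to the pair $(r^<E, r^<F)$ by the universal property of $\sM(\mcY)$. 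Hence the isomorphism $\theta$ in $\sM(\mcY)^2$ over $\id_V$ decomposes into a pair of isomorphisms $\phi_H : H \to r^<E$ and $\phi_K : K \to r^<F$ of locally free sheaves over $V \times \mcY$. This recovers the object description in the statement.

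Next, I would describe the morphisms. A morphism $(r, \eta, \theta) \to (r', \eta', \theta')$ in the fibre product is a pair consisting of a morphism $f : V \to V'$ in $\Aff_{/S/U}$ and a morphism $\eta \to \eta'$ in $\sM_1(\mcY)$ over $f$, subject to the compatibility that the images of these data under $\partial_\mcY$ and $(v, w)$ agree modulo $\theta, \theta'$. Unwrapping the morphism description of $\sM_1(\mcY)$, the morphism $\eta \to \eta'$ is a pair $(a, b)$ with $a : f^<H' \to H$ and $b : f^<K' \to K$ isomorphisms making the first square in the proposition commute. The compatibility with $\theta, \theta'$ in $\sM(\mcY)^2$ splits into the two commutative squares involving $\phi_H, \phi_{H'}$ and $\phi_K, \phi_{K'}$, where one uses the strict identity $f^<(r')^<E = r^<E$ and likewise for $F$, since $r' \circ f = r$ holds on the nose in $\Aff_{/S/U}$.

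The main subtle point will be justifying that the isomorphism $\theta$ really does split into an independent pair $(\phi_H, \phi_K)$, and that the single compatibility condition in the $2$--fibre product really does split into the two independent squares for $\phi_H$ and $\phi_K$. Both reductions are a direct application of \cref{prop:prest-prod-strict,prop:prest-prod-strict-nat} applied to $\sM(\mcY)^2$, which ensure that morphisms and equalities of morphisms into the product $\sM(\mcY)^2$ are detected componentwise. Once these splittings are in place, assembling the bijection of object sets and morphism sets into a strict isomorphism of categories is immediate, and compatibility with the projections to $\Aff_{/S}$ is built in by construction.
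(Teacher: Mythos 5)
Your proposal is correct and follows essentially the same route as the paper's (very terse) proof: unwrap the $2$--fibre product via the Stacks Project Lemma 0040, use the concrete description of $\sM_1(\mcY)$ from \cref{cns:mod-prest-arr-bun} together with the fact that $(v,w)$ takes the values $(r^<E, r^<F)$ on objects and the identities on morphisms of $\Aff_{/S/U}$, and split the comparison isomorphism componentwise. Your explicit appeal to \cref{prop:prest-prod-strict} and \cref{prop:prest-prod-strict-nat} to justify the componentwise splitting is a reasonable elaboration of a step the paper leaves implicit.
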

\begin{proof}
This follows from the definition of $\sM_1(\mcX)$ given in
\cref{cns:mod-prest-arr-bun}, the observation that the map $(v, w)$
has the values $(r^<E, r^<F)$ on objects $r : V \to U$ of $\Aff_{/S/U}$ and
$(\id_{(r' \circ f)^<E}, \id_{(r' \circ f)^<F})$ on morphisms $f : V \to V'$ in
$\Aff_{/S/U}$, and then computing the explicit description of the fibre product
\cite[\href{https://stacks.math.columbia.edu/tag/0040}{Lemma 0040}]
{stacks-project}.
\end{proof}

\begin{prop}\label{prop:bndry-map-fibre-simple}
In the context of \cref{defn:arr-bun-fixed-src-trgt}, let
$E = v^<\mcE(\mcX), F := w^<\mcE(\mcX)$, and consider the category
$Q^\mcY_{v, w}$ defined by the following data:
\begin{itemize}
\item Objects are tuples $(r, s)$ where:
  \begin{itemize}
  \item $r : V \to U$ is a morphism in $\Aff_{/S}$
  \item $s : r^<E \to r^<F$ is a morphism of finite locally free sheaves on
    $V \times \mcX$
  \end{itemize}
\item Morphisms $(r, s) \to (r' s')$ are morphisms
    $f : V = \dom(r) \to V' = \dom(r')$ in $\Aff_{/S/U}$ such that
    $r' \circ f = r$ and $f^<s' = s$.
\end{itemize}
We have an equivalence of prestacks
$\Psi_{v, w}^\mcY : Q^\mcY_{v, w} \to P^\mcY_{v, w}$.
\end{prop}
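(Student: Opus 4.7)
The plan is to construct the comparison functor $\Psi_{v,w}^\mcY$ explicitly and show that it is fully faithful and essentially surjective on the underlying categories; since both $Q_{v,w}^\mcY$ and $P_{v,w}^\mcY$ are prestacks over $\Aff_{/S/U}$ and the construction will commute strictly with the projections to $\Aff_{/S/U}$, this yields an equivalence of prestacks. On objects, define
\[
\Psi_{v,w}^\mcY(r, s) := (r, r^<E, r^<F, s, \id_{r^<E}, \id_{r^<F}).
\]
On a morphism $f : (r, s) \to (r', s')$ of $Q_{v,w}^\mcY$ (so $r' \circ f = r$ and $f^<s' = s$), set $\Psi_{v,w}^\mcY(f) := (f, \id_{r^<E}, \id_{r^<F})$, where I would invoke the strict equality $f^<(r')^<E = r^<E$ built into \cref{prop:bndry-map-fibre-concrete}. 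All three commutativity squares required by \cref{prop:bndry-map-fibre-concrete} collapse either to identities (in the two $\phi$-squares) or to $s = f^<s'$ (in the $s$-square), which is exactly the hypothesis. Functoriality follows from the strict functoriality of $(-)^<$ with respect to composition in $\Aff_{/S}$.

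For essential surjectivity, given an object $(r, H, K, s, \phi_H, \phi_K)$ of $P_{v,w}^\mcY$, I would set $s_0 := \phi_K \circ s \circ \phi_H^{-1} : r^<E \to r^<F$ and exhibit the triple
\[
(\id_V, \phi_H^{-1}, \phi_K^{-1}) : (r, H, K, s, \phi_H, \phi_K) \longrightarrow \Psi_{v,w}^\mcY(r, s_0)
\]
as an isomorphism in $P_{v,w}^\mcY$. The two $\phi$-squares reduce to $\phi_H \circ \phi_H^{-1} = \id_{r^<E}$ and $\phi_K \circ \phi_K^{-1} = \id_{r^<F}$, while the $s$-square reads $s \circ \phi_H^{-1} = \phi_K^{-1} \circ s_0$, which is the definition of $s_0$.

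For fully faithfulness, I would analyze an arbitrary morphism $(f, a, b) : (r, r^<E, r^<F, s, \id, \id) \to (r', (r')^<E, (r')^<F, s', \id, \id)$ between two objects in the image of $\Psi_{v,w}^\mcY$. The two $\phi$-compatibility squares of \cref{prop:bndry-map-fibre-concrete} immediately force $a = \id_{r^<E}$ and $b = \id_{r^<F}$ (using the identifications $f^<(r')^<E = r^<E$ and $f^<(r')^<F = r^<F$). The remaining $s$-square then reduces to $s = f^<s'$, and the underlying morphism $f : V \to V'$ lies in $\Aff_{/S/U}$, i.e., satisfies $r' \circ f = r$. This is precisely the data of a morphism $(r, s) \to (r', s')$ in $Q_{v,w}^\mcY$, and the assignment on morphisms is manifestly the identity on this common underlying set, giving bijections of Hom-sets.

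The main obstacle, such as it is, is purely bookkeeping: juggling the three commutative diagrams in the description of morphisms in $P_{v,w}^\mcY$ and tracking the strict versus pseudofunctorial behaviour of the pullback $(-)^<$ along composites in $\Aff_{/S}$. No additional geometric input is needed, since the $\phi_H, \phi_K$ components already encode the witnessing of isomorphism classes of the universal bundles $E, F$, and the proposition is really just the statement that the fibre $P_{v,w}^\mcY$ can be rigidified by trivializing these witnesses.
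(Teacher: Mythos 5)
Your proposal is correct and follows essentially the same route as the paper: the same rigidifying functor $(r,s)\mapsto(r,r^<E,r^<F,s,\id,\id)$, the same collapse of the two $\phi$--squares to force $a=\id$, $b=\id$ and the $s$--square to $f^<s'=s$ for full faithfulness, and the same essential surjectivity witness via the conjugate $\phi_K\circ s\circ\phi_H^{-1}$ (the paper writes the isomorphism with $\phi_H,\phi_K$ rather than their inverses, which is only a matter of which direction of the morphism convention one records). No substantive difference.
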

\begin{proof}
We observe that for any object $(r, s) \in Q^\mcY_{v, w}$, we have the object
\[
\Psi^\mcY_{v, w}(r, s) := (r, r^<E, r^<F, s, \id_{r^<E}, \id_{r^<F})
\]
in $P^\mcY_{v, w}$. For two objects $(r, s), (r', s')$, consider
a morphism $(f, a, b) : \Psi^\mcY_{v, w}(r, s) \to \Psi^\mcY_{v, w}(r', s')$.
Then, the commutativity constraint in the description of morphisms
in \cref{prop:bndry-map-fibre-concrete}, we have the following commutative
diagrams:
\[\begin{tikzcd}
f^<(r')^<E \ar[r, "a"] \ar[d, "f^<s'" left] & r^<E \ar[d, "s"] \\
f^<(r')^<F \ar[r, "b" below] & r^<F
\end{tikzcd}\hspace{1.5em}
\begin{tikzcd}
f^<(r')^<E  = r^<E\ar[r, "a"] \ar[d, "f^<(\id_{(r)'^<E}) = \id_{r^<E}" left] &
r^<E \ar[d, "\id_{r^<E}"] \\
f^<(r')^<E = r^<E \ar[r, "\id_{r^<E}" below] &
r^<E
\end{tikzcd}\]\[
\begin{tikzcd}
f^<(r')^<F  = r^<F\ar[r, "b"] \ar[d, "f^<(\id_{(r)'^<F}) = \id_{r^<F}" left] &
r^<F \ar[d, "\id_{r^<F}"] \\
f^<(r')^<F = r^<F \ar[r, "\id_{r^<F}" below] &
r^<F
\end{tikzcd}\]
showing that $a = \id_{r^<E}$, $b= \id_{r^<F}$, and
that $f^<s' = s$ so that $f$ is a morphism in $Q^\mcY_{v, w}$. Hence, we can
set $\Psi^\mcY_{v, w}(f) := (f, \id_{r^<E}, \id_{r^<F})$. It is not hard to show
that $\Psi^\mcY_{v, w}$ is a morphism of prestacks. That it is fully faithful is
evident from the construction. Suppose, now, that we have an object
$q = (r, H, K, s, \phi_H, \phi_K)$ of $P^\mcY_{v, w}$. This gives us a unique
map $t : r^<E \to r^<F$ making the following diagram commute:
\[\begin{tikzcd}
H \ar[r, "\phi_H"] \ar[d, "s" left] & r^<E \ar[d, "t"] \\
K \ar[r, "\phi_K" below] & r^<F
\end{tikzcd}\]
It is, then, easy to see that $(\id_{\dom(r)}, \phi_H, \phi_K)$ is an
isomorphism of the object $q$ with $\Psi^\mcY_{v, w}(r, t)$. This shows that
$\Psi^\mcY_{v, w}$ is essentially surjective.
\end{proof}

\begin{notn}
In light of \cref{prop:bndry-map-fibre-simple},
when the maps $v, w$ are not important, we will sometimes write
$Q^\mcY_{E, F} \simeq P^\mcY_{E, F}$.
\end{notn}

\begin{prop}\label{prop:bndry-map-repr-alg-sp}
In the context of \cref{defn:arr-bun-fixed-src-trgt}, if
$\sM_1(\mcX)$ and $\sM(\mcX)$ are algebraic, then $P^\mcY_{v, w}$
is an algebraic space.
\end{prop}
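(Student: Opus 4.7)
The plan is to first establish $P^\mcY_{v, w}$ as an algebraic stack and then to upgrade this to an algebraic space by showing the associated fibre categories are discrete. By construction, $P^\mcY_{v, w}$ is the $2$-fibre product $U \times_{\sM(\mcX)^2} \sM_1(\mcX)$ along $\partial_\mcX$ (using the identification coming from \cref{thm:arr-bun-arr-sec-equiv}), whose base $U$ is a scheme and whose other corner $\sM_1(\mcX)$ is algebraic by hypothesis. The hypothesis that $\sM(\mcX)$ is algebraic makes $\sM(\mcX)^2$ algebraic as well, so $P^\mcY_{v, w}$ is a $2$-fibre product of algebraic stacks; by the standard closure of $\AlgSt$ under $2$-fibre products, it is itself an algebraic stack.

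Next I would use the concrete description of $P^\mcY_{v, w}$ via $Q^\mcY_{v, w}$ provided by \cref{prop:bndry-map-fibre-simple}. There, an object over $V \in \Aff_{/S}$ is a pair $(r, s)$ with $r : V \to U$ a morphism in $\Aff_{/S}$ and $s : r^<E \to r^<F$ a morphism of locally free sheaves, and a morphism $(r, s) \to (r', s')$ is precisely a morphism $f : V \to V'$ of $U$-schemes satisfying $f^<s' = s$. Restricting to the fibre over $V$ forces $f = \id_V$, and then the compatibility conditions give $r' = r$ and $s' = s$; hence the only morphism in such a fibre category is an identity. Consequently the fibre categories of $Q^\mcY_{v, w} \to \Aff_{/S}$, and thus of $P^\mcY_{v, w} \to \Aff_{/S}$ by the equivalence of \cref{prop:bndry-map-fibre-simple}, are discrete.

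Equivalently, this triviality of automorphism groups is a direct consequence of the faithfulness of $\partial_\mcX$ from \cref{prop:bndry-map-fully-faithful} together with the fact that $U$ has trivial automorphism groups: any automorphism of an object of the $2$-fibre product must lift an automorphism of its image in $U$ and an automorphism in $\sM_1(\mcX)$ that becomes an identity in $\sM(\mcX)^2$, and faithfulness of $\partial_\mcX$ forces the latter to be an identity too. A prestack on $\Aff_{/S}$ whose fibre categories are discrete is equivalent to a presheaf of sets, and a stack with this property is equivalent to a sheaf of sets. Combined with the algebraicity from the first paragraph, $P^\mcY_{v, w}$ is a sheaf of sets which is also an algebraic stack, and the standard equivalence between such objects and algebraic spaces delivers the claim. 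The only step with real content is the verification that the fibre categories are discrete; everything else is a formal invocation of the stability properties of $\AlgSt$ and of the sheaf-of-sets characterisation of algebraic spaces within algebraic stacks.
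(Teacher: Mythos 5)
Your proposal is correct. The paper's own proof is a two-line citation: by \cref{prop:bndry-map-fully-faithful} the boundary map $\partial_\mcX : \sM_1(\mcX) \to \sM(\mcX)^2$ is faithful, and a faithful $1$--morphism between algebraic stacks is representable by algebraic spaces (Stacks Project, Lemma 04Y5), so its base change to the scheme $U$ is an algebraic space. You reach the same conclusion by unpacking that citation in this special case: you first get algebraicity of $P^\mcY_{v,w}$ from closure of algebraic stacks under $2$--fibre products, and then verify directly that its fibre categories are setoids, either via the concrete model $Q^\mcY_{v,w}$ of \cref{prop:bndry-map-fibre-simple} (where morphisms in a fibre are visibly only identities) or, as you note, via the faithfulness of $\partial_\mcX$ combined with the discreteness of $U$; the criterion that an algebraic stack fibred in setoids is an algebraic space then finishes. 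The mathematical content is identical --- everything rests on \cref{prop:bndry-map-fully-faithful} and the algebraicity hypotheses --- but your version is self-contained where the paper outsources the ``faithful implies representable by algebraic spaces'' step, at the cost of additionally invoking \cref{prop:bndry-map-fibre-simple}, which the paper's proof does not need. Both are complete; yours is arguably more transparent about \emph{why} the fibre has no stabilizers.
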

\begin{proof}
This follows from \cref{prop:bndry-map-fully-faithful} and
\cite[\href{https://stacks.math.columbia.edu/tag/04Y5}{Lemma 04Y5}]
{stacks-project}.
\end{proof}

\begin{warn}
In \cref{prop:bndry-map-repr-alg-sp}, the algebraicity of $\sM_1(\mcX)$
is a requirement and not a consequence.
\end{warn}

We will now see that in the case that $\mcY$ is a scheme $X$ flat, finitely
presented and projective over $S$, the boundary map is ``well-behaved''.

\begin{thm}\label{thm:X-fppf-proj-bndry-map-aff-fp}
Let $X \to S$ be a finitely presented, flat and projective morphism of schemes.
Then, $\partial_X : \sM_1(X) \to \sM(X)^2$ is affine and of finite presentation.
\end{thm}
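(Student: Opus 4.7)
The plan is to verify the claim by pulling back $\partial_X$ along arbitrary affine test objects. Let $(v, w) : U \to \sM(X)^2$ be a $1$-morphism with $U \in \Aff_{/S}$, and write $E := v^<\mcE(X), F := w^<\mcE(X)$ for the corresponding finite locally free sheaves on $U \times X$. By \cref{defn:arr-bun-fixed-src-trgt}, the $2$-fibre product $U \times_{\sM(X)^2} \sM_1(X)$ is the prestack $P^X_{v,w}$, and by \cref{prop:bndry-map-fibre-simple} this is equivalent to $Q^X_{v,w}$, whose objects over $r : V \to U$ are the morphisms $r^<E \to r^<F$ in $\Vect(V \times X)$. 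Since affineness and finite presentation of a morphism of stacks are fppf-local on the target, it suffices to show that $Q^X_{v,w}$ is represented by an affine $U$-scheme of finite presentation for every such $(v, w)$.

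Using the finite local freeness of $E$, the functor of points of $Q^X_{v,w}$ rewrites as
\[
V \mapsto \Hom_{\mcO_{V \times X}}(r^<E, r^<F)
        = \Gamma(V \times X, r^<[E, F])
        = \Gamma(V, (\pi_V)_* (r^<[E, F])),
\]
where $\pi_V : V \times X \to V$ denotes the projection and $[E, F] = E^\vee \otimes F$ is finite locally free on $U \times X$, hence finitely presented and flat over $U$. Since $X \to S$ is projective, flat and of finite presentation, so is $\pi_U : U \times X \to U$, and Grothendieck's theorem on cohomology and base change then produces a bounded complex $P^\bullet$ of finite locally free $\mcO_U$-modules together with functorial isomorphisms $R\pi_{V*}(r^<[E, F]) \simeq r^* P^\bullet$ in the derived category of $V$, for every $r : V \to U$. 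Taking $H^0$ yields $(\pi_V)_*(r^<[E, F]) \cong \ker{r^* P^0 \to r^* P^1}$, so the functor $Q^X_{v,w}$ is naturally isomorphic to $V \mapsto \ker{\Gamma(V, r^* P^0) \to \Gamma(V, r^* P^1)}$.

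This last functor is represented by the fibre product $\bV(P^0) \times_{\bV(P^1)} U$, where $\bV(P^0) \to \bV(P^1)$ is the $U$-morphism induced by the differential $d : P^0 \to P^1$ and $U \to \bV(P^1)$ is the zero section, by \cref{prop:geom-vec-bun-Gro-cons}. Since $\bV(P^0)$ and $\bV(P^1)$ are affine $U$-schemes of finite presentation and the zero section is a closed immersion of finite presentation, this fibre product is affine and of finite presentation over $U$, which suffices. The main obstacle is the clean invocation of Grothendieck's cohomology-and-base-change theorem on the affine base $U$: one must ensure that the complex $P^\bullet$ can be taken to consist of finite locally free $\mcO_U$-modules whose flat pullback computes $R\pi_{V*}(r^<[E, F])$ functorially in $V$. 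This uses both that projectivity, flatness and finite presentation of $X \to S$ are inherited by $\pi_U$, and that the finite local freeness of $E, F$ makes $[E, F]$ a vector bundle, hence finitely presented and $U$-flat.
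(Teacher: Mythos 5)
Your argument is correct, but it takes a different route from the paper. The paper identifies $P^X_{v,w}$ with the relative mapping stack $\Map_U(U \times X, \sV(X) \times_{\sB(X)^2} (U \times X)) \simeq \Map_U(U\times X, \bV[E,F])$ and then quotes Wang's Lemma~3.1.4 (a Weil-restriction statement: mapping stacks into an affine, finitely presented target along a projective, flat, finitely presented base are affine and of finite presentation) as a black box. You instead unpack the functor of points directly as $V \mapsto \Gamma(V, (\pi_V)_*(r^<[E,F]))$ and represent it by hand via Grothendieck's cohomology-and-base-change complex, realizing $P^X_{v,w}$ as the kernel scheme $\bV(P^0)\times_{\bV(P^1)} U$ of a map of vector bundles on $U$. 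The two arguments ultimately rest on the same input (both Wang's lemma and your construction come down to the theorem on the exchange of cohomology and base change, which is where projectivity, flatness and finite presentation of $X \to S$ enter), but yours is more self-contained and yields an explicit linear-algebraic presentation of the fibre, whereas the paper's is shorter and stays at the level of mapping stacks, which is the formalism it reuses elsewhere. The only points worth being careful about in your write-up are standard: the identification $\Hom(r^<E, r^<F) = \Gamma(r^<[E,F])$ needs $E$ finite locally free (which you invoke, and which is \cref{prop:pullback-Hom} in the paper), and in the non-noetherian setting one should cite the finitely presented version of the base-change theorem to get $P^\bullet$ as a bounded complex of finite locally free $\mcO_U$-modules concentrated in non-negative degrees; both are satisfied here.
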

\begin{proof}
We have to show that for any morphism $(v, w) : U \to \sM(X)^2$ where
$U \in \Aff_{/S}$, the map $P^X_{v, w} \to U$ is affine and of finite
presentation.
We obtain the following pasting of Cartesian squares by
\cref{thm:arr-bun-arr-sec-equiv} and the definition
(\ref{defn:arr-bun-fixed-src-trgt}) of $P^X_{v, w}$:
\[\begin{tikzcd}
P^X_{v, w} \ar[r] \ar[d] \ar[rd, phantom, "\lrcorner" very near start] &
\sM_1(X) \ar[r] \ar[d] \ar[rd, phantom, "\lrcorner" very near start] &
\Map_S(X, \sV(X)) \ar[d] \\
U \ar[r, "{(v, w)}" below] &
\sM(X)^2 \ar[r] &
\Map_S(X, \sB(X)^2)
\end{tikzcd}\]
The same argument as in the proof of theorem \cite[Lemma 3.2.1]{Wang-BunG}
shows that $P^X_{v, w}$ is equivalent to the stack
$\Map_U(U \times X, \sV(X) \times_{\sB(X)^2} (U \times X))$. However,
by \cref{prop:geom-vec-bun-Gro-cons}, $\sV(X) \times_{\sB(X)^2} (U \times X)$
is equivalent to $\int [E, F]$, where $E = v^<\mcE(X), F = w^<\mcE(X)$.
Furthermore, since $[E, F]$ is a finite locally free $\mcO_{U \times X}$--module
$\pi_A : \int [E, F] \to U \times X$ is affine by
\cref{prop:vec-bun-proj-affine}. By passing to charts, we can also show that
$\pi_A$ is of finite presentation. Finally, the projection
$pr_1 : U \times X \to U$ is finitely presented, flat and projective,
as it is a base change of the finitely presented, flat and projective morphism
$X \to S$. Lemma 3.1.4 of \cite{Wang-BunG} now applies and we can conclude that
the map $P^X_{v, w} \simeq \Map_U(U \times X, \int [E, F]) \to U$ is an affine
morphism of finite presentation, as required.
\end{proof}


\section{Arrow Bundles on Formal Groupoids}\label{sec:ArrowBunFormalGrpd}

We now have all the machinery we need to construct and study the moduli
stack of connection triples. We will address this in some generality: we will
take the approach of \cites{NonAbHodgeFilt} in constructing the stack
representing the non-Abelian Hodge filtration. Just like the moduli stack
of vector bundles on certain formal groupoids recover the moduli stacks
of connections, Higgs bundles, etc., we will consider
the moduli stack of vector bundle triples on those same formal groupoids
and show that they recover the desired moduli stacks of morphisms
of the respective objects: namely, $\lambda$--connections for a varying
parameter $\lambda$, usual connections (or, $1$--connections) and
Higgs bundles (or, $0$--connections).

\subsection{Review of Formal Groupoids}

We begin by reviewing formal groupoids both for the convenience of the reader
and to illustrate the difficulty in proving the algebraicity of the moduli
stack of connection triples using the same approach for the moduli stack
of vector bundle triples.

\begin{notn}
We will write $\FSch$ for the category of formal schemes over $k$.
\end{notn}

\begin{defn}[Formal Groupoid]
\label{defn:form-cat}
A formal category over $S$ is a tuple $(X, \sF, s, t, c, i)$ forming an
internal category in $\FSch_{/S}$, and satisfying:
\begin{enumerate}
\item $X$ is a scheme in $\Sch_{/S}$,
\item $i : X \to \sF$ is a closed immersion realizing $X$ as the underlying
scheme of $\sF$.
\end{enumerate}
This data is called a formal groupoid if, in addition:
\begin{enumerate}[resume]
\item for each $U \in \Sch_{/S}$, $(X(U), \sF(U), s_U, t_U, c_U, i_U)$
is a groupoid (internal to $\Set$).
\end{enumerate}
A formal category as above is said to be smooth if
\begin{enumerate}[resume]
\item the structure map $X \to S$ is smooth,
\item the morphisms $s, t : \sF \to X$ are formally smooth.
\end{enumerate}
\end{defn}

\begin{notn}
We will write a formal category as above simply as $(X, \sF)$,
when the structure maps are clear from context.
\end{notn}

For convenience, we make the following definition:
\begin{defn}[Formal Stack]
\label{defn:form-st}
A formal stack is a stack $\mathcal{X} \in \St_{/S}$ such that there exists
a formal groupoid $(X, \sF)$ and a $2$--coequalizer diagram in $\St_{/S}$:
\[\begin{tikzcd}
\sF \ar[r, shift left, "s"] \ar[r, shift right, "t" below] &
X \ar[r] &
\mathcal{X}
\end{tikzcd}\]
presenting $\mathcal{X}$ as a quotient stack.
We call $\mathcal{X}$ the stack associated to the formal groupoid
$(X, \sF)$.
\end{defn}

\begin{warn}
A formal stack is not necessarily a formal algebraic stack as defined in
\cite[Definition 5.3]{FormalAlgSt}. For instance, it may have a digonal
not representable by algebraic spaces --- see \cref{prop:Hodge-st-diag-non-rep}
--- which contradicts \cite[Lemma 5.12.]{FormalAlgSt}.
On the other hand, a formal algebraic stack is not necessarily a formal stack
since it may be a quotient of formal algebraic spaces more general than formal
schemes \cite[47]{FormalAlgSt}.
\end{warn}

\begin{notn}
In the context of the above definition, if the formal groupoid $(X, \sF)$
is clear from context, then we write $\mathcal{X}$ as $X_\sF$.
\end{notn}

There are three main examples of interest to us \cite[31---33]{NonAbHodgeFilt}.
The rough idea behind all of these is that a quasicoherent sheaf on
a formal stack is $(X, \sF)$ is a quasicoherent sheaf on $X$ along with
isomorphisms between the stalks encoded by $\sF$. By varying $\sF$,
and consequently the isomorphisms of stalks, we can recover connections and
Higgs bundles.

\begin{exm}[de Rham Stack]
\label{exm:dR-st}
If $X \to S$ is separated, then the diagonal $\Delta_{X/S} : X \to X \times_S X$
is a closed immersion and we take $\sF \to[{(s, t)}] X \times_S X$ to be the
formal completion of $X \times_S X$ along the set theoretic image
$\Delta_{X/S}(X)$. The composition morphisms $c : \sF \times_X \sF \to \sF$
is the one induced by the map
$(X \times_S X) \times_X (X \times_S X) \to X$.
The identity morphism $i : X \to \sF$
is simply the closed immersion into the formal completion.
We denote $\sF$ by $\sF_{dR}$ and the associated stack over $S$, by
$X_{dR} \to S$, in this case.
\end{exm}

\begin{exm}[Dolbeault Stack]
\label{exm:Dol-st}
If $X \to S$ is again separated, then the diagonal
$\Delta_{X/S} : X \to X \times_S X$ is again a closed immersion. Furthermore,
any section of a separated morphism is a closed immersion. Since the
projection $T(X \times_S X) \to X \times_S X$ of the tangent bundle of
$X \times_S X$ is affine and hence separated, the zero section
$0_X : X \to T(X \times_S X)$ is a closed immersion. Therefore, the composite
\[
\Delta' : X \to[\Delta_{X/S}] X \times_S X \to[0_X] T(X \times_S X)
\]
is a closed immersion.
We can then take $\sF \to T(X \times_S X)$ to be the
formal completion of $T(X \times_S X)$ along the set theoretic image
$\Delta'(X)$.
Then, the map $\sF \to[{(s, t)}] X \times_S X$ is obtained by
composing with the bundle projection.
The composition morphism $c : \sF \times_X \sF \to \sF$ is induced by
the addition morphism
\[
+ : T(X \times_S X) \times_{X \times_S X} T(X \times_S X) \to T(X \times_S X)
\]
The identity morphism $i : X \to \sF$
is again the closed immersion into the formal completion.
We denote $\sF$ by $\sF_{Dol}$ and the associated stack over $S$ by
$X_{Dol} \to S$, in this case.
\end{exm}

\begin{exm}[Hodge Stack]
\label{exm:Hodge-st}
Take a separated morphism of schemes $X \to T$ (for us, $T$ will
mainly be $\Spec(k)$),
consider a formal groupoid over $S := \bA^1_T = \bA^1_\bZ \times_\bZ T$
whose scheme of objects
is $pr_2 : X \times_T \bA^1_T \to \bA^1_T$. The formal scheme of morphisms is
defined as follows. We take the blow up $B \to X \times_T X \times_T \bA^1_T$ of
$(X \times_T \bA^1_T) \times_{\bA^1_T} (X \times_T \bA^1_T)
\cong (X \times_T X \times_T \bA^1_T)$ along the set theoretic image of the
map
$\Delta_{X/T} \times_T 0 : X \cong X \times_T T
\to X \times_T X \times_T \bA^1_T$ where
$0 : T \to \bA^1_T$ is the pullback of the map
$\Spec(\bZ[x] \to \bZ : x \mapsto 0)$ along the structure map
$T \to \Spec(\bZ)$,
and the blow up $B' \to X \times_T X$ of $X \times_T X$ along the image of
$\Delta_{X/T}$.
These fit into a commutative square:
\[\begin{tikzcd}
B' \ar[r] \ar[d] &
B \ar[d] \\
X \times_T X  \ar[r, "{(\id, 0)}"] & X \times_T X \times \bA^1_T
\end{tikzcd}\]
where the top and bottom arrows are both closed embeddings.
Then, $B'$ is the strict transform of $\im(\id_{X \times_T X}, 0)$
in the blow up of $X \times_T X \times_s \bA^1_T$ along
$\im(\Delta_{X/T} \times_T 0)$ by
\cite[\href{https://stacks.math.columbia.edu/tag/080E}{Lemma 080E}]
{stacks-project}.
We take $Y$ to be the complement of $B'$ in $B$ --- note that
$Y$ is an open subscheme of $B$ since the image of $B'$ is closed.
We choose a closed embedding $\Delta' : X \times_T \bA^1_T \to Y$ making
the following diagram commute:
\[\begin{tikzcd}[column sep=huge]
& Y \ar[d] \\
X \cong X \times_T T
  \ar[r, "\Delta_{X/T} \times_T 0" below]
  \ar[ru, "\Delta'" above left] &
X \times_T X \times_T \bA^1_T
\end{tikzcd}\]
Then, the formal scheme of morphisms is taken to be the formal completion
$\sF \to Y$ of $Y$ along $\Delta'(X)$. The map
$\sF \to[{(s, t)}] X \times_T X \times_T \bA^1_T$ is given by the two
projections $Y \to X \times_T X \times_T \bA^1_T \to X \times_T \bA^1_T$.
The composition map $c : \sF \times_{X \times_T \bA^1_T} \sF \to \sF$
is given by composition with the map:
\[
(X \times_T X \times_T \bA^1_T) \times_{X \times_T \bA^1_T}
(X \times_T X \times_T \bA^1_T) \to (X \times_T X \times_T \bA^1_T)
\]
The identity morphism $i : X \times_T \bA^1_T \to \sF$
is again the closed immersion into the formal completion.
We will write $\sF$ as $\sF_{Hod}$ and the associated stack over $S = \bA^1_T$
as $X_{Hod} \to \bA^1_T$, in this case.
\end{exm}

These stacks are related to each other by the following well known fact which
is the main connection with non-Abelian Hodge theory.

\begin{prop}[{\cite[33]{NonAbHodgeFilt}}]
Let $X \to T$ be a separated morphism of schemes and consider
$X_{dR}$ and $X_{Dol}$ by taking $S = T$ in \cref{exm:dR-st} and
\cref{exm:Dol-st}, respectively.
Also consider $X_{Hod}$ by taking $S = \bA^1_T$ in
\cref{exm:Hodge-st}. Then, for any closed point $\lambda : T \to \bA^1_T$,
the fibre $X_{Hod, \lambda}$ of $X_{Hod} \to \bA^{1}_T$ over $\lambda$ is
equivalent as a stack to $X_{dR}$ when $\lambda \neq 0$, and is equivalent to
$X_{Dol}$, when $\lambda = 0$.
\end{prop}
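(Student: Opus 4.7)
The plan is to prove both equivalences simultaneously by reducing to a claim about the formal groupoid $\sF_{Hod}$ and its base change along the closed point $\lambda : T \to \bA^1_T$. Specifically, I will show that (i) forming the associated stack of a formal groupoid is compatible with base change along $\lambda$, so that $X_{Hod,\lambda}$ is presented by the pulled-back formal groupoid over $X$, and (ii) this pulled-back formal groupoid recovers $(X, \sF_{dR})$ when $\lambda \neq 0$ and $(X, \sF_{Dol})$ when $\lambda = 0$, with matching source, target, and composition data.

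For (i), I would observe that in \cref{defn:form-st}, $X_\sF$ is defined as a $2$-coequalizer in $\St_{/S}$. The pullback functor $\lambda^* : \St_{/\bA^1_T} \to \St_{/T}$ is a left adjoint (to post-composition with $\lambda$), so it commutes with $2$-colimits; in particular it commutes with the $2$-coequalizer presenting $X_{Hod}$. Since $(X \times_T \bA^1_T) \times_{\bA^1_T, \lambda} T \cong X$, this reduces the problem to identifying $\sF_{Hod} \times_{\bA^1_T, \lambda} T$ as a formal groupoid over $X$, together with its structure maps.

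For $\lambda \neq 0$, the blow-up $B \to X \times_T X \times_T \bA^1_T$ is an isomorphism over the open complement of the center $\Delta_{X/T}(X) \times_T \{0\}$, which contains $X \times_T X \times_T (\bA^1_T \setminus \{0\})$; hence $B \times_{\bA^1_T, \lambda} T \cong X \times_T X$. The strict transform $B'$ is supported over $\{0\} \subset \bA^1_T$, so $B' \times_{\bA^1_T, \lambda} T = \emptyset$, and therefore $Y \times_{\bA^1_T, \lambda} T \cong X \times_T X$. Because base change along a closed immersion preserves formal completion along a closed subscheme (the completion being the pro-system of infinitesimal neighborhoods), the fibre of $\sF_{Hod}$ over $\lambda$ is the formal completion of $X \times_T X$ along $\Delta_{X/T}(X)$, which is precisely $\sF_{dR}$ of \cref{exm:dR-st}. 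The source, target, and composition maps restrict correctly by construction, yielding $X_{Hod,\lambda} \simeq X_{dR}$.

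For $\lambda = 0$, the construction $Y = B \setminus B'$ is exactly the classical deformation-to-the-normal-cone of the closed immersion $\Delta_{X/T} : X \hookrightarrow X \times_T X$. The standard theorem then gives $Y \times_{\bA^1_T, 0} T \cong C_{\Delta_{X/T}}(X \times_T X)$, the normal cone, which (in the regularly immersed case and in general after a tautological identification) embeds into $T(X \times_T X)$ via the zero section, matching the ambient scheme of $\sF_{Dol}$ in \cref{exm:Dol-st}. The formal completion of $Y$ along $\Delta'(X \times_T \bA^1_T)$ then restricts to the formal completion along $\Delta'(X)$ described in \cref{exm:Dol-st}, and the composition induced by $(X \times_T X \times_T \bA^1_T) \times_{X \times_T \bA^1_T}(X \times_T X \times_T \bA^1_T) \to X \times_T X \times_T \bA^1_T$ specializes at $\lambda = 0$ to the addition map on the normal cone, which agrees with composition in $\sF_{Dol}$. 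The main obstacle is the explicit affine-local identification at $\lambda = 0$: one must work in charts of the blow-up near the exceptional divisor to verify that the source, target, and composition maps of the restricted formal groupoid coincide with those of $\sF_{Dol}$, as the transparency of the $\lambda \neq 0$ case is lost once one has to interpret the geometry of the strict transform and the normal cone.
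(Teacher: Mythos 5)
First, a point of comparison: the paper does not prove this proposition at all --- it is stated as a ``well known fact'' and attributed to \cite[33]{NonAbHodgeFilt} --- so there is no in-paper argument to measure you against. On its own terms, your overall strategy (base-change the groupoid presentation along $\lambda$ and identify the fibres of the deformation-to-the-normal-cone construction) is the standard route. One small correction in step (i): $\lambda^*$ is the \emph{right} adjoint of post-composition $\lambda_!$, not its left adjoint; the reason pullback commutes with the $2$--coequalizer is that colimits of stacks are universal (stable under base change), or equivalently that $\lambda^*$ admits a further right adjoint. With that fixed, your $\lambda \neq 0$ case is essentially complete: the centre of the blow-up lies over $0 \in \bA^1_T$, so $Y \times_{\bA^1_T, \lambda} T \cong X \times_T X$, $B'$ contributes nothing, and the completion along the diagonal with the pair-groupoid composition is literally $\sF_{dR}$.

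The genuine gap is the $\lambda = 0$ case, which you yourself defer (``the main obstacle is the explicit affine-local identification''), and the one identification you do assert there is off. The special fibre of $Y$ is the normal cone $C_{\Delta_{X/T}}(X \times_T X)$, which for smooth $X \to T$ is the normal bundle of the diagonal, i.e.\ a copy of $T_{X/T}$; this does \emph{not} sit inside $T(X \times_T X)$ ``via the zero section'' --- the zero section of $T(X \times_T X)$ is $X \times_T X$ itself, and the relevant comparison is an embedding $N_{\Delta} \hookrightarrow T(X \times_T X)|_{\Delta}$, which is a different map. More seriously, matching the special fibre with $\sF_{Dol}$ \emph{as defined in \cref{exm:Dol-st}} --- the formal completion of the total space $T(X \times_S X)$ along $\Delta'(X)$ --- is not automatic: that completion a priori records all normal directions of $\Delta'(X)$ inside $T(X \times_S X)$, not only the fibre directions of the normal bundle of the diagonal, so the two formal groupoids are not visibly isomorphic, and the agreement of source, target and composition is exactly the content you would need to check in charts. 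Until that computation is carried out (or the statement is reduced outright to the cited result of Simpson), the $\lambda = 0$ half of the proposition remains unproved.
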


We also record here some basic facts about these stacks that make it difficult
to apply some of the techniques of \cite{ModQuivBun} directly.

\begin{prop}\label{prop:Hodge-st-diag-non-rep}
Taking $T = \Spec(k)$ for some algebraically closed field $k$ in the
context of \cref{exm:Hodge-st},
$X_{Hod}$ does not have a diagonal representable
by algebraic spaces, and is, hence, not a formal algebraic stack.
\end{prop}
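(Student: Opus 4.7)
The plan is to exhibit a $k$-point $\xi \in X_{Hod}(k)$ whose automorphism sheaf is a positive-dimensional formal disk $\Spf k[[v_1, \dots, v_n]]$, then to argue that such a functor cannot be an algebraic space. We assume $n := \dim X \geq 1$; the statement is vacuous otherwise.

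First, fix a $k$-point $x \in X$ and let $\xi : \Spec(k) \to X \times_k \bA^1_k \to X_{Hod}$ be the composite induced by $(x, 0)$ and the quotient map for the groupoid $\sF_{Hod} \rightrightarrows X \times \bA^1_k$. Base-changing the diagonal $\Delta_{X_{Hod}}$ along $(\xi, \xi)$ yields the automorphism sheaf of $\xi$; by the standard description of isomorphism sheaves for quotients of groupoids, this coincides with the fibre of $\sF_{Hod} \to X \times X \times \bA^1_k$ over $(x, x, 0)$, viewed as a functor on $\Aff_{/\Spec(k)}$.

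Next, we identify this fibre as $\Spf k[[v_1, \dots, v_n]]$. Recall that $\sF_{Hod}$ is the formal completion of an open subscheme $Y \subseteq B$, where $B$ is the blow-up of $X \times X \times \bA^1_k$ along the center $Z = (\Delta \times 0)(X)$, completed along $\Delta'(X \times \bA^1_k)$. Since $Z$ has codimension $n + 1$ in $X \times X \times \bA^1_k$, the exceptional fibre of $B$ over $(x, x, 0)$ is $\bP^n_k$. The strict transform $B'$ of $\{\lambda = 0\}$ meets this exceptional fibre in the hyperplane $\bP^{n-1}_k$ corresponding to tangent directions within $\{\lambda = 0\}$ modulo the diagonal, so the fibre of $Y = B \setminus B'$ over $(x, x, 0)$ is the affine space $\bA^n_k$. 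Formally completing at the image of $\Delta'(x, 0)$ gives the desired formal disk.

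Finally, we show $\Spf k[[v_1, \dots, v_n]]$ is not an algebraic space. Its functor of points on reduced $k$-algebras is trivial, since topologically nilpotent elements in a reduced ring vanish; so if it were an algebraic space $F$, then $F_{\mathrm{red}} = \Spec k$, forcing $F = \Spec A$ for some Artinian local $k$-algebra $A$ with residue field $k$. For such $A$, we have $\mathfrak{m}_A^N = 0$ for some $N$, whence $F(\Spec k[\epsilon]/\epsilon^{N+1}) = \Hom_{k\text{-alg}}(A, k[\epsilon]/\epsilon^{N+1})$ is a scheme of bounded dimension independent of $N$. In contrast, $\Spf k[[v_1, \dots, v_n]](\Spec k[\epsilon]/\epsilon^{N+1})$ is an affine space of dimension $nN$, unbounded in $N$, yielding the desired contradiction. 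The main obstacle is the fibre computation in step two: we must trace through blow-up charts around $(x, x, 0)$ carefully enough to verify that $B'$ exactly cuts out the claimed hyperplane in the exceptional fibre.
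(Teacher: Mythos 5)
Your overall strategy coincides with the paper's: pick a $k$--point of $X_{Hod}$ lying over $\lambda = 0$, identify its automorphism sheaf with the fibre of $\sF_{Hod} \to X \times_k X \times_k \bA^1_k$ over $(x,x,0)$, compute that fibre from the blow-up geometry as a formal disk, and then argue that a positive-dimensional formal disk is not an algebraic space. Your blow-up computation (exceptional fibre $\bP^n_k$, strict transform of $\{\lambda = 0\}$ meeting it in the hyperplane $\bP^{n-1}_k$, so the fibre of $Y$ is $\bA^n_k$ and the completion at the point is $\Spf k[[v_1,\dots,v_n]]$) is correct, and in fact gets the number of variables right where the paper writes $k[[x_0,\dots,x_n]]$.

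The problem is in the final step. First, ``forcing $F = \Spec A$ for some Artinian local $k$--algebra'' elides two points: that a quasi-separated algebraic space whose reduction is $\Spec k$ is an affine scheme (the paper devotes two citations to exactly this), and that a ring with a unique prime ideal is Artinian, which is not automatic (it needs Noetherianity; here it can be extracted from the functor isomorphism, which forces $A$ to be generated over $k$ by finitely many nilpotents, but you do not say this). More seriously, the claim that $\Hom_{k\text{-alg}}(A, k[\eps]/\eps^{N+1})$ ``is of bounded dimension independent of $N$'' is false: already for $A = k[x]/(x^2)$ the set $\{t \in k[\eps]/\eps^{M} : t^2 = 0\}$ has dimension $\lfloor M/2 \rfloor$, which is unbounded in $M$. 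So the contradiction does not go through as stated. It is repairable: since $\f{m}_A^N = 0$, every $k$--algebra map $A \to k[\eps]/\eps^{N+1}$ sends $\f{m}_A$ into $(\eps^2)$ (if $t^N = 0$ in $k[\eps]/\eps^{N+1}$ then $N \cdot v(t) \ge N+1$, hence $v(t) \ge 2$), so the $(N+1)$--jets of $\Spec A$ inject into $(\eps^2 k[\eps]/\eps^{N+1})^n$, of dimension $n(N-1) < nN$; equivalently, the relation $a^N = 0$ satisfied by a minimal generator $a$ of $\f{m}_A$ fails for the tuple $(\eps, 0, \dots, 0)$. With that sentence replaced, your argument closes. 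The paper instead concludes at the level of locally ringed spaces, observing that the putative coordinate ring $k[[v_1,\dots,v_n]]$ has more than one prime ideal while $\Spec A$ must have a single point.
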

\begin{proof}
We consider a $T= \Spec(k)$--point
$x \in X_{Hod}(\Spec(k))$ that factors as a map
$\Spec(k) \to[{(x', 0)}] X \times_k \bA^1_k \to X_{Hod}$,
and its stabilizer $\stb(x)$. Recalling that the preimage of a point in a
blow-up is a projective space, we have that the stabilizer
fits into the following pasting of Cartesian squares:
\[\begin{tikzcd}
\stb(x) \ar[r] \ar[d] \ar[rd, phantom, "\lrcorner" very near start] &
\bP^n_k \setminus \bP^{n - 1}_k \ar[r] \ar[d]
  \ar[rd, phantom, "\lrcorner" very near start]&
\Spec(k) \ar[d, "{(x', x', 0)}"] \\
\sF_{Hod} \ar[r] & Y \ar[r] & X \times_k X \times_k \bA^1_k
\end{tikzcd}\]
Now, $\sF_{Hod}$ is the formal completion of $Y$ along $\Delta'(X)$, while
the preimage of $\Delta'(X)$ in $\bP^n_k \setminus \bP^{n - 1}_k$
is a single point $x''$ lying over $(x', x', 0)$. By the compatibility of
formal completions with fibre products
\cite[\href{https://stacks.math.columbia.edu/tag/0APV}{Lemma 0APV}]
{stacks-project}, we have that $\stb(x)$ is the formal completion
of $\bP^{n}_k \setminus \bP^{n - 1}_k$ along a point, which is just
the formal completion of an affine chart containing that point along that
point. That is, up to change of coordinates, we have:
\[
\stb(x) = \Spf(k[[x_0, \dots, x_n]])
\]
where $\Spf$ denotes the formal spectrum functor. If $\stb(x)$ were
representable by an algebraic space, then it would have to be a scheme
by \cite[Corollary 3.1.2]{NagataComp} as the reduction $\stb(x)_{red}$ is
$\Spec(k)$. $\stb(x)$ would further have to be an affine scheme by
\cite[\href{https://stacks.math.columbia.edu/tag/06AD}{Lemma 06AD}]
{stacks-project}.
However, it is easy to see that
$\Spf(k[[x_0, \dots, x_n]])$ is not an affine scheme: as a locally
ringed space, it consists of a single point whose stalk is
$k[[x_0, \dots, x_n]]$. For any ring $A$ where $|\Spec(A)|$ is a point,
the stalk at that point must be $A$. If $\stb(x) = \Spec(A)$,
then we must have $A = k[[x_0, \dots, x_n]]$, but the latter is a DVR
and hence has two prime ideals: namely $(0)$ and $(x_0, \dots, x_n)$. This is a
contradiction. Thus, $\stb(x)$ cannot be an algebraic space
and $X_{Hod}$ cannot have diagonal representable by algebraic spaces.
\end{proof}

\begin{prop}\label{prop:Dol-st-diag-non-rep}
In the context of \cref{exm:Dol-st}, taking $S = \Spec(k)$ for some
algebraically closed field $k$ and $X \to \Spec(k)$ to be smooth,
the stack $X_{Dol}$ does not have a diagonal representable by algebraic spaces,
and is, hence, not a formal algebraic stack.
\end{prop}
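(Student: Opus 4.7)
The plan is to adapt the argument of \cref{prop:Hodge-st-diag-non-rep} to the Dolbeault setting, which is in fact cleaner here since the ambient space $T(X \times_k X)$ is a vector bundle rather than a blow-up. I would assume $\dim_k X \geq 1$ (otherwise $X_{Dol} = X$ is already algebraic) and exhibit a $k$-point whose stabilizer is a formal scheme that cannot be an algebraic space.

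First I would pick any closed point $x \in X(k)$ (such a point exists since $k$ is algebraically closed) and view it as a $k$-point of $X_{Dol}$ via the quotient map $X \to X_{Dol}$. Its stabilizer sits in a Cartesian square
\[\begin{tikzcd}
\stb(x) \ar[r] \ar[d] \ar[rd, phantom, "\lrcorner" very near start] &
\Spec(k) \ar[d, "{(x, x)}"] \\
\sF_{Dol} \ar[r, "{(s, t)}" below] & X \times_k X.
\end{tikzcd}\]

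Next I would identify this stabilizer concretely. By construction, $\sF_{Dol}$ is the formal completion of the tangent bundle $T(X \times_k X)$ along the image of $\Delta' = 0_X \circ \Delta_{X/k}$, and the fibre of $T(X \times_k X) \to X \times_k X$ over $(x, x)$ is the tangent space $T_{(x, x)}(X \times_k X) \cong \bA^{2n}_k$, with $n = \dim_k X$, by smoothness of $X$. The preimage of $\Delta'(X)$ in this fibre is a single point, namely the origin (the zero tangent vector). Using the compatibility of formal completion with base change \cite[\href{https://stacks.math.columbia.edu/tag/0APV}{Lemma 0APV}]{stacks-project}, I would conclude that
\[
\stb(x) \cong \Spf(k[[y_1, \dots, y_{2n}]]).
\]

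Finally I would close the argument exactly as in the Hodge case: if $\stb(x)$ were representable by an algebraic space, then \cite[Corollary 3.1.2]{NagataComp} and \cite[\href{https://stacks.math.columbia.edu/tag/06AD}{Lemma 06AD}]{stacks-project} would force it to be an affine scheme $\Spec A$ with $A = k[[y_1, \dots, y_{2n}]]$ (since the underlying set is a single point), contradicting the fact that $\Spec(k[[y_1, \dots, y_{2n}]])$ has more than one prime whenever $2n \geq 1$. The main obstacle — modest here — is the application of tag 0APV: one must check that its hypotheses apply to the base change of the formal completion of $T(X \times_k X)$ along $(x, x) : \Spec(k) \to X \times_k X$. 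Since the tangent bundle is smooth over $X \times_k X$ and $(x, x)$ is a closed point, this should go through essentially verbatim, and once the identification with $\Spf(k[[y_1, \dots, y_{2n}]])$ is established, the rest is immediate.
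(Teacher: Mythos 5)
Your proof is correct, but it takes a genuinely different route from the paper. The paper does not recompute the stabilizer in the Dolbeault setting at all: it observes that $X_{Dol}$ sits inside $X_{Hod}$ as the fibre over $0 \in \bA^1_k$ via a closed immersion $\iota$, argues that $\iota$ being a monomorphism forces $\stb(x)$ and $\stb(\iota \circ x)$ to satisfy the same universal property, and then simply imports the computation $\stb(\iota\circ x) \cong \Spf(k[[x_0,\dots,x_n]])$ already carried out in \cref{prop:Hodge-st-diag-non-rep}. You instead compute the stabilizer directly from the definition of $\sF_{Dol}$ in \cref{exm:Dol-st}: the fibre of $T(X\times_k X)$ over $(x,x)$ is $\bA^{2n}_k$, the preimage of $\Delta'(X)$ in that fibre is the origin, and the base-change compatibility of formal completion identifies $\stb(x)$ with $\Spf(k[[y_1,\dots,y_{2n}]])$ --- exactly the same tools the paper deploys for the Hodge stack, now applied to the (simpler) vector-bundle geometry rather than to a blow-up. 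Your version is self-contained, makes the Dolbeault case logically independent of the Hodge one, and correctly isolates the degenerate case $\dim_k X = 0$ (where $X_{Dol} = X$ and the statement as written actually fails), which the paper passes over silently; the paper's version is shorter given that the Hodge computation is already in hand, and avoids repeating the completion/base-change argument. The concluding non-representability step is identical in both, so any reservations about the citation of Nagata and the single-point argument apply equally to each.
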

\begin{proof}
Consider a point $x \in X_{Dol}(\Spec(k))$. This gives a point
$\iota \circ x : \Spec(k) \to X_{Dol} \hto X_{Hod}$, where the second map
is the closed immersion including $X_{Dol}$ as the fibre over $0 \in \bA^1_k$
in $X_{Hod}$.
We then observe that for any pair of morphisms $f, g : Y \to \Spec(k)$,
$\iota \circ x \circ f = \iota \circ x \circ g$ implies $x \circ f = x \circ g$,
since $\iota$ is a monomorphism
\cite[\href{https://stacks.math.columbia.edu/tag/0504}{Lemma 0504}]
{stacks-project}. Thus, we must have a unique map
$h : Y \to \stb(x)$ such that the two composites
$Y \to[h] \stb(x) \to \Spec(k)$ are equal. That $\iota$ is a monomorphism
also gurantees $h$ is the unique map satisfying this. This shows that
$\stb(x)$ satisfies the same universal property as $\stb(\iota \circ x)$.
Hence, by the computation of the stabilizers of $X_{Hod}$ in
\cref{prop:Hodge-st-diag-non-rep},
we have that $\stb(x) \cong \Spf(k[[x_0, \dots, x_n]])$. Thus,
$X_{Dol}$ cannot have a diagonal representable by algebraic spaces.
\end{proof}

\begin{rmk}\label{rmk:dR-st-set-fibres}
The stack $X_{dR}$ of \cref{exm:dR-st} is a stack whose fibre categories are
setoids (a groupoid that is also a preorder, and hence a groupoid with
contractible connected components). That is, it is equivalent to the
Grothendieck construction of a sheaf of sets on $\Aff_{/S}$.
This follows from the fact that the $\sF_{dR} \to X \times_S X$ is
a monomorphism of sheaves of sets. Thus, $X_{dR}$ has trivial stabilizers but
it is still not algebraic, in general.
\end{rmk}

\begin{rmk}
The lack of algebraicity of $X_{dR}, X_{Dol}, X_{Hod}$ prevent us from
concluding that $\sV(X_{dR}), \sV(X_{Dol}), \sV(X_{Hod})$ are algebraic.
Together, these prevents us from using \cite[Theorem 1.2]{HR19} (or, even more
general results such as \cite[Theorem 5.1.1]{HP23}) to conclude that
the mapping stacks involved in the definition of
$\sM_1(X_{dR}), \sM_1(X_{Dol}), \sM_1(X_{Hod})$ are algebraic.
Nevertheless, we will see that there is an alternate
route to proving their algebraicity.
\end{rmk}

\subsection{Moduli Stacks of Vector Bundles and Arrow Bundles}

We now proceed to show the main results on this paper. For this, we first
recall that ($\lambda$--)connections, connections and Higgs bundles can
all be formulated as modules over certain sheaves of algebras of differential
operators \cite[\S 2]{ModRepFunGrpI}. This formulation allows us to connect
vector bundles on formal groupoids with connections via
Simpson's crystallization functors which we now discuss.

\begin{prop}[{\cite[Theorem 5.1]{GeomNonAbHodgeFilt}}]\label{prop:diff-op-ring}
Let $(X, \sF)$ be a smooth formal category. Then, there exists
a filtered $\mcO_X$--algebra $\Lambda_\sF$ such that there is an equivalence
of categories:
\[
\Diamond_{X, \sF} : \LMod_{flf}(\Lambda_\sF) \to[\simeq] \Vect(X_\sF)
\]
where the left side is the category of left $\Lambda_\sF$--modules that is
finite locally free with respect to the induced $\mcO_X$--module structure.
\end{prop}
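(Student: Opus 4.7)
The plan is to construct $\Lambda_\sF$ as a sheaf of ``differential operators'' dual to the pro-structure sheaf of $\sF$, and then obtain the equivalence in two stages: first identifying $\Vect(X_\sF)$ with $\sF$--equivariant vector bundles on $X$ (descent along the quotient map $X \to X_\sF$), and then identifying equivariant structures with $\Lambda_\sF$--actions by pro-ind duality.

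For the construction, let $\mcI \subset \mcO_\sF$ denote the ideal sheaf of the closed immersion $i : X \to \sF$. Since $\sF$ is the completion along $i$, we have $\mcO_\sF \cong \varprojlim_n \mcO_\sF / \mcI^n$. The formal smoothness of $s, t : \sF \to X$ combined with smoothness of $X$ guarantees that each truncation $\mcO_\sF / \mcI^n$ is finite locally free as an $\mcO_X$--module via $t$. I would then set
\[
\Lambda_\sF^{(n)} := \HHom_{\mcO_X}\bigl(\mcO_\sF/\mcI^{n+1},\ \mcO_X\bigr), \qquad \Lambda_\sF := \bigcup_n \Lambda_\sF^{(n)}
\]
equipped with the increasing filtration by the $\Lambda_\sF^{(n)}$. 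Dualizing the composition $c : \sF \times_X \sF \to \sF$ (more precisely, its truncations, which exist because composition is compatible with the ideal filtration by associativity of the groupoid) yields a multiplication $\Lambda_\sF \otimes_{\mcO_X} \Lambda_\sF \to \Lambda_\sF$, and dualizing $i$ yields the unit. The groupoid axioms in \cref{defn:form-cat} translate directly into associativity and unitality, so $\Lambda_\sF$ is a filtered $\mcO_X$--algebra.

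Next, by descent along the presentation $\sF \rightrightarrows X \to X_\sF$, the category $\Vect(X_\sF)$ is equivalent to the category of pairs $(E, \phi)$, where $E \in \Vect(X)$ and $\phi : s^*E \to[\cong] t^*E$ is a descent isomorphism satisfying the standard cocycle identity on $\sF \times_X \sF$ and the normalization $i^*\phi = \id_E$. Because $E$ is finite locally free and each $\mcO_\sF/\mcI^n$ is finite locally free over $\mcO_X$, the isomorphism $\phi$ is equivalent to its compatible system of truncated descent data $E \to E \otimes_{\mcO_X} \mcO_\sF / \mcI^n$, i.e.\ to a continuous coaction of the topological coalgebroid $\mcO_\sF$. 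Finite local freeness of $E$ lets me dualize term-by-term: a continuous coaction on $E$ is the same as an action of $\Lambda_\sF = \mathrm{colim}_n (\mcO_\sF/\mcI^n)^\vee$ extending the $\mcO_X$--module structure, with the cocycle and normalization conditions for $\phi$ translating respectively to associativity and unitality of the action. This defines the functor $\Diamond_{X, \sF}$ and its inverse.

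The main obstacle is the pro-ind duality step: one must check that \emph{every} $\Lambda_\sF$--action on an $E \in \Vect(X)$ arises from a well-defined coaction of the full pro-sheaf $\mcO_\sF$, not merely a formal one. This amounts to showing that a left $\Lambda_\sF$--module structure on a finite locally free $\mcO_X$--module is automatically ``locally finite'' with respect to the filtration $\Lambda_\sF^{(n)}$ in the sense required to dualize back to a compatible system of truncated coactions, and that the associativity of the action forces the cocycle identity. Once this compatibility is established, full faithfulness and essential surjectivity of $\Diamond_{X, \sF}$ follow formally. Finally, verifying naturality of the filtration on $\Lambda_\sF$ under restriction to the truncated neighborhoods $\sF^{(n)} := \Spec_X(\mcO_\sF/\mcI^{n+1})$ shows that the filtration on $\Lambda_\sF$ is intrinsic and independent of the choice of $s$ versus $t$ used to view $\mcO_\sF/\mcI^n$ as an $\mcO_X$--module, up to a canonical identification via the groupoid inverse.
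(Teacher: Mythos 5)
Your proposal is correct in outline but takes a genuinely different route from the paper. The paper does not construct $\Lambda_\sF$ or prove the equivalence at all: it cites Simpson's Theorem 5.1, which gives an $(\infty,1)$--equivalence $\Perf_{\mcO_X}(\Lambda_\sF) \simeq \Perf(X_\sF)$, and then argues that because the crystallization functor is defined affine-locally on modules and extended to complexes term-wise (then globalized by a limit over a Zariski cover), it preserves complexes concentrated in degree zero and hence restricts to the stated $1$--categorical equivalence on finite locally free objects. You instead rebuild Simpson's construction from scratch: $\Lambda_\sF = \bigcup_n (\mcO_\sF/\mcI^{n+1})^\vee$ with multiplication dual to the groupoid composition, descent along $\sF \rightrightarrows X \to X_\sF$ to identify $\Vect(X_\sF)$ with stratified bundles, and finite-rank duality to convert coactions of the pro-coalgebroid $\mcO_\sF$ into $\Lambda_\sF$--actions. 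This is the classical Berthelot--Ogus-style argument, and what it buys you is self-containedness plus avoidance of the paper's own delicate point (deducing an equivalence of hearts from an equivalence of perfect complexes, where strictly one should also check the inverse functor preserves degree-zero objects). The step you flag as the ``main obstacle'' is indeed where the real content lies, but it is not problematic: since $E$ and each $\mcO_\sF/\mcI^{n+1}$ are finite locally free, an action $\Lambda_\sF \otimes_{\mcO_X} E \to E$ is exactly a compatible system of maps $\Lambda_\sF^{(n)} \otimes E \to E$, each of which dualizes to $E \to E \otimes_{\mcO_X} \mcO_\sF/\mcI^{n+1}$ by reflexivity, so local finiteness is automatic and only the translation of associativity/unitality into the cocycle/normalization identities (a diagram chase) remains; invertibility of the resulting map $s^*E \to t^*E$ follows from the groupoid inverse or, alternatively, by induction on the $\mcI$--adic filtration. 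Two smaller points to tighten: you should justify that formal smoothness of $s,t$ together with smoothness of $X$ makes each truncation $\mcO_\sF/\mcI^{n+1}$ finite locally free (this is where the ``smooth formal category'' hypothesis enters, and it is what makes $\Lambda_\sF$ split almost polynomial), and your multiplication map should be written with the correct left/right $\mcO_X$--module structures, since $\Lambda_\sF$ is an $\mcO_X$--bimodule that is not central in positive filtration degrees.
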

\begin{proof}
Strictly speaking, Simpson's result gives an $(\infty, 1)$--equivalence:
\[
\Perf_{\mcO_X}(\Lambda_\sF) \to[\simeq] \Perf(X_\sF)
\]
where the left side is the $(\infty, 1)$--category of complexes of
$\Lambda_\sF$--modules quasi-isomorphic as complexes of $\mcO_X$--modules
to complexes whose terms are finite locally free $\mcO_X$--modules.
To get the equivalence stated above, we first observe that the functor
realizing this equivalence is first defined for the case of affine $X$
and $\Lambda_X$--modules on one side, and $\mcO_{X_\sF}$--modules on the other,
and then observing that applying the functor term-wise gives an extension
to complexes \cite[\S 3.5, p. 24]{GeomNonAbHodgeFilt}. The global case
is obtained by taking the limit of the crystallization functors over a Zariski
cover of $X$ \cite[Proof of Theorem 5.1]{GeomNonAbHodgeFilt}.
This implies that the functor preserves complexes concentrated in degree zero.
\end{proof}

\begin{defn}\label{defn:diff-op-ring}
We call $\Lambda_\sF$ in \cref{prop:diff-op-ring} the sheaf of rings
of differential operators associated to $(X, \sF)$. We call $\Diamond_{X, \sF}$
the crystallization functor.
We will suppress the subscript of the crystallization functor, when there is
no confusion.
\end{defn}

\begin{rmk}
$\Lambda_\sF$ is a sheaf of rings of differential operators as defined in
\cite[\S 2]{ModRepFunGrpI}.
\end{rmk}

\begin{prop}\label{prop:cryst-pullback-compat}
In the context of \cref{defn:diff-op-ring}, consider two formal groupoids
$(X, \sF), (Y, \sG)$ with an internal functor
$(f_0, f_1) : (X, \sF) \to (Y, \sG)$ giving a morphism of quotient stacks
$f : X_\sF \to Y_\sG$.
Then, the following diagram of categories commutes up to natural isomorphism:
\[\begin{tikzcd}
\LMod_{flf}(\Lambda_\sG) \ar[r, "\Diamond_{Y, \sG}"] \ar[d, "f_0^*" left] &
\Vect(Y_\sG) \ar[d, "f^*"] \\
\LMod_{flf}(\Lambda_\sF) \ar[r, "\Diamond_{X, \sF}" below] &
\Vect(X_\sF)
\end{tikzcd}\]
\end{prop}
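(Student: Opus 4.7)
The plan is to reduce to the local (affine) situation, where the crystallization functor admits an explicit description, and then verify compatibility with pullback along the internal functor $(f_0, f_1)$.

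First, I would make precise the action of $f_0^*$ on the left column of the diagram. This functor is not merely $\mcO_X$-module pullback along $f_0 : X \to Y$: to land in $\LMod_{flf}(\Lambda_\sF)$, it must factor through a canonical morphism of $\mcO_X$-algebras $f_0^*\Lambda_\sG \to \Lambda_\sF$ produced from the internal functor structure. The first task is therefore to construct this map via the functoriality of the assignment $(X, \sF) \mapsto (X, \Lambda_\sF)$ with respect to internal functors of formal groupoids; concretely, $(f_0, f_1)$ gives a compatible map of cotangent/jet objects, which upon taking the relevant enveloping construction yields the desired filtered algebra map. The desired $f_0^*$ is then base change along this morphism.

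Second, I would observe that both $\LMod_{flf}(\Lambda_{(-)})$ and $\Vect((-)_\sF)$ satisfy Zariski descent in $X$ (and separately in $Y$), and that $\Diamond_{X, \sF}$, $\Diamond_{Y, \sG}$, $f^*$ and $f_0^*$ all respect Zariski restrictions, by the very way the crystallization functor is globalized in \cref{prop:diff-op-ring}. Hence it suffices to produce the natural isomorphism on a Zariski cover, which reduces the problem to the case where $X$ and $Y$ are affine.

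Third, in the affine case, Simpson's construction realizes $\Diamond_{Y, \sG}(N)$ as the sheaf on $Y_\sG$ whose underlying $\mcO_Y$-module is $N$ and whose $\sG$-descent datum $s^*N \xrightarrow{\sim} t^*N$ is determined by the $\Lambda_\sG$-action \cite[\S 3.5]{GeomNonAbHodgeFilt}. Pulling back along $f : X_\sF \to Y_\sG$ amounts to pulling back the underlying sheaf along $f_0$ and transporting the descent datum along $f_1$; by construction of the map $f_0^*\Lambda_\sG \to \Lambda_\sF$, this is precisely $\Diamond_{X, \sF}$ applied to $f_0^*N$ with its induced $\Lambda_\sF$-structure. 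This gives a canonical comparison morphism $\eta_N : f^*\Diamond_{Y, \sG}(N) \to \Diamond_{X, \sF}(f_0^*N)$, natural in $N$; checking it is an isomorphism in the affine case amounts to checking it on the underlying $\mcO_X$-modules, where it is tautologically the identity $f_0^*N = f_0^*N$. The descent argument from the previous paragraph then glues $\eta$ to a global natural isomorphism.

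The main obstacle is the first step: extracting from the internal functor $(f_0, f_1)$ a canonical, well-defined map $f_0^*\Lambda_\sG \to \Lambda_\sF$ of filtered $\mcO_X$-algebras, and showing that this is what implicitly governs $f_0^*$ on modules. Once this functoriality of $\sF \mapsto \Lambda_\sF$ is in hand, the verification that $\Diamond$ intertwines it with $f^*$ is an essentially formal consequence of how $\Diamond$ is built from the groupoid descent data.
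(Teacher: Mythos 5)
Your global strategy coincides with the paper's: the global crystallization functor is \emph{defined} as a limit over a Zariski cover of $X$ of affine-local crystallization functors, so one reduces to the affine case, verifies the compatibility there, and concludes using that $f_0^*$ commutes with that limit. Your affine-case verification via the stratification/descent-datum description of $\Diamond$ is an acceptable substitute for what the paper does, namely invoking the equivalence between $\Gamma(\Lambda_{\sF_i})$--modules and connections in the sense of \cite[\S II.1.2]{CohomCrysCharP} and citing the base-change formula \cite[\S II.1.2.5]{CohomCrysCharP} for the latter.

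The genuine problem is your first step, which you yourself single out as the main obstacle: there is in general no morphism of filtered $\mcO_X$--algebras $f_0^*\Lambda_\sG \to \Lambda_\sF$, and $f_0^*$ on left modules is not a base change of rings. Indeed, $f_0^*\Lambda_\sG$ is not even naturally a sheaf of rings (it is the transfer bimodule, as for $\mcD$--modules), and the map that the internal functor actually produces goes the other way: $\Lambda_\sF$ is the $\mcO_X$--dual of the coordinate sheaf of the formal scheme $\sF$, and dualizing the map $f_0^*\mcO_\sG \to \mcO_\sF$ induced by $f_1$ yields $\Lambda_\sF \to f_0^*\Lambda_\sG$ (in the de Rham case its degree-one piece is the differential $\Theta_X \to f_0^*\Theta_Y$, which is invertible only when $f_0$ is \'etale). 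The left $\Lambda_\sF$--structure on $f_0^*N$ must therefore be defined directly as the stratification pullback --- exactly the ``transport the descent datum along $f_1$'' operation of your third step, or equivalently Berthelot's base change of modules with connection as in the paper --- and not justified ``by construction of the map $f_0^*\Lambda_\sG \to \Lambda_\sF$'' as you write. Once $f_0^*$ is set up this way, your comparison morphism $\eta_N$ and the gluing go through. (In the paper's actual applications $f_0$ has the form $g \times \id_X$ with $\sG = U' \times \sF$, so $f_0^*\Lambda_\sG = \Lambda_{\sF,U}$ on the nose and the issue is invisible; but the proposition is stated for arbitrary internal functors, where it is not.)
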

\begin{proof}
We pick a Zariski cover $\set{X_i \to X}_{i \in I}$, and then take
the formal groupoids $(X_i, \sF_i)$ where
$\sF_i = X_i \times X_i \times_{X \times X} \sF$. We define
the formal groupoids $(Y_i, \sG_i)$ analogously.
Without loss of generality, we can take the $X_i$ to be $X \times_Y Y_i$.
We get corresponding morphisms of stacks
$X_{i, \sF_i} \to X_\sF, X_{i, \sF_i} \to[f_i] Y_{i, \sF_i},
Y_{i, \sG_i} \to Y_{\sG}$.
Then, for an object $(E, \phi) \in \LMod_{flf}(\Lambda_\sG)$, we have:
\begin{align*}
& \Diamond_{X, \sF}(f_0^*(E, \phi)) \\
=& \lim_{X_i \to X} \Diamond_{X_i, \sF_i}((f_0^*(E, \phi))|_{X_i}) \\
=& \lim_{X_i \to X} \Diamond_{X_i, \sF_i}(f_0^*((E, \phi)|_{Y_i}))
\end{align*}
Let $A_i, B_i$ be the rings of global sections of $X_i, Y_i$ so that
$X_i = \Spec(A_i), Y_i = \Spec(B_i)$.
Let $N_i$ denote the $B_i$--module $\Gamma(E|_{Y_i})$, and
$M_i = A_i \otimes_{B_i} N_i$, the $A_i$--module $\Gamma(f^*_0(E|_{Y_i}))$.
If we define
$F_i := \Gamma(\Lambda_{\sF_i})^\vee$
There is an equivalence
\cite[\S 3.5]{GeomNonAbHodgeFilt} between
$\Gamma(\Lambda_{\sF_i})$--modules and connections as defined
in \cite[\S II.1.2]{CohomCrysCharP} and
by the definition of base change for connections given in
\cite[94, \S II.1.2.5]{CohomCrysCharP}, we can deduce:
\[
\Diamond_{X_i, \sF_i}(f^*_0((E, \phi)|_{Y_i}))
\cong f^*_0\br{\Diamond_{X_i, \sF_i}\br{\br{E, \phi}|_{Y_i}}}
\]
Finally, since limits of functors are defined objectwise and $f^*_0$
is given by precomposition, $f^*_0$ commutes with limits. Thus, we have:
\[
\Diamond_{X, \sF}(f_0^*(E, \phi))
\cong f_0^*\br{\lim_{X_i \to X} \Diamond_{X_i, \sF_i}((E, \phi)|_{Y_i})}
\]
A similar argument applies for morphisms.
\end{proof}

These results allow us to give a concrete description of the moduli prestack
of arrow bundles on a smooth formal groupoid. At the same time, we get
a simple description of the fibres of the boundary map as in
\cref{prop:bndry-map-fibre-simple}.

\begin{thm}\label{thm:mod-arr-bun-formal-grpd-concrete}
Given a smooth formal groupoid $(X, \sF)$, where the morphism $X \to S$
is quasi-separated, the underlying category of $\sM_1(X_\sF)$ is equivalent
to a category defined by the following data:
\begin{itemize}
\item Objects are tuples $(u, E, \phi, F, \psi, s)$ where:
  \begin{itemize}
  \item $u : U \to S$ is an object of $\Aff_{/S}$
  \item $(E, \phi), (F, \psi)$ are
  $\Lambda_{\sF, U} := pr_1^*\Lambda_{_\sF}$--modules, where
  $E, F$ are finite locally free with respect to the induced
  $\mcO_{U \times \mcX}$--module structure
  \item $s : E \to F$ is a morphism of $\Lambda_{U \times \mcX}$--modules
  \end{itemize}
\item Morphims $(u, E, \phi, F, \psi, s) \to (u', E', \phi', F', \psi', s')$
  are tuples $(f, a, b)$ where:
  \begin{itemize}
  \item $f : U = \dom(u) \to U' = \dom(u')$ is a morphism in $\Aff_{/S}$
  \item $a : f^<E' \to E, b^<F' \to F$ are morphisms of
    $\Lambda_{\sF, U}$--modules such that the following diagram commutes:
    \[\begin{tikzcd}
    f^<E' \ar[r, "a"] \ar[d, "f^<s'" left] & E \ar[d, "s"] \\
    f^<F' \ar[r, "b" below] & F'
    \end{tikzcd}\]
    noting that $f^<\Lambda_{\sF, U'} = f^<(u')^<\Lambda_{X_\sF} =
    pr_1^*\Lambda_{X_\sF} = \Lambda_{\sF, U}$.
  \end{itemize}
\end{itemize}
\end{thm}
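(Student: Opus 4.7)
The plan is to apply the crystallization functor of \cref{prop:diff-op-ring} fibrewise over $\Aff_{/S}$. Fix $u : U \to S$ in $\Aff_{/S}$ and set $X_U := U \times_S X$ and $\sF_U := U \times_S \sF$, equipping the latter with the source, target, composition and identity maps obtained by base change from $(X, \sF)$. First I would verify that $(X_U, \sF_U)$ is again a smooth formal groupoid over $U$, and that its associated quotient stack is equivalent to $U \times_S X_\sF$. This uses that the $2$--coequalizer presenting $X_\sF$ is universal along the base change $U \to S$ in $\St_{/S}$, together with the fact that $U \times_S \sF$ is still a formal scheme --- this in turn follows from the compatibility of formal completions with fibre products \cite[\href{https://stacks.math.columbia.edu/tag/0APV}{Lemma 0APV}]{stacks-project}.

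Next I would identify the sheaf of differential operators for $(X_U, \sF_U)$ with $\Lambda_{\sF, U} = pr_1^*\Lambda_\sF$, where $pr_1 : U \times_S X \to X$. Since $\Lambda_\sF$ is, locally on $X$, built from the infinitesimal neighbourhoods of the identity section $i : X \to \sF$, and these infinitesimal neighbourhoods base change correctly along $U \to S$, the formation of $\Lambda_\sF$ commutes with pulling back along $pr_1$. Granting this, \cref{prop:diff-op-ring} supplies an equivalence of categories
\[
\Diamond_U : \LMod_{flf}(\Lambda_{\sF, U}) \to[\simeq] \Vect(U \times_S X_\sF).
\]
Applying $\Diamond_U^{-1}$ term-by-term to the data $(E, F, s)$ appearing in \cref{cns:mod-prest-arr-bun} yields the tuple $(u, E_0, \phi, F_0, \psi, s_0)$ featured in the statement, and conversely $\Diamond_U$ sends such a tuple back to an object of $\sM_1(X_\sF)$ over $U$.

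For morphisms $(f, a, b) : (u, E, F, s) \to (u', E', F', s')$ in $\sM_1(X_\sF)$, the base-change functor $(f \times_S \id_{X_\sF})^*$ corresponds under the identifications above to $(f \times_S \id_X)^*$ between sheaves of $\Lambda_{\sF, U'}$-- and $\Lambda_{\sF, U}$--modules; this is exactly the content of \cref{prop:cryst-pullback-compat} applied to the internal functor $(X_U, \sF_U) \to (X_{U'}, \sF_{U'})$ induced by $f$. Using the resulting natural isomorphism, the commutative square in $\Vect(U \times_S X_\sF)$ defining $(f, a, b)$ translates term-by-term into the commutative square in $\LMod_{flf}(\Lambda_{\sF, U})$ defining a morphism in the target category. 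Assembling the $\Diamond_U$ into a morphism of prestacks via a standard Grothendieck-construction argument, and invoking that each $\Diamond_U$ is an equivalence of fibre categories, gives the desired equivalence. The main obstacle is purely the base change step: one must carefully check that the formal groupoid structure, the associated quotient stack and the sheaf of differential operators all commute with pullback along $U \to S$, since only then does the global crystallization equivalence of \cite{GeomNonAbHodgeFilt} descend to a fibrewise equivalence over $\Aff_{/S}$. Once those compatibilities are established, the rest is routine book-keeping.
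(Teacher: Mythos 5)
Your proposal is correct and takes essentially the same route as the paper: apply the crystallization equivalence of \cref{prop:diff-op-ring} to the base-changed formal groupoid $(U \times X, U \times \sF)$ fibrewise over $\Aff_{/S}$, and handle morphisms via the pullback compatibility of \cref{prop:cryst-pullback-compat}. The only difference is that you explicitly flag the base-change compatibilities (that $(X_U, \sF_U)$ is again a smooth formal groupoid with quotient $U \times_S X_\sF$ and that $\Lambda_{\sF_U} \cong pr_1^*\Lambda_\sF$) as steps to verify, whereas the paper takes these for granted and proceeds directly to the fibrewise translation of objects and morphisms.
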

\begin{proof}
Let $(u, E_0, F_0, s_0)$ be an object of $\sM_1(X_\sF)$ as defined in
\cref{cns:mod-prest-arr-bun}. Then, the inverse of the crystallization functor
for $(U \times X, U \times \sF)$ applied to $s_0$ gives a morphism of
$\Lambda_{U \times X_\sF}$--modules
\[
s = \Diamond_{U \times X, U \times \sF}^{-1}(s_0) :
(E, \phi) = \Diamond_{U \times X, U \times \sF}^{-1}(E_0)
\to (F, \psi) = \Diamond_{U \times X, U \times \sF}^{-1}(F_0)
\]
where $\phi, \psi$ are the structure maps making the
$\mcO_{U \times \mcX}$--modules $E, F$ into
$\Lambda_{U \times X_\sF}$--modules. Thus, $(u, E, \phi, F, \psi, s)$
is a tuple giving an object as described in the statement of the theorem.

Now, let $(f, a_0, b_0) : (u, E_0, F_0, s_0) \to (u', E_0', F_0', s_0')$
be a morphism in $\sM_1(X_\sF)$. Then, $a_0 : f^<E_0' \to E_0,
b_0 : f^<F_0' \to F_0$ are morphisms of $\mcO_{X_\sF}$--modules. Then,
we get isomorphisms of $\Lambda_{\sF}$--modules:
\begin{align*}
a' := \Diamond^{-1}(a_0) :
E'' = \Diamond^{-1}(f^<E_0') \to E = \Diamond^{-1}(E_0) \\
b' := \Diamond^{-1}(b_0) :
F'' = \Diamond^{-1}(f^<F_0')\to F = \Diamond^{-1}(F_0)
\end{align*}
We also obtain isomorphisms
$\psi_E : f^<\Diamond^{-1}(E_0') \to[\cong] \Diamond^{-1}(f^<E_0')$,
$\psi_F : f^<\Diamond^{-1}(F_0') \to[\cong] \Diamond^{-1}(f^<F_0')$ by
\cref{prop:cryst-pullback-compat}, giving us isomorphisms
$a := a_0 \circ \psi_E, b := b_0 \circ \psi_F$. We denote
$E' := \Diamond^{-1}(E_0'), F' := \Diamond^{-1}(F_0'), s' := \Diamond^{-1}(s_0')$.
By naturality of the $\psi_E, \psi_F$, we have the following commuting diagram:
\[\begin{tikzcd}
f^<E' \ar[r, "\psi_E"] \ar[d, "f^<s'" left] \ar[rr, bend left, "a" above] &
\Diamond^{-1}(f^<E_0') \ar[r, "a'"] \ar[d,"\Diamond^{-1}(f^<s'_0)" left] &
E \ar[d, "s"] \\
f^<F' \ar[r, "\psi_F" below] \ar[rr, bend right, "b" below] &
\Diamond^{-1}(f^<F_0') \ar[r, "b'"] &
F
\end{tikzcd}\]
This shows that $(f, a, b)$ is a morphism of the form described in the statement.

If we now set $\Psi(u, E_0, F_0, s)$ and $\Psi(f, a_0, b_0) := (f, a, b)$,
it is not hard to check that $\Psi$ is a functor and it commutes with the
projections to $\Aff_{/S}$ by construction. It is also not hard to check that
it is an equivalence by using the fact that on fibre categories, it is simply
the crystallization functor which is an equivalence by \cref{prop:diff-op-ring}.
\end{proof}

\begin{prop}\label{prop:frml-grpd-bndry-map-fibre}
In the context of \cref{defn:arr-bun-fixed-src-trgt}, setting $\mcY = X_\sF$ for
a smooth formal groupoid $(X, \sF)$,
$\Diamond^{-1}(E) = E_\diamond, \Diamond^{-1}(F) = F_\diamond$ and
$\Lambda_{\sF, U} := pr_1^*\Lambda_{\sF}$, we have that $P^{X_\sF}_{v, w}$
is equivalent to the category with the following description:
\begin{itemize}
\item Objects are tuples $(r, s)$ where:
\begin{itemize}
\item $r : V \to U$ is a morphism in $\Aff_{/S}$
\item $s_\diamond : r^<E_\diamond \to r^<F_\diamond$ is a morphism of
$r^<\Lambda_{\sF, U}$--modules
\end{itemize}
\item Morphisms $(r, s_\diamond) \to (r', s_\diamond')$ are morphisms
$f : V = \dom(r) \to V' = \dom(r')$ in $\Aff_{/S/U}$ such that
$f^<s' = s$.
\end{itemize}
\end{prop}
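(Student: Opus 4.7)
The plan is to chain together two equivalences already in hand: the equivalence $Q^{X_\sF}_{v,w} \simeq P^{X_\sF}_{v,w}$ of \cref{prop:bndry-map-fibre-simple}, and the crystallization equivalence $\Diamond_{V \times X, V \times \sF}$ of \cref{prop:diff-op-ring} applied over each affine base $V \times X$. After applying \cref{prop:bndry-map-fibre-simple}, it suffices to produce an equivalence between $Q^{X_\sF}_{v,w}$ --- whose objects are pairs $(r,s)$ with $r : V \to U$ in $\Aff_{/S/U}$ and $s : r^<E \to r^<F$ a morphism of finite locally free $\mcO_{V \times X_\sF}$--modules, and whose morphisms $(r,s) \to (r',s')$ are arrows $f : V \to V'$ over $U$ satisfying $f^<s' = s$ --- and the $\Lambda$--module category described in the statement.

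The functor $\Phi : Q^{X_\sF}_{v,w} \to (\text{target category})$ I would construct sends $(r,s)$ to $(r, \Diamond^{-1}(s))$, where the source and target of $\Diamond^{-1}(s)$ are identified with $r^<E_\diamond, r^<F_\diamond$ via the natural isomorphisms $\Diamond^{-1}(r^<E) \cong r^<\Diamond^{-1}(E) = r^<E_\diamond$ and likewise for $F$ supplied by \cref{prop:cryst-pullback-compat}. On a morphism $f : (r,s) \to (r',s')$, I send $f \mapsto f$ itself, and verify the required identity $f^<(\Diamond^{-1}(s')) = \Diamond^{-1}(s)$ by applying the (fully faithful) functor $\Diamond^{-1}$ to the equation $f^<s' = s$ and transporting across the coherence isomorphism $\Diamond^{-1} \circ f^< \cong f^< \circ \Diamond^{-1}$ at both source and target.

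Fully faithfulness of $\Phi$ is then immediate: in both $Q^{X_\sF}_{v,w}$ and the target category, the Hom set from $(r,s)$ to $(r',s')$ is the subset of $\Hom_{\Aff_{/S/U}}(V, V')$ cut out by a single compatibility equation, and these two equations correspond to one another under the equivalence $\Diamond^{-1}$ via \cref{prop:cryst-pullback-compat}, so $\Phi$ restricts to a bijection on these Hom sets. For essential surjectivity, given an object $(r, s_\diamond)$ of the target with $s_\diamond : r^<E_\diamond \to r^<F_\diamond$, I apply $\Diamond$ and use the inverse of the coherence isomorphism to produce $s : r^<E \to r^<F$ in $Q^{X_\sF}_{v,w}$; by construction, $\Phi(r,s)$ is isomorphic to $(r, s_\diamond)$.

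The one point that requires care --- and which I expect to be the main (albeit purely bookkeeping) obstacle --- is showing that the various natural isomorphisms $\Diamond^{-1}(f^<-) \cong f^<\Diamond^{-1}(-)$ from \cref{prop:cryst-pullback-compat} are globally coherent: that the identifications made at the source $r^<E_\diamond$ and the target $r^<F_\diamond$ of a morphism, together with the further identifications needed when composing two such morphisms $V \to V' \to V''$ (where $(r' \circ f)^< = f^< (r')^<$ must be compared with $r^<$), all assemble consistently. Once this coherence is spelled out, the equivalence follows.
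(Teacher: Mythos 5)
Your proposal is correct and follows essentially the same route as the paper's own proof: reduce to $Q^{X_\sF}_{v,w}$ via \cref{prop:bndry-map-fibre-simple}, apply $\Diamond^{-1}$ to the morphism $s$, transport along the natural isomorphisms of \cref{prop:cryst-pullback-compat} to define $s_\diamond$, and deduce functoriality, full faithfulness, and essential surjectivity from the fact that both Hom sets are cut out of $\Hom_{\Aff_{/S/U}}(V,V')$ by corresponding conditions. The coherence bookkeeping you flag is handled in the paper by characterizing $s_\diamond$ as the unique map making the square with $\psi_E,\psi_F$ commute, which is the same resolution you sketch.
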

\begin{proof}
We take $\mcY = X_\sF$ in \cref{prop:bndry-map-fibre-simple}, and consider
an object $(r, s)$ in $Q^{X_\sF}_{v, w} \simeq P^{X_\sF}_{v, w}$ according to
the description there. Then,
$\Diamond^{-1}(s) : \Diamond^{-1}(r^<E) \to \Diamond^{-1}(r^<F)$ is a
morphism of $r^<\Lambda_{\sF, U}$--modules. Choosing a natural
isomorphism $\psi$ implementing the $2$--commutativity in
\cref{prop:cryst-pullback-compat}, we get a unique morphism of
$r^<\Lambda_{\sF, U}$--modules $s_\diamond$ making the following square commute:
\[\begin{tikzcd}
\Diamond^{-1}(r^<E) \ar[r, "\psi_{E}"] \ar[d, "\Diamond^{-1}(s)" left] &
r^<E_\diamond \ar[d, "s_\diamond"] \\
\Diamond^{-1}(r^<F) \ar[r, "\psi_{F}" below] &
r^<F_\diamond
\end{tikzcd}\]
Then, $(r, s_\diamond)$ is an object as described in the statement.

Now, consider a morphism $f : (r, s) \to (r', s')$ in $P^{X_\sF}_{v, w}$.
Then, $f^<s' = s$ implies $\Diamond^{-1}(f^<s') = \Diamond^{-1}(s)$.
By the uniqueness of $s_\diamond$ and $s_\diamond'$ as in the previous paragraph,
we must have $f^<s_\diamond' = s_\diamond$.

From this description, it is easy to see that the assignment
$(r, s) \mapsto (r, s_\diamond), f \mapsto f$ is a fully faithful functor
from the $P^{X_\sF}_{v, w}$ to the category described in the statement.
That this functor is also essentially surjective follows from the
fully faithfulness of the crystallization functors --- note that the fibre
categories are simply the Hom sets $\Hom_{\mcO_{U \times X_\sF}}(r^<E, r^<F)$
on one side and $\Hom_{r^<\Lambda_{\sF, U}}(r^<E_\diamond, r^<F_\diamond)$,
on the other.
\end{proof}

With this, we can prove the algebraicity of the moduli stack of
arrow bundles on a smooth formal groupoid whenever the moduli stacks of vector
bundles, $\Lambda_\sF$--modules and arrow bundles on the object scheme of the
formal groupoid are algebraic.

\begin{thm}\label{thm:mod-st-arr-bun-formal-grpd-alg}
Let $(X, \sF)$ be a smooth formal groupoid over $S$, where the map $X \to S$ is
smooth, and such that $\sM(X), \sM_1(X)$ and $\sM(X_\sF)$ are all algebraic.
Then, the following are true:
\begin{enumerate}
\item The boundary map $\partial_{X_\sF} : \sM_1(X_\sF) \to \sM(X_\sF)^2$ is
representable by algebraic spaces. Therefore, $\sM_1(X_\sF)$ is an algebraic
stack.
\end{enumerate}
If, in addition, $X \to S$ is projective, then
\begin{enumerate}[resume]
\item $\partial_{X_\sF}$ is affine and of finite presentation.
\end{enumerate}
If, furthermore, $\sM(X_\sF)$ is (locally) of finite presentation, then
\begin{enumerate}[resume]
\item $\sM_1(X_\sF)$ is (locally) of finite presentation.
\end{enumerate}
\end{thm}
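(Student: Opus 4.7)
The plan is to reduce algebraicity of $\sM_1(X_\sF)$ to the hypothesized algebraicity of $\sM(X)$, $\sM_1(X)$, and $\sM(X_\sF)$ by exhibiting $\sM_1(X_\sF)$ as (essentially) a closed subspace of a suitable fibre product. The identity morphism $i : X \to X_\sF$ of the formal groupoid gives a morphism of prestacks, and pullback along $i$ yields compatible maps $i^* : \sM(X_\sF) \to \sM(X)$ and $i^* : \sM_1(X_\sF) \to \sM_1(X)$. These are compatible with the boundary maps, producing a canonical comparison morphism
\[
\Phi \colon \sM_1(X_\sF) \longrightarrow \sM_1(X) \times_{\sM(X)^2} \sM(X_\sF)^2.
\]

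First I would show that $\Phi$ is an affine monomorphism, and in the projective case, of finite presentation. An object of the target over $U$ consists of $\Lambda_{\sF,U}$-modules $(E, \phi)$ and $(F, \psi)$ on $U \times X$ (via crystallization, \cref{prop:diff-op-ring}) together with an $\mcO_{U \times X}$-linear morphism $s \colon E \to F$ between the underlying sheaves; membership in $\sM_1(X_\sF)$ amounts to the additional condition that $s$ be $\Lambda_{\sF,U}$-linear, i.e.\ that $s \circ \phi = \psi \circ (\id_\Lambda \otimes s)$. Comparing the concrete descriptions of the fibres in \cref{prop:bndry-map-fibre-simple} (for $X$) and \cref{prop:frml-grpd-bndry-map-fibre} (for $X_\sF$), this identifies $\Phi$ fibrewise with the equalizer locus inside $P^X_{v',w'}$. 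The hard step will be verifying that this equalizer is locally cut out by finitely many equations, so that $\Phi$ is not merely a monomorphism but affine of finite presentation: here one uses the filtration on $\Lambda_\sF$ together with smoothness of $X \to S$ to reduce, locally on $U \times X$, to checking linearity against a finite set of generators of the associated graded algebra.

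Granting this, part (i) follows immediately. By \cref{prop:bndry-map-repr-alg-sp} applied to $X$ (which uses the assumed algebraicity of $\sM(X)$ and $\sM_1(X)$), $\partial_X$ is representable by algebraic spaces, hence so is its base change $\sM_1(X) \times_{\sM(X)^2} \sM(X_\sF)^2 \to \sM(X_\sF)^2$. Composing with $\Phi$, which is representable by algebraic spaces since it is an affine monomorphism, gives that $\partial_{X_\sF}$ is representable by algebraic spaces. Since $\sM(X_\sF)^2$ is algebraic, $\sM_1(X_\sF)$ is an algebraic stack.

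For part (ii), if $X \to S$ is projective, \cref{thm:X-fppf-proj-bndry-map-aff-fp} makes $\partial_X$ affine and of finite presentation, properties stable under base change. Composing the resulting base change with $\Phi$ (affine and of finite presentation by the key step) yields that $\partial_{X_\sF}$ is affine and of finite presentation. For part (iii), assuming additionally that $\sM(X_\sF)$ is (locally) of finite presentation over $S$, the product $\sM(X_\sF)^2 \to S$ is also (locally) of finite presentation, and composing with the finitely presented $\partial_{X_\sF}$ shows $\sM_1(X_\sF) \to S$ is (locally) of finite presentation, using stability of these properties under composition.
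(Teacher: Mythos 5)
Your overall architecture matches the paper's: both arguments reduce everything to the hypothesized algebraicity of $\sM(X)$, $\sM_1(X)$ and $\sM(X_\sF)$ by observing that membership in $\sM_1(X_\sF)$ differs from membership in the ``underlying'' data only by the condition that $s$ be $\Lambda_{\sF,U}$--linear, and both exploit the filtration on $\Lambda_\sF$ to make that condition finitary. The packaging differs (you factor $\partial_{X_\sF}$ through a comparison map $\Phi$ to a fibre product and study $\Phi$; the paper works directly with the fibres $P^{X_\sF}_{v,w}$ of $\partial_{X_\sF}$), and your deductions of (ii) and (iii) from the key step are the same as the paper's.

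The genuine issue is the key step itself, which you correctly flag as ``the hard step'' but then resolve by an over-claim. You assert that $\Phi$ is an \emph{affine} monomorphism already in the setting of part (i), where $X \to S$ is not assumed projective. The fibre of $\Phi$ over a $U$--point is the locus in $U$ where a section of $(pr_1)_*\HHom_{\mcO_{U\times X}}(\Lambda_{\sF,U,1}\otimes E, F)$ vanishes; your proposed justification works ``locally on $U \times X$,'' but cutting out a closed (let alone affine, finitely presented) condition on $U$ from equations living on $U \times X$ requires descending along $pr_1 : U\times X \to U$, which is exactly where properness/projectivity enters. Without it, affineness of $\Phi$ is neither available nor needed. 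What \emph{is} both available and sufficient for (i) is weaker: the $\Lambda$--linearity condition is the equalizer of two explicit maps $\rho_1,\rho_2 : P^X_{E,F} \to P^X_{\Lambda_{\sF,U,1}\otimes E,\,F}$ (this uses that $\Lambda_\sF$ is split almost polynomial, so $\Lambda_{\sF,1}$ is finite locally free and generates $\Lambda_\sF$ locally --- note these are generators of $\Lambda_\sF$ itself in degree $\leq 1$, not of the associated graded). Both targets of this equalizer are algebraic spaces by \cref{prop:bndry-map-repr-alg-sp} under the hypotheses of (i), so the equalizer is an algebraic space, $\Phi$ (equivalently $\partial_{X_\sF}$) is representable by algebraic spaces, and (i) follows. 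Only in the projective case does \cref{thm:X-fppf-proj-bndry-map-aff-fp} make $P^X_{E,F}$ and $P^X_{\Lambda_{\sF,U,1}\otimes E,\,F}$ affine and of finite presentation over $U$, whence the equalizer is too, giving (ii). So your proof of (i) as written rests on an unproved (and in general doubtful) affineness claim, but the gap is repairable by replacing ``affine monomorphism'' with ``monomorphism whose fibres are equalizers of algebraic spaces,'' which is precisely how the paper proceeds.
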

\begin{proof}
We will show that the fibre $P^{X_\sF}_{v, w}$ of any map
$(v, w) : U \to \sB_1(X_\sF)^2$ with $U \in \Aff_{/S}$ is equivalent to an
equalizer of algebraic spaces and is, hence, an algeraic space. For this, we
first recall from \cite[\S 2]{ModRepFunGrpI} that for any
$z : Z \to S$, $\Lambda_{\sF, Z} := pr_1^*\Lambda_{\sF}$ is a filtered
ring sheaf and from \cite[Lemma 2.2]{ModRepFunGrpI}, that the stages of the
filtration are coherent. Then, we also know that $\Lambda_{\sF, Z}$ is
split almost polynomial \cite[\S 3.3, \S 5.1]{GeomNonAbHodgeFilt} so that
$\Lambda_{\sF, Z, 1}$ is locally free by
\cite[Theorem 2.11]{ModRepFunGrpI}.
This means $\Lambda_{\sF, Z, 1}$ is finite locally free.
At the same time, the split almost polynomial condition implies that
there is an open cover $\coprod_{j \in J} Z_i \to Z \times X$ such that
$\Lambda_{\sF, Z}|_{Z_j}$ is generated by $\Lambda_{\sF, U, 1}|_{Z_j}$
\cite[Proof of Theorem 2.11]{ModRepFunGrpI}. Since morphisms of sheaves
satisfy descent, we conclude that a morphism $s : (E, \phi) \to (F, \psi)$
is a morphism of $\Lambda_{\sF, Z}$--modules if and only if the following
diagram commutes:
\[\begin{tikzcd}
\Lambda_{\sF, Z, 1} \otimes E
\ar[r, "\phi"] \ar[d, "\id_{\Lambda_{\sF, Z, 1}} \otimes s" left] &
E \ar[d, "s"] \\
\Lambda_{\sF, Z, 1} \otimes F \ar[r, "\psi" below] & F
\end{tikzcd}\]

Now, consider the $\Lambda_{\sF, U}$--modules $(E_0, \phi), (F_0, \psi)$
corresponding to $v, w$ respectively. Then, we define two functors
$\rho_1, \rho_2 : P^X_{E, F} \to P^X_{\Lambda_{\sF, U, 1} \otimes E, F}$
as follows. $\rho_1$ sends an object $(r, s)$ to the composite
\[
r^<\Lambda_{\sF, Z, 1} \otimes r^<E \to[r<\phi] r^<E \to[s] r^<F
\]
Then, given a morphism $f : (r, s) \to (r', s')$, we have $f^<s' = s$, so that
$f^<(s' \circ (r')^<\phi) = f^<s' \circ f^<(r')^<\phi = s \circ r^<\phi$. This
shows that $f$ is also a morphism in $P^X_{\Lambda_{\sF, U, 1} \otimes E, F}$.
We set $\rho_1(f) = f$.
It is straightforward to verify that these mappings make $\rho_1$ a functor
commuting with the projection to $\Aff_{/S}$. Thus, it is a morphism of
prestacks.
The map $\rho_2$ sends $(r, s)$ to the composite
\[
r^<\Lambda_{\sF, Z, 1} \otimes r^<E \to[\id \otimes s]
r^<\Lambda_{\sF, Z, 1} \otimes r^<F \to[r^<\psi]
r^<F
\]
If $f : (r, s) \to (r', s')$ is a morphism, then $f^<s' = s$ again, and this
implies:
\[
f^<((r')^<\psi \circ (\id \otimes s'))
= f^<(r')^<\psi \circ f^<(\id \otimes s')
= f^<(r')^<\psi \circ (f^<\id \otimes f^<s')
= r^<\psi \circ (\id \otimes s)
\]
so that $f$ is again a morphism in $P^X_{\Lambda_{\sF, U, 1} \otimes E, F}$.
We again set $\rho_2(f) = f$, and can check that this gives a morphism of
prestacks.

We can then see that $P^{X_\sF}_{v, w}$ fits into the following
equalizer diagram:
\[\begin{tikzcd}
P^{X_\sF}_{v, w} \ar[r] &
P^X_{E, F}
  \ar[r, shift left, "\rho_1" above]
  \ar[r, shift right, "\rho_2" below] &
P^{X}_{\Lambda_{\sF, U, 1} \otimes E, F}
\end{tikzcd}\]
where the first map is the forgetful functor sending objects and morphisms
to themselves. To see this, we notice that the equalizer is strict since
all prestacks involved are fibred in $\Set$. Then, by
\cref{prop:frml-grpd-bndry-map-fibre}, the objects
and morphisms of $P^{X_\sF}_{v, w}$ are a subset of those of $P^X_{E, F}$
satisfying the commutativity constraint for being module homomorphisms. The
equality of the maps $\rho_1, \rho_2$ is precisely this condition.
It remains to show that
$P^X_{E, F}, P^X_{\Lambda_{\sF, U, 1} \otimes E, F}$ are algebraic spaces.
This follows from \cref{prop:bndry-map-repr-alg-sp}. This shows point (i).

In the case that
$X \to S$ is projective, \cref{thm:X-fppf-proj-bndry-map-aff-fp} applies
and shows that $P^X_{E, F}, P^X_{\Lambda_{\sF, U, 1} \otimes E, F}$ are
affine and of finite presentation over $U$. Since an equalizer over $S$
is also an equalizer over $U$, and the properties of being affine and of
finite presentation are preserved under finite limits, we must have that
$P^{X_\sF}_{v, w}$ is affine and of finite presentation
over $S$. This shows point (ii).

For (iii), we observe that $\partial_{X_\sF}$ being of finite
presentation and $\sM(X_\sF)$ being (locally) of finite presentation implies
that the morphism $\sM_1(X_\sF) \to[\partial_{X_\sF}] \sM(X_\sF)^2 \to S$
is (locally) of finite presentation.
\end{proof}

We will now recall that $\sM(X_\sF)$ is well-behaved under some mild conditions,
which is well-known \cite[Theorem 6.13]{GeomNonAbHodgeFilt},
and show that some of the good behaviour of $\sM(X_\sF)$ transfers
over to $\sM_1(X_\sF)$.

\begin{prop}\label{prop:mod-Lambda-forget-aff-loc-fp}
For a smooth formal groupoid $(X, \sF)$ over $S$, if the morphism
$X \to S$ is smooth and projective, then
the forgetful functor $\sM(X_\sF) \to \sM(X)$ is affine and locally of
  finite presentation.
In particular, $\sM(X_\sF)$ is algebraic, locally of finite presentation and has
  affine diagnoal over $S$.
\end{prop}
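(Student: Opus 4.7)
The plan is to prove the main statement—that the forgetful functor $\sM(X_\sF) \to \sM(X)$ is affine and locally of finite presentation—by computing its fibres over affine test schemes using the techniques from \cref{sec:ArrowBunFormalGrpd}; the ``in particular'' clause will then follow from \cite[Theorem 1.0.1]{Wang-BunG} combined with standard permanence properties of affine morphisms.

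First I would pick a morphism $v : U \to \sM(X)$ with $U \in \Aff_{/S}$, classifying a finite locally free sheaf $E := v^<\mcE(X)$ on $U \times X$, and describe the fibre $F_v := \sM(X_\sF) \times_{\sM(X)} U$ concretely. By the crystallization equivalence (\cref{prop:diff-op-ring}) together with its compatibility with pullbacks (\cref{prop:cryst-pullback-compat}), the objects of $F_v$ over $r : V \to U$ in $\Aff_{/S/U}$ are exactly the $\Lambda_{\sF, V}$-module structures on $r^<E$ extending its $\mcO_{V \times X}$-module structure, with morphisms being equalities of such structures under pullback.

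Next, I would exploit the fact, recalled in the proof of \cref{thm:mod-st-arr-bun-formal-grpd-alg}, that $\Lambda_\sF$ is split almost polynomial with $\Lambda_{\sF, 1}$ finite locally free. Such a module structure on $r^<E$ is then equivalent to a morphism $\phi : r^<\Lambda_{\sF, U, 1} \otimes r^<E \to r^<E$ of $\mcO_{V \times X}$-modules satisfying a unit axiom (over $\mcO \subset \Lambda_{\sF, U, 1}$) and a Leibniz-type axiom expressing compatibility with the multiplication $\Lambda_{\sF, U, 1} \otimes \Lambda_{\sF, U, 1} \to \Lambda_{\sF, U, 2}$, using the coherence of the filtration pieces from \cite[Lemma 2.2]{ModRepFunGrpI}. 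Unrestricted such $\phi$ are, by \cref{prop:sec-to-morphism} and \cref{prop:geom-vec-bun-Gro-cons}, classified by the mapping stack $\Map_U(U \times X, \bV[\Lambda_{\sF, U, 1} \otimes E, E])$; since $[\Lambda_{\sF, U, 1} \otimes E, E]$ is finite locally free, the projection $\bV[\Lambda_{\sF, U, 1} \otimes E, E] \to U \times X$ is affine and of finite presentation by \cref{prop:vec-bun-proj-affine}, and since $U \times X \to U$ is projective, flat and finitely presented, Lemma 3.1.4 of \cite{Wang-BunG} makes the mapping stack itself affine and of finite presentation over $U$. The two axioms cut out $F_v$ as an equalizer of two maps between such mapping stacks, entirely parallel to the $\rho_1, \rho_2$ construction in the proof of \cref{thm:mod-st-arr-bun-formal-grpd-alg}. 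Since affineness and finite presentation are stable under equalizers and base change, $F_v \to U$ is affine and of finite presentation, proving the first claim.

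The main obstacle will be verifying that the unit and Leibniz constraints really do assemble into an equalizer diagram of this form; this is essentially bookkeeping on the non-algebraic site $U \times X_\sF$ but requires care to translate module-theoretic identities into equalities of morphisms of mapping stacks. The ``in particular'' statement follows formally: \cite[Theorem 1.0.1]{Wang-BunG} yields that $\sM(X)$ is algebraic, locally of finite presentation and has affine diagonal over $S$. Since $\sM(X_\sF) \to \sM(X)$ is affine, it is representable by schemes, so $\sM(X_\sF)$ is algebraic; composing with $\sM(X) \to S$ and using that both maps are locally of finite presentation gives local finite presentation over $S$; and since the forgetful functor is affine (in particular separated), $\Delta_{\sM(X_\sF)/S}$ factors as the closed immersion $\sM(X_\sF) \to \sM(X_\sF) \times_{\sM(X)} \sM(X_\sF)$ followed by the base change of $\Delta_{\sM(X)/S}$, which is affine, yielding affine diagonal.
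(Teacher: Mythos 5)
Your proposal is correct and follows essentially the same route as the paper: both compute the fibre of the forgetful map over $v : U \to \sM(X)$, identify it (after strictifying away the auxiliary isomorphism) with the prestack of $\Lambda_{\sF,V}$--module structures on $r^<E$, and realize that prestack as a finite limit/equalizer of affine, finitely presented stacks of the form $P^X_{\Lambda_{\sF,U,1}^{\otimes 2}\otimes E,\, E}$, $P^X_{\Lambda_{\sF,U,1}\otimes E,\, E}$, $P^X_{\mcO_{U\times X}\otimes E,\, E}$ (equivalently, mapping stacks into $\bV[-,-]$) cutting out the unit and associativity constraints, before deducing the ``in particular'' clause from \cite[Theorem 1.0.1]{Wang-BunG} and permanence of affineness. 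The only cosmetic difference is that you argue affine diagonal directly by factoring $\Delta_{\sM(X_\sF)/S}$ through the affine base change of $\Delta_{\sM(X)/S}$, where the paper cites \cite[Lemma 4.9]{ModQuivBun}; both are fine.
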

\begin{proof}
We first show that the forgetful map $\sM(X_\sF) \to \sM(X)$ is affine and of
finite presentation. To see this, we observe that for any morphism
$v : U \to \sM(X)$ for some object $\Aff_{/S}$ corresponding to a finite locally
free $\mcO_{U \times X}$--module $E$,
we consider the fibre product $\sM(X_\sF; E) := \sM(X_\sF) \times_{\sM(X)} U$.
Unwrapping the definition of fibre products, the underlying category
of this stack has the following description:
\begin{itemize}
\item Objects are tuples $(v, F, \phi, \alpha)$ where:
  \begin{itemize}
  \item $v : V \to U$ is an object in $\Aff_{/S/U}$.
  \item $F$ is a finite locally free sheaf on $V \times X$.
  \item $\phi : \Lambda_{\sF, V} \otimes F \to F$ is a $\Lambda_{\sF, V}$--module
    structure on $F$.
  \item $\alpha : v^<E \to F$ is an isomorphism of locally free sheaves.
  \end{itemize}
\item Morphisms $(v, F, \phi, \alpha) \to (v', F', \phi', \alpha')$ are tuples
  $(f, a)$ where:
  \begin{itemize}
  \item $f : V = \dom(v) \to V' = \dom(v')$ is a morphism over $U$.
  \item $a : f^<F' \to F$ is an isomorphism of $\Lambda_{\sF, V}$--modules, such
    that the following diagram commutes:
  \[\begin{tikzcd}
  v^<E = f^<(v')^<E \ar[r, "f^<\alpha'"] \ar[d, "\id" left] &
  f^<F' \ar[d, "a"] \\
  v^<E \ar[r, "\alpha" below] &
  F \\
  \end{tikzcd}\]
  \end{itemize}
\end{itemize}
We observe that each object $(v, F, \phi, \alpha)$ of this category is
isomorphic to the object
\[
(v, v^<E, \alpha^{-1} \circ \phi \circ (\id_{\Lambda_{\sF, V}} \otimes \alpha),
  \id_{v^<E})
\]
via the morphism $(\id_V, \alpha)$. Next, consider a morphsim
\[
(f, a) : (v, v^<E, \phi, \id_{v^<E}) \to (v', (v')^<E, \phi', \id_{(v')^<E})
\]
The commutativity condition in the definition of morphisms becomes
\[
a \circ f^<\id_{(v')^<E} = \id_{v^<E} \circ \id_{v^<E}
\iff a = \id_{v^<E}
\]
which implies that $f^<\phi' = \phi$, by the fact that $a$ is a morphism
$\Lambda_{\sF, V}$--modules. This shows that the underlying category is, in
turn, equivalent to the category with the following description:
\begin{itemize}
\item Objects are tuples $(v, \phi)$ where:
  \begin{itemize}
  \item $v : V \to U$ is an object in $\Aff_{/S/U}$.
  \item $\phi : \Lambda_{\sF, V} \otimes v^<E \to v^<E$ is a
    $\Lambda_{\sF, V}$--module structure on $F$.
  \end{itemize}
\item Morphisms $(v, \phi) \to (v', \phi')$ is a morphism
  $f : V = \dom(v) \to V' = \dom(v')$ over $U$ such that $f^<\phi' = \phi$.
\end{itemize}

A very similar argument as in the proof of
\cref{thm:mod-st-arr-bun-formal-grpd-alg} shows that this is a finite limit
involving $P^X_{\Lambda_{\sF, U, 1}^{\otimes 2} \otimes E, E}$,
$P^X_{\Lambda_{\sF, U, 1} \otimes E, E}$,
$P^X_{\mcO_{U \times X} \otimes E, E}$, that captures the commutativity
constraints defining the structure map of a module sheaf, and hence is an
affine scheme of finite presentation by \cref{thm:X-fppf-proj-bndry-map-aff-fp}
and the preservation of these properties under finite limits.
Since the morphism $\sM(X_\sF) \to \sM(X)$ is affine and of finite presentation,
while $\sM(X)$ is algebraic locally of finite presentation and has affine
diagonal by \cite[Theorem 1.0.1]{Wang-BunG}, $\sM(X_\sF)$
is algebraic by
\cite[\href{https://stacks.math.columbia.edu/tag/05UM}{Lemma 05UM}]
{stacks-project} and has affine diagonal by \cite[Lemma 4.9]{ModQuivBun}.
Then, in the composite $\sM(X_\sF) \to \sM(X) \to S$, the first morphism is of
finite presentation and the second one is locally of finite presentation by
\cite[Theorem 1.0.1]{Wang-BunG} again, showing that $\sM(X_\sF)$ is locally
of finite presentation over $S$.
\end{proof}

\begin{rmk}
This is likely a direct consequence of the fibre computation in the proof of
\cite[Theorem 6.13]{GeomNonAbHodgeFilt}, but we felt it is useful to have
a simpler proof in the realm of $1$--stacks.
\end{rmk}

\begin{thm}\label{thm:mod1-Lambda-alg-aff-diag-loc-fp}
For a smooth formal groupoid $(X, \sF)$ over $S$, if the morphism
$X \to S$ is smooth and projective, then $\sM_1(X_\sF)$ is algebraic, locally of
finite presentation and has affine diagonal.
\end{thm}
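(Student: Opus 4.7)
The plan is to invoke \cref{thm:mod-st-arr-bun-formal-grpd-alg}, whose hypotheses were set up precisely to make this theorem follow from the algebraicity and good behaviour of the three auxiliary moduli stacks $\sM(X), \sM_1(X)$ and $\sM(X_\sF)$. I first collect these inputs: $\sM(X)$ is algebraic, locally of finite presentation and has affine diagonal by \cite{Wang-BunG}; $\sM_1(X)$ shares these properties by \cref{thm:mod-st-arr-bun-alg} (using that $X \to S$ smooth and projective implies $X \to S$ is also flat and of finite presentation); and $\sM(X_\sF)$ has the same three properties by \cref{prop:mod-Lambda-forget-aff-loc-fp}. In particular, all three stacks are algebraic, so the hypotheses of parts (i)--(iii) of \cref{thm:mod-st-arr-bun-formal-grpd-alg} are satisfied.

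From \cref{thm:mod-st-arr-bun-formal-grpd-alg}(i), $\sM_1(X_\sF)$ is then algebraic, and from part (ii), the boundary map $\partial_{X_\sF} : \sM_1(X_\sF) \to \sM(X_\sF)^2$ is affine and of finite presentation. Combining the finite presentation of $\partial_{X_\sF}$ with the local finite presentation of $\sM(X_\sF)$ over $S$ (from \cref{prop:mod-Lambda-forget-aff-loc-fp}) and local finite presentation of $\sM(X_\sF)^2 \to \sM(X_\sF)$ (a base change), the composite $\sM_1(X_\sF) \to \sM(X_\sF)^2 \to S$ is locally of finite presentation, which is exactly part (iii) of \cref{thm:mod-st-arr-bun-formal-grpd-alg} applied in this setting.

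The only remaining point is affineness of the diagonal, and here is where a small additional argument is needed. The plan is to factor
\[
\Delta_{\sM_1(X_\sF)} :
\sM_1(X_\sF)
\to[\Delta_{\partial_{X_\sF}}]
\sM_1(X_\sF) \times_{\sM(X_\sF)^2} \sM_1(X_\sF)
\to[(\partial_{X_\sF}, \partial_{X_\sF})^*\Delta_{\sM(X_\sF)^2}]
\sM_1(X_\sF) \times \sM_1(X_\sF)
\]
as the relative diagonal of $\partial_{X_\sF}$ composed with the base change of the diagonal of $\sM(X_\sF)^2$. Since $\partial_{X_\sF}$ is affine it is separated, so $\Delta_{\partial_{X_\sF}}$ is a closed immersion and in particular affine. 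Since $\sM(X_\sF)$ has affine diagonal and affine morphisms are stable under products and base change, $\Delta_{\sM(X_\sF)^2}$ is affine, and its pullback along $(\partial_{X_\sF}, \partial_{X_\sF})$ is again affine. The composite of two affine morphisms being affine, $\Delta_{\sM_1(X_\sF)}$ is affine as required.

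The routine steps are the invocations of \cref{thm:mod-st-arr-bun-formal-grpd-alg}, \cref{thm:mod-st-arr-bun-alg} and \cref{prop:mod-Lambda-forget-aff-loc-fp}; the only genuinely new observation is the diagonal factorization in the last paragraph, and the main thing to be careful about is that separatedness of $\partial_{X_\sF}$ is used to guarantee that $\Delta_{\partial_{X_\sF}}$ is a closed immersion rather than merely representable by algebraic spaces. I do not anticipate any serious obstacle.
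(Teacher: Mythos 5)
Your proof is correct and follows essentially the same route as the paper: algebraicity and local finite presentation are deduced from \cref{thm:mod-st-arr-bun-formal-grpd-alg} together with \cref{prop:mod-Lambda-forget-aff-loc-fp}, and the affine diagonal is transferred to $\sM_1(X_\sF)$ along the affine morphism $\partial_{X_\sF}$. The only difference is that you prove this transfer directly, via the factorization of $\Delta_{\sM_1(X_\sF)}$ through the relative diagonal of $\partial_{X_\sF}$ and the base change of $\Delta_{\sM(X_\sF)^2}$, whereas the paper cites \cite[Lemma 4.9]{ModQuivBun} for exactly this statement; your argument is a correct standalone proof of that cited lemma.
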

\begin{proof}
$\sM(X_\sF)$ has these properties by \cref{prop:mod-Lambda-forget-aff-loc-fp},
and hence, so does $\sM(X_\sF)^2$. Then, since the morphism $\partial_{X_\sF}$
is affine by \cref{thm:mod-st-arr-bun-formal-grpd-alg}(ii), $\sM_1(X_\sF)$
is algebraic by
\cite[\href{https://stacks.math.columbia.edu/tag/05UM}{Lemma 05UM}]
{stacks-project} and has affine diagonal by \cite[Lemma 4.9]{ModQuivBun}.
Then, the morphism $\sM(X_\sF)^2 \to \sM(X)^2$ is locally of finite presentation
by \cref{prop:mod-Lambda-forget-aff-loc-fp} again, while
$\partial_{X_\sF}$ is locally of finite presentation by
\cref{thm:mod-st-arr-bun-formal-grpd-alg}(ii). Also,
$\sM(X)^2$ is locally of finite presentation by \cite[Theorem 1.0.1]{Wang-BunG}.
This shows that the composite
\[
\sM_1(X_\sF) \to[\partial_{X_\sF}] \sM(X_\sF)^2 \to \sM(X)^2 \to S
\]
is locally of finite presentation.
\end{proof}

\begin{thm}\label{thm:mod-st-conn-alg-lfp-aff-diag}
In the case that $X \to \Spec(k)$ is a smooth and projective morphism,
the moduli stacks
$\sM_1(X_{Dol}), \sM_1(X_{dR}), \sM_1(X_{Hod})$ parametrizing
Higgs bundle morphisms, connection morphisms and $\lambda$--connection
morphisms respectively over $X$ are all agebraic, locally of finite presentation
and have affine diagonal.
\end{thm}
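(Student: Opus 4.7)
The plan is to derive this theorem as a direct application of \cref{thm:mod1-Lambda-alg-aff-diag-loc-fp} to each of the three formal groupoids $(X, \sF_{dR}), (X \times_k \bA^1_k, \sF_{Hod})$ and $(X, \sF_{Dol})$ from \cref{exm:dR-st,exm:Dol-st,exm:Hodge-st}. To do this, I need to verify two hypotheses for each case: first, that the formal groupoid $(Y, \sF)$ in question is smooth in the sense of \cref{defn:form-cat}, and second, that the structure morphism $Y \to S$ of its scheme of objects to the relevant base is smooth and projective.

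For the de Rham and Dolbeault cases, we take $S = \Spec(k)$ and the scheme of objects is $X$ itself; the smoothness and projectivity of $X \to \Spec(k)$ are given by hypothesis. For the Hodge case, we take $S = \bA^1_k$ and the scheme of objects is $X \times_k \bA^1_k$; the morphism $X \times_k \bA^1_k \to \bA^1_k$ is smooth and projective as a base change of the smooth and projective morphism $X \to \Spec(k)$. So the ``smooth and projective'' portion of the hypothesis is immediate in all three cases.

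To verify that each $\sF$ is a smooth formal groupoid, I would check that the source and target maps $s, t : \sF \to Y$ are formally smooth. Since $\sF_{dR}, \sF_{Dol}$ and $\sF_{Hod}$ are each defined as formal completions of a smooth scheme along the image of a closed embedding (the diagonal composed, where needed, with the zero section of a vector bundle or with the $0$--section of $\bA^1_k$, followed by taking strict transforms in the Hodge case), the source and target maps factor as $\sF \to T \to Y$ with $T$ smooth over $Y$ via one of the two projections and $\sF \to T$ a formal completion along a smooth subscheme. Formal completion along a smooth subscheme of a smooth scheme is formally smooth, so the composition is formally smooth. These facts are essentially those invoked in \cites{NonAbHodgeFilt, GeomNonAbHodgeFilt}; the only mild subtlety is in the Hodge case where one has to use that removing the strict transform $B'$ from the blow-up $B$ still leaves the relevant projections smooth along $\Delta'(X \times_k \bA^1_k)$, which can be checked on an affine chart.

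With smoothness of the formal groupoid and of $Y \to S$ in hand, \cref{thm:mod1-Lambda-alg-aff-diag-loc-fp} applies and delivers the three statements simultaneously: $\sM_1(X_{dR}), \sM_1(X_{Dol})$ and $\sM_1(X_{Hod})$ are algebraic, locally of finite presentation and have affine diagonal. The main (minor) obstacle is the smoothness verification for $\sF_{Hod}$, since it involves the blow-up construction of \cref{exm:Hodge-st} rather than a direct formal completion of a smooth scheme along a smooth subscheme; here one just needs to ensure that the chosen closed embedding $\Delta' : X \times_k \bA^1_k \to Y$ lands in the smooth locus of both projections $Y \to X \times_k \bA^1_k$, which follows from the classical description of the deformation to the normal cone in the smooth setting.
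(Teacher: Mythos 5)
Your proposal is correct and takes essentially the same route as the paper: the paper's entire proof is to take $S = \Spec(k)$ for the de Rham and Dolbeault cases, $S = \bA^1_k$ for the Hodge case, and apply \cref{thm:mod1-Lambda-alg-aff-diag-loc-fp}. The additional verification you supply --- that each $(Y,\sF)$ is a smooth formal groupoid and that $Y \to S$ is smooth and projective --- is left implicit in the paper, so your write-up is if anything slightly more careful.
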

\begin{proof}
For the first two, take $S = \Spec(k)$, and for the third, take
$S = \bA^1_k$, and apply \cref{thm:mod1-Lambda-alg-aff-diag-loc-fp}.
\end{proof}

\begin{rmk}
The result \cite[Theorem 6.13]{GeomNonAbHodgeFilt} states that the moduli
stack of complexes of $\Lambda_\sF$--modules that are perfect as complexes
of $\mcO_X$--modules is a locally geometric $n$--stacks. However, we can unpack
the meaning of this to discover that this shows the algebraicity of the
sub--$1$--stack of complexes concentrated in degree $0$.
\end{rmk}


\subsection{Quiver Bundles on Formal Groupoids}\label{subsec:QuivBun}

In \cite[\S 4]{ModQuivBun}, for each stack $\mcY$ and each fintite simplicial
set $I$, we gave a construction of a moduli stack $\mcM_{\Vect(\mcY), I}$
parametrizing diagrams of vector bundles of shape $I$ on a stack $X$. In fact,
the construction given there is still well-defined when $\mcY$ is a prestack.
This construction was given as a finite iterated limit involving the (pre)stacks
$\sM(\mcY), \sM_1(\mcY)$ and $\mathrm{pt}$. Now, in light of
\cref{thm:arr-bun-arr-sec-equiv} and
\cref{thm:mod-arr-bun-formal-grpd-concrete}, if we take $\mcY = X_\sF$
for some formal groupoid $(X, \sF)$, by varying $\sF$ to be
$\sF_{Dol}, \sF_{dR}$ or $\sF_{Hod}$, we obtain
moduli stacks parametrizing $I$--shaped diagrams of connections,
$\lambda$--connections, or Higgs bundles.

\begin{notn}
For notational consistency with the rest of the present paper,
we will write $\mcM_{\Vect(X_\sF), I}$ as $\sM_I(X_\sF)$.
\end{notn}

Furthermore, $\sM_I(X_\sF)$ is contravariantly functorial in $I$ by
construction. Taking $I = \Delta^n$ for various $n$, we thus obtain, in
particular, a simplicial object $\sM_{\bullet}(\mcY)$ whose levels are moduli
stacks of diagrams of vector bundles of shape $\Delta^n$, and whose structure
maps are given by ``face'' and ``degeneracy'' maps. This is what we mean by the
categorification of the various sides of non-Abelian Hodge theory. In
particular, $\sM_\bullet(X_{Hod})$ is the categorification of the non-Abelian
Hodge filtration.

\begin{rmk}\label{rmk:smplcl-stacks}
However, we note that since there are no higher morphisms in the
category of sheaves over a prestack, we cannot expect these objects to hold much
important information beyond simplicial dimension $1$: that is, for sheaves over
$1$--stacks, $\sM_0(\mcY), \sM_1(\mcY)$ should contain all the important
information. The full simplicial theory becomes relevant when we are dealing
with sheaves of simplicial Abelian groups, chain complexes or spectra on
(derived) stacks, but we will defer a discussion of this point to future
work.
\end{rmk}

We now observe that $\sM_I(X_\sF)$ is well-behaved under some mild conditions
on $(X, \sF)$.

\begin{thm}\label{thm:mod-quiv-bun-formal-grpd-loc-fin-pre}
For any smooth formal groupoid $(X, \sF)$, if the morphism $X \to S$
is smooth and projective, for any finite simplicial set $I$, $\sM_I(X_\sF)$ is
algebraic, locally of finite presentation and has affine diagonal.
\end{thm}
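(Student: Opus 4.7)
The plan is to reduce to the construction of $\sM_I(X_\sF)$ given in \cite[\S 4]{ModQuivBun}, where $\sM_I(\mcY) = \mcM_{\Vect(\mcY), I}$ is built as a finite iterated $2$-limit in $\PSt_{/S}$ whose nodes are copies of $\sM(X_\sF)$, copies of $\sM_1(X_\sF)$, and the terminal object $S$, and whose edges are projections, boundary maps $\partial_{X_\sF}$, composition maps, and diagonal maps. Since for a finite simplicial set $I$ the diagram indexing this limit has only finitely many nodes (one for each non-degenerate simplex of $I$ of dimension $\leq 1$, together with further nodes encoding the commutativity of higher simplices), it suffices to show that the three desired properties --- being algebraic, being locally of finite presentation, and having affine diagonal --- are preserved by the fibre products and maps appearing in this limit.

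First I would collect the input: by \cref{prop:mod-Lambda-forget-aff-loc-fp} and \cref{thm:mod1-Lambda-alg-aff-diag-loc-fp}, both $\sM(X_\sF)$ and $\sM_1(X_\sF)$ are algebraic, locally of finite presentation, and have affine diagonal over $S$; the boundary map $\partial_{X_\sF} : \sM_1(X_\sF) \to \sM(X_\sF)^2$ is affine and of finite presentation by \cref{thm:mod-st-arr-bun-formal-grpd-alg}(ii). The composition map for $\sM_1(X_\sF) \times_{\sM(X_\sF)} \sM_1(X_\sF) \to \sM_1(X_\sF)$ sends a pair of composable arrow bundles to their composite, and the higher commutativity constraints for an $n$-simplex are cut out by equalizers of two such compositions of boundary maps. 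Each of these morphisms is built from $\partial_{X_\sF}$, projections, and diagonals, hence is itself affine and of finite presentation by \cref{thm:X-fppf-proj-bndry-map-aff-fp} combined with \cref{thm:mod-st-arr-bun-formal-grpd-alg}.

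Next I would invoke the standard stability results from the Stacks Project: fibre products of algebraic stacks are algebraic (\cite[\href{https://stacks.math.columbia.edu/tag/04TF}{Tag 04TF}]{stacks-project}), being locally of finite presentation is preserved under fibre products when the structure morphisms are so, and the affine diagonal property is preserved under fibre products by \cite[Lemma 4.9]{ModQuivBun}. Since a finite iterated $2$-limit in $\PSt_{/S}$ can be computed by successive $2$-fibre products, an induction on the number of non-degenerate simplices of $I$ (or, equivalently, on the number of limit steps in the defining diagram of $\sM_I(X_\sF)$) then yields all three conclusions simultaneously.

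The main obstacle I expect is purely bookkeeping: one needs to confirm that the iterated-limit description of $\sM_I(X_\sF)$ from \cite[\S 4]{ModQuivBun} really decomposes into the finitely many fibre products described above, and, at each stage of the induction, one must verify that the map along which the next fibre product is taken is itself (locally) of finite presentation and has affine diagonal, so that the induction hypothesis propagates. Once the induction step is set up --- via $\partial_{X_\sF}$ and the composition morphisms, both of which inherit affineness and finite presentation from \cref{thm:mod-st-arr-bun-formal-grpd-alg}(ii) --- the conclusion follows formally, with no further geometric input specific to $(X, \sF)$.
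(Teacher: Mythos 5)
Your proposal follows essentially the same route as the paper's proof: the paper simply observes that $\sM_I(X_\sF)$ is by construction a finite iterated limit involving $\sM_1(X_\sF)$, $\sM(X_\sF)$ and the terminal prestack, cites \cref{prop:mod-Lambda-forget-aff-loc-fp} and \cref{thm:mod1-Lambda-alg-aff-diag-loc-fp} for the properties of those building blocks, and concludes by preservation of the three properties under finite limits. Your additional bookkeeping about the boundary and composition maps and the induction on limit steps is a more explicit unpacking of the same argument, not a different one.
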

\begin{proof}
By construction, $\sM_I(X_\sF)$ is an iterated finite limit
involving $\sM_1(X_\sF)$, $\sM(X_\sF)$ and $\mathrm{pt}$. The claim now follows
\cref{prop:mod-Lambda-forget-aff-loc-fp},
\cref{thm:mod1-Lambda-alg-aff-diag-loc-fp}
and the fact that the properties involved are preserved under taking finite
limits.
\end{proof}


\subsection{Non-Abelian Hodge Theory}\label{subsec:NonAbHodge}

We wish to have an analogue of the non-Abelian Hodge correspondence between
the simplicial stacks $\sM_\bullet(X_{dR})$ and $\sM_\bullet(X_{Dol})$.
In light of \cref{rmk:smplcl-stacks}, in this work,
we will mainly deal with the truncated simplicial objects consisting of
$\sM_1(-)$ and $\sM(-)$ and the terminal stack $\Aff_{/S}$ in higher degrees,
as we are focussing on sheaves of sets on $1$--stacks.
We first recall that the non-Abelian Hodge correspondence has some
restrictions. First, it is a bijection between moduli spaces of flat
connections on one side and Higgs bundles on the other side, which happens to
be a homeomorphism if we restrict to polystable objects with vanishing
rational Chern classes on the Higgs bundle side. It is known that
it cannot be extended
\cite[Counterexample on pages 38---39]{ModRepFunGrpII}. Furthermore,
the mapping is neither complex analytic nor smooth even though both sides have
complex structures. This suggests that we need some way to pass from algebraic
stacks to topological stacks to address non-Abelian Hodge theory.
This is possible through analytification of stacks, a simple version of which
we will now discuss.
For this purpose, let us now suppose $k = \bC$ for simplicity.

%


\begin{cns}\label{cns:analytification-mod-st-quiv-bun-formal-grpd}
Suppose $S$ is of finite presentation over $\bC$.
Given any finite simplicial set $I$ and a smooth formal groupoid $(X, \sF)$ with $X \to S$ projective and smooth, choose a smooth atlas
$a : A = \coprod_{j \in J} U_j \to \sM_I(X_\sF)$ such that the morphism $a$ is also
affine and each $U_j \in \Aff_{/S}$ is of finite presentation, using
\cref{thm:mod-quiv-bun-formal-grpd-loc-fin-pre} and
\cite[\href{https://stacks.math.columbia.edu/tag/04YF}{Lemma 04YF}]
{stacks-project}. Since $a$ is affine,
$R := A \times_{\sM_I(X_\sF)} A = \coprod_{i, j} U_i \times_{\sM_I(X_\sF)} U_j$
is a disjoint union of affine schemes $U_i \times_{\sM_I(X_\sF)} U_j$
so that $R$ is a locally separated algebraic space. Of course, $A$ is also
a locally separated algebraic space for the same reason.
Furthermore, since $\sM_I(X_\sF)$ and $A$ are locally of finite presentation,
so is $R$ since this property is preserved by fibre products of stacks.
Next, by
\cite[\href{https://stacks.math.columbia.edu/tag/04T5}{Lemma 04T5}]
{stacks-project},
$(A, R)$ gives a presentation of $\sM_I(X_\sF)$.
This allows us to consider analytifications of $A, R$ \cite{GenGAGA} to obtain a groupoid object
internal to complex analytic spaces, noting that analytification is compatible
with fibre products \cite[Theorem 2.2.3.]{NonArchAnalytification}. This, in
turn, also gives a groupoid object in topological spaces. We can take the
quotient stacks over the respective categories to get complex analytic and
topological stacks:
\[
\sM_I^{an}(X_\sF), \sM_I^{top}(X_\sF)
\]
respectively. Note that we retain the notation $X_\sF$ although we are not yet
talking about the analytification of $X$ and $\sF$.
\end{cns}

We now state our desired categorification of the non-Abelian Hodge
correspondence as a conjecture:

\begin{cnj}
Given a smooth projective variety $X$ over $k$, there is a suitable substack
\[
\sM^{top, nice}(X_{Dol}) \subset \sM_1^{top}(X_{Dol}),
\]
and mappings
\[
\sM^{top}(X_{dR}) \to \sM^{top, nice}(X_{Dol}),
\sM_1^{top}(X_{dR}) \to \sM_1^{top, nice}(X_{Dol})
\]
that induce a ``categorical'' equivalence of simplicial topological stacks
\[
\sM^{top}_{\bullet}(X_{dR}) \to[\simeq] \sM^{top, nice}_{\bullet}(X_{Dol})
\]
for some suitable meaning of ``categorical'' in context of simplicial objects
in topological stacks. This should still hold if we
replace $\sM_\bullet(X_{dR}), \sM_\bullet(X_{Dol})$ with
the respective simplicial stacks obtained by applying our constructions
in this paper with derived mapping stacks in place of ordinary mapping stacks.
\end{cnj}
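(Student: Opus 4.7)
The plan is to build the conjectural equivalence level-by-level using the classical non-Abelian Hodge correspondence as input, with $\sM^{top, nice}(X_{Dol})$ defined to parametrize polystable Higgs bundles with vanishing rational Chern classes, since Corlette--Simpson identifies precisely these with semisimple flat connections. Correspondingly, one restricts $\sM^{top}(X_{dR})$ to semisimple flat connections so that the object-level map becomes a homeomorphism. This classical correspondence can then be promoted to a map of topological stacks by transporting it through the topological groupoid presentations produced by \cref{cns:analytification-mod-st-quiv-bun-formal-grpd}, provided one verifies that the correspondence descends along the smooth affine atlases used there.

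For the morphism level $\sM_1^{top}(X_{dR}) \to \sM_1^{top, nice}(X_{Dol})$, the key input is that, at the level of the abelian categories of semisimple objects, the correspondence is functorial: morphisms of polystable Higgs bundles with vanishing Chern classes correspond bijectively to morphisms of the associated flat connections via the Tannakian dictionary with $\pi_1(X)$-representations. To promote this from a fibrewise bijection to a map of topological stacks, one needs a relative Corlette--Simpson theorem for families parametrized by an affine scheme $U$, compatible with pullback along morphisms of $U$, so the correspondence descends through the groupoid presentation of $\sM_1$. Compatibility with the boundary map $\partial_{X_\sF}$ from \cref{defn:bndry-map} is then automatic, since the correspondence preserves source and target by construction.

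Simplicial compatibility follows from the observation of \cref{rmk:smplcl-stacks}: since the simplicial topological stacks $\sM^{top}_\bullet(X_{dR})$ and $\sM^{top, nice}_\bullet(X_{Dol})$ are determined by their $0$- and $1$-levels in the $1$-stack setting, it suffices to verify that the face and degeneracy maps on both sides intertwine with the object- and morphism-level maps already constructed. For the face maps this reduces to source--target preservation, and for the degeneracy map it reduces to the observation that non-Abelian Hodge sends identity morphisms to identity morphisms, both of which are immediate from the construction above.

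The main obstacle, and the reason this remains conjectural, lies in the morphism-level construction: the classical Corlette--Simpson correspondence is not complex-analytic or algebraic but only a homeomorphism, and establishing a family version with sufficient regularity to define a morphism of topological stacks --- rather than merely a fibrewise bijection of underlying sets --- requires substantial harmonic-theoretic work beyond the existing literature, likely through a careful study of the parametrized Hitchin--Kobayashi correspondence. The derived enhancement is more delicate still: it would require a derived non-Abelian Hodge correspondence tracking higher-coherence morphisms of $\Lambda_\sF$-modules over analytic bases, where even the formulation of the ``nice'' substack in the derived setting is not yet understood and will presumably require derived stability conditions on complexes of $\Lambda_{\sF_{Dol}}$-modules before the statement can be made precise.
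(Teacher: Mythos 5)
This statement is a conjecture, and the paper supplies only a brief ``proof idea'' rather than a proof, so there is no complete argument to measure yours against; what can be compared is strategy, and yours is genuinely different from the paper's. The paper's idea is to exploit the Hodge stack: degreewise, $\sM^{top}_\bullet(X_{dR})$ and $\sM^{top}_\bullet(X_{Dol})$ are fibres of $\sM_\bullet(X_{Hod}) \to \bA^1_\bC$, and one hopes to pass from one fibre to the other by applying Simpson's ``preferred sections'' of that $\lambda$-family in each simplicial degree, then checking compatibility with the face and degeneracy maps. You bypass $X_{Hod}$ entirely and instead propose to globalize the classical Corlette--Simpson homeomorphism fibrewise over the atlases of \cref{cns:analytification-mod-st-quiv-bun-formal-grpd}, with Tannakian functoriality handling the morphism level and source--target preservation handling the simplicial maps. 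Your route has the merit of making the missing analytic input explicit (a family version of Corlette--Simpson with enough regularity to descend through the groupoid presentations), and your identification of that as the real obstruction is consistent with why the statement is left as a conjecture; the paper's route stays inside the $\lambda$-deformation formalism, which meshes better with the closing remark that the statement likely only holds in the derived setting. Two caveats on your version: first, you restrict the de Rham side to semisimple flat connections, whereas the conjecture as written carves out a ``nice'' substack only on the Dolbeault side and asks for maps out of the full $\sM^{top}(X_{dR})$ and $\sM_1^{top}(X_{dR})$ --- given the counterexample the paper invokes in \cref{subsec:NonAbHodge} some restriction is surely needed, but you are then addressing a modified statement; second, your claim that simplicial compatibility ``follows'' from \cref{rmk:smplcl-stacks} is too quick, since that remark says the higher levels carry little information in the $1$-stack setting, not that compatibility in degrees $0$ and $1$ is automatic --- the degeneracy-map check still requires knowing that your family-level correspondence is strictly (not just up to homotopy) unital, which is part of the unproved relative statement.
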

\begin{proof}[Proof idea]
$\sM_\bullet^{top}(X_{dR})$
and $\sM_\bullet^{top}(X_{Dol})$ are simplicial substacks of
$\sM_\bullet(X_{Hod}) \to \bA^1_\bC$: namely, the (simplicial degree-wise)
fibres above $0, 1 \in \bA^1_\bC$ respectively. We can then try to apply the
``preferred sections'' approach as discussed in \cite[\S 4]{NonAbHodgeFilt}
for every simplicial degree. Assuming we are able to accomplish this,
we must, of course, then examine how the maps
$\sM_n^{top}(X_{dR}) \to \sM_n^{top, nice}(X_{Dol})$ interact with the
simplicial maps.
\end{proof}

 Of course, we must also make precise what we mean by
``categorical equivalence'' and then check that such a condition is satisfied.

\begin{rmk}
According to a conversation we had with Carlos Simpson, it is likely that the
above result will only be true in the derived setting.
\end{rmk}

\printbibliography

\end{document}